\documentclass[12pt]{amsart}

\usepackage{amssymb}
\usepackage{latexsym,color}
\usepackage{tikz}
\usetikzlibrary{arrows,shapes,trees}

\usepackage{verbatim,euscript}
\usepackage{fullpage,array}

\usepackage[colorlinks=true,linkcolor=blue,urlcolor=my_color,citecolor=magenta]{hyperref}

\definecolor{my_color}{rgb}{0,0.5,0.5}
\definecolor{MIXT}{rgb}{0.4,0.3,0.6}
\definecolor{mixt}{rgb}{0.5,0.3,0.2}
\definecolor{sin}{rgb}{0,0.5,0.5}
\definecolor{darkblue}{rgb}{0,0.1,0.8}
\definecolor{redi}{rgb}{0.5,0,0.4}

\numberwithin{equation}{section}

\newtheorem{thm}{Theorem}[section]
\newtheorem{prop}[thm]{Proposition}
\newtheorem{lm}[thm]{Lemma}
\newtheorem{cl}[thm]{Corollary}
\newtheorem{conj}[thm]{Conjecture}
\newtheorem{qtn}{Question}
\newtheorem{prob}{Problem}

\theoremstyle{remark}
\newtheorem{rmk}[thm]{Remark}

\theoremstyle{definition}
\newtheorem{df}{Definition}
\newtheorem{ex}[thm]{Example}

\newcommand {\eus}{\EuScript}

\newcommand {\be}{{\mathfrak b}}

\newcommand {\g}{{\mathfrak g}}
\newcommand {\h}{{\mathfrak h}}

\newcommand {\el}{{\mathfrak l}}

\newcommand {\p}{{\mathfrak p}}
\newcommand {\q}{{\mathfrak q}}
\newcommand {\rr}{{\mathfrak r}}
\newcommand {\es}{{\mathfrak s}}
\newcommand {\te}{{\mathfrak t}}

\newcommand {\z}{{\mathfrak z}}

\newcommand {\fX}{{\mathfrak X}}

\newcommand {\gl}{\mathfrak{gl}}
\newcommand {\sel}{\mathfrak{sl}}

\newcommand {\gln}{\mathfrak{gl}_n}

\newcommand {\sln}{\mathfrak{sl}_n}

\newcommand {\glv}{\mathfrak {gl}(\VV)}

\newcommand {\spn}{\mathfrak {sp}_{2n}}

\newcommand {\gC}{{\eus C}}

\newcommand {\gK}{{\eus K}}
\newcommand {\gM}{{\eus M}}

\newcommand {\gS}{{\eus S}}
\newcommand {\gZ}{{\eus Z}}
\newcommand {\gV}{{\eus V}}

\newcommand {\ap}{\alpha}

\newcommand {\lb}{\lambda}

\newcommand {\cb}{{\mathcal B}}

\newcommand {\N}{{\mathcal N}}
\newcommand {\co}{{\mathcal O}}

\newcommand {\cs}{{\mathcal S}}

\newcommand {\md}{/\!\!/}

\newcommand {\ad}{{\mathrm{ad\,}}}

\newcommand {\codim}{{\mathrm{codim}}}

\newcommand {\Hom}{{\mathsf{Hom}}}
\newcommand {\ind}{{\mathrm{ind\,}}}
\newcommand {\Lie}{{\mathrm{Lie\,}}}
\newcommand {\Ker}{{\mathsf{Ker}}}
\newcommand {\Ima}{{\mathsf{Im}}}

\newcommand {\rk}{{\mathrm{rk\,}}}
\newcommand {\spe}{{\mathrm{Spec\,}}}

\newcommand {\trdeg}{{\mathrm{trdeg\,}}}

\newcommand {\tri}{{\mathfrak{sl}}_2}

\newcommand {\GR}[2]{{\textrm{{\color{redi}\bf #1}}}_{#2}}

\newcommand {\BA}{{\mathbb A}}

\newcommand {\BN}{{\mathbb N}}

\newcommand {\BP}{{\mathbb P}}

\newcommand {\VV}{{\mathbb V}}
\newcommand {\UU}{{\mathbb U}}
\newcommand {\WW}{{\mathbb W}}

\newcommand {\blb}{\boldsymbol{\lb}}

\newcommand {\ov}{\overline}

\newcommand {\beq}{\begin{equation}}
\newcommand {\eeq}{\end{equation}}

\renewcommand{\le}{\leqslant}
\renewcommand{\ge}{\geqslant}
\renewcommand{\lg}{\langle}
\newcommand{\rg}{\rangle}
\newcommand {\bbk}{\Bbbk}
\newcommand{\wrt}{{w.r.t.}}

\begin{document}
\setlength{\parskip}{2pt plus 3pt minus 0pt}
\hfill {\scriptsize May 27, 2026} 
\vskip1ex 

\title[Coadjoint orbits and the index of subalgebras]{The index of subalgebras and strange coadjoint orbits}
\author[D.\,Panyushev]{Dmitri I. Panyushev}
\address{Independent University of Moscow, 119002 Moscow, Russia}
\keywords{Frobenius algebra, parabolic complement, sheet, spherical orbit, nilpotent cone}
\subjclass[2020]{17B08, 17B20, 14L30, 17B63, 22E46}
\begin{abstract}
For an algebraic group $Q$ with $\Lie Q=\q$, we develop a method for estimating the index of a 
subalgebra $\h\subset\q$ via the use of coadjoint $Q$-orbits in $\q^*$. Let $\q^\xi$ denote the stabiliser of
$\xi\in\q^*$. In the special case when $\q^\xi\oplus\h=\q$, our estimate implies that $\ind\h=0$. Using our 
theory, we also answer a question of Duflo. An orbit $Q{\cdot}\eta\subset\q^*$ is said to be {\it strange}, if 
$\q^\eta\oplus\h=\q$ for some $\h$. In the second part of the paper, we study strange orbits for a 
semisimple algebra $\g$. It is shown that an orbit $\co\subset\g\simeq\g^*$ is strange whenever the 
complexity of $\co$ is at most 1. Furthermore, if $\cs\subset\g$ is a sheet containing a strange nilpotent 
orbit, then all orbits in $\cs$ are strange. We also show that strange orbits in $\sln$ are not as sparse, as 
one might expect, and discuss some conjectures on strange orbits.
\end{abstract}
\maketitle

\section{Introduction}     
\label{sect:intro}
Let $Q$ be an algebraic group with $\Lie Q=\q$. The {\it index\/} of $\q$, denoted $\ind\q$, is an important 
numerical invariant related to the coadjoint representation of $\q$. Let $\cb_\xi$ be the {\it Kirillov form\/} 
associated with $\xi\in\q^*$. It is an alternate bilinear form on $\q$ that is defined by 
$\cb_\xi(x,y)=\xi([x,y])$. Let $\q^\xi$ be the kernel of $\cb_\xi$. It is also the stabiliser of $\xi$ \wrt\ the 
coadjoint representation of $\q$ and $\ind\q=\min_{\xi\in\q^*}\dim\q^\xi$. The set of regular elements of 
$\q^*$ is $\q_{\sf reg}^*=\{\xi\mid \dim\q^\xi=\ind\q\}$. If $\q$ is reductive, then $\ind\q=\rk\q$. But
it is a non-trivial task to compute the index of non-reductive Lie algebras; especially, 
because the behaviour of index for a pair $\h\subset\q$ is unpredictable.

In Section~\ref{sect:estimate-index}, we provide a method for estimating the index of subalgebras of $\q$ 
using coadjoint $Q$-orbits in $\q^*$.  Given $\xi\in\q^*$ and a subalgebra $\h\subset\q$, set 
$a=\codim (\q^\xi+\h)$ and $b=\dim (\q^\xi\cap\h)$. Under natural mild assumptions, we prove that 
$\ind\h\le a+b$. In particular, if $a+b\le 1$, then $\ind\h=a+b$. The most interesting case occurs if
$a=b=0$, i.e., $\q^\xi\oplus\h=\q$. Then $\ind\h=0$, $\dim\h=\dim Q{\cdot}\xi$ and the group $H$ has a 
dense orbit in $Q{\cdot}\xi$. 

Using this theory, we answer a question of M.\,Duflo. Let $G$ be a semisimple group with 
$\Lie G=\g$ and $\h=\Lie H$ a subalgebra of $\g$ that does not contain proper ideals of $\g$. Let 
$p:\g\simeq\g^*\to \h^*$ be the natural projection. In Section~\ref{sect:quest}, we prove that 
$p(G{\cdot}x)$ is dense in $\h^*$ for any $x\in\g_{\sf reg}$. Therefore, 
$p(G{\cdot}x)\cap\h^*_{\sf reg}\ne\varnothing$, which is the affirmative answer to Duflo's question. The 
proof also exploit some invariant-theoretic properties of the adjoint representation of $G$. In particular,
we prove an auxiliary result, which is of interest in its own right. Namely, if $H\subset G$ is 
reductive, then the $H$-action on the nilpotent cone $\N\subset\g$ is stable and locally free,
see Appendix~\ref{app}.

Following M.\,Ra\"\i s, a coadjoint orbit $Q{\cdot}\xi\subset\q^*$ is said to be {\it strange}, if there is a 
subalgebra $\h$ such that $\q^\xi\oplus\h=\q$. Then $\dim\h=\dim Q{\cdot}\xi$ and $\h$ must be 
Frobenius, i.e., $\ind\h=0$. We also say that $\h$ is a {\it complementary subalgebra\/} for $Q{\cdot}\xi$.
In the main part of this article, we study strange orbits in semisimple Lie 
algebras. Long ago I proved that the spherical $G$-orbits in $\g$ are strange, and 
here much more results are presented. It is shown that the nilpotent orbit of complexity 1 is strange
(such orbit is only present in $\sln$, $n\ge 3$).
We also prove that if a nilpotent orbit in a sheet of $\g$ is strange,
then all orbits in this sheet are strange (cf. Section~\ref{sect:strange}).

The very name suggested by Ra\"\i s hints that ``strange'' orbits are rare and enigmatic. But being truly 
mysterious, these orbits are not uncommon in $\sln$. Recall that the nilpotent orbits in $\sln$ are 
parametrised by the partitions of $n$. If $\blb$ is a partition, then $\co(\blb)$ denotes the corresponding 
orbit. In Sections~\ref{sect:str-glv} and \ref{sect:two-col}, we provide several classes of strange orbits: 
\begin{enumerate}
\item if $\tilde e\in\sln$ is regular nilpotent, then the $SL_n$-orbit of $\tilde e^k$ is strange for any 
$k\ge 1$;
\item if $\blb$ contains only two parts and $\co(\blb)$ is strange, then either $\co(\blb)=SL_n{\cdot}\tilde e^2$ or $\blb=(m{+}1,m{-}1)$ for $n=2m$ and $\blb=(m{+}2,m{-}1)$ for $n=2m{+}1$;
\item  we point out three series of strange orbits such that $\blb$ has three parts. These are 
$\co(m{+}2,m{-}1,m{-}1)$, $\co(m{+}3,m{-}1,m{-}1)$, and $\co(m{+}3,m,m{-}1)$.
\end{enumerate}
An interesting  feature of partitions given in (2),(3) is that the corresponding strange orbits admit a 
{\bf parabolic} complementary subalgebra. This also happens in many other cases, e.g. $\co(3,2,1)$. 
These results show that all nilpotent orbits in $\sel_5$ are strange (hence all orbits are strange), while 
$\co(5,1)$ is the only non-strange nilpotent orbit in $\sel_6$. Having done a lot of computations for $\sln$ 
(not all of them are presented in the text), I got an impression that the presence of strange non-spherical orbits in 
$\sln$ is closely related to the fact that there are many Frobenius parabolic subalgebras. On the other hand, if 
$\be$ is a Borel subalgebra of a simple Lie algebra $\g$ and $\ind\be=0$, then $\be$ is the only 
Frobenius parabolic subalgebra. My conjecture is that in this case only spherical orbits are strange. We 
discuss this and other related conjectures in Section~\ref{sect:problems}.

{\it\bfseries Some notation.} The ground field $\bbk$ is algebraically closed and $\text{char\,}\bbk=0$. 
Algebraic groups are denoted by capital Latin letters and their Lie algebras are denoted by the corresponding small Gothic letters, e.g.
$\be=\Lie B$.
If an algebraic group $Q$ acts on an algebraic variety $X$, then this is denoted by $(Q:X)$.
For a given action $\ap: Q\times X \to X$, we usually write $s{\cdot}x$ in place of $\ap(s,x)$ for
$s\in Q$ and $x\in X$. An action $(Q{:}X)$ is said to be {\it locally free}, if there is a dense open subset 
$\Omega\subset X$ such that all the stabilisers $Q^x$, $x\in\Omega$, are finite. Then we also say that 
generic stabilisers for $(Q{:}X)$ are finite. Write $S^0$ for the identity component of an algebraic group $S$.

\section{An estimate of the index of a subalgebra}     
\label{sect:estimate-index}
 
Let $Q$ be a connected algebraic group with $\q=\Lie Q$. For $\xi\in\q^*$, the $Q$-orbit of $\xi$ with 
respect to the coadjoint representation of $Q$ is denoted by $Q{\cdot}\xi$. Then $Q^\xi$ is the 
{\it stabiliser\/} of $\xi$ in $Q$ and $\q^\xi=\Lie Q^\xi$. Let $\q^*_{\sf reg}$
denote the set of {\it regular elements} of $\q^*$, i.e.,
\[
       \q^*_{\sf reg}=\{\xi\in\q^*\mid \dim\q^\xi \text{ is minimal}\}=\{\xi\in\q^*\mid \dim Q{\cdot}\xi 
       \text{ is maximal}\}.
\]
Hence $\q^*_{\sf reg}$ is a dense open subset of $\q^*$. By definition, the {\it index\/} of $\q$, $\ind\q$, 
equals $\dim\q^\xi$ for some (any) $\xi\in\q^{\sf reg}$. Let $\bbk(\q^*)^Q$ denote the field of $Q$-invariant 
rational functions on $\q^*$. By the Rosenlicht theorem~\cite[\S\,2.3]{vp}, 
$\trdeg \bbk(\q^*)^Q=\dim\q- \max_{\xi\in\q^*} \dim Q{\cdot}\xi$. Hence one also has
$\ind\q=\trdeg \bbk(\q^*)^Q$. Since the coadjoint orbits are even-dimensional, one has
$\dim\q\equiv \ind\q\pmod 2$.

Let $H$ be a connected algebraic subgroup of $Q$ with $\h=\Lie H$. We assume that $\h\ne \{0\}$ and 
consider the projection $p:\q^*\to \h^*$. 
If $M$ is any subset of $\q$, then $M^\perp$ stands for its {\it annihilator\/} in $\q^*$, i.e., 
$M^\perp=\{\eta\in\q^*\mid \eta(m)=0 \text{ for all } m\in M\}$.  The linear span of $M$ is denoted by 
$\lg M\rg$.
Recall that $\q{\cdot}\xi=\mathsf T_\xi(Q{\cdot}\xi)\subset\q^*$ is the tangent space of the orbit 
$Q{\cdot}\xi$ at $\xi$ and $\q{\cdot}\xi=(\q^\xi)^\perp$.

\begin{lm}   \label{lm:1}
For any $\h\subset\q$ and $\xi\in\q^*$, we have
\begin{itemize}
\item[\sf (i)] \  $\dim(\q^\xi+\h)^\perp=\dim \q{\cdot}\xi-\dim\h{\cdot}\xi$;
\item[\sf (ii)] \ $\dim p(\q{\cdot}\xi)=\dim \h{\cdot}\xi=\dim\h-\dim (\h\cap\q^\xi)$.
\end{itemize}
\end{lm}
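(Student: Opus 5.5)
The plan is pure linear algebra with the Kirillov form $\cb_\xi$. The only facts used are $\q{\cdot}\xi=(\q^\xi)^\perp$ and the definition of the coadjoint action, $(x{\cdot}\xi)(y)=-\xi([x,y])=\cb_\xi(y,x)$. In particular $\h{\cdot}\xi=\{x{\cdot}\xi\mid x\in\h\}$ is the image of $\h$ under the linear map $\q\to\q^*$, $x\mapsto x{\cdot}\xi$, whose kernel is $\q^\xi$.

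For the second equality in (ii), restrict this map to $\h$. Its kernel is $\h\cap\q^\xi$, so $\dim\h{\cdot}\xi=\dim\h-\dim(\h\cap\q^\xi)$ by rank–nullity.

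For the first equality in (ii), note that $p(\q{\cdot}\xi)$ is the image of $\q$ under $x\mapsto (x{\cdot}\xi)|_\h$. The kernel of this map is $\{x\in\q\mid \xi([x,\h])=0\}$, the $\cb_\xi$-orthogonal complement of $\h$ in $\q$, which I denote $\h^{\perp_\xi}$. Since $\cb_\xi$ has radical $\q^\xi$, one has
\[
\dim\h^{\perp_\xi}=\dim\q-\dim\h+\dim(\h\cap\q^\xi).
\]
To see this, $\h^{\perp_\xi}$ is the preimage of the annihilator of $\h$ under $\q\to\q^*$, $x\mapsto\cb_\xi(x,\cdot)$. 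Equivalently, it is the kernel of $\q\to\h^*$, whose image has dimension $\dim\h-\dim(\h\cap\q^\xi)$: the image is the dual of $\h/(\h\cap\q^\xi)$, by the symmetry of the rank of a bilinear pairing. Hence $\dim p(\q{\cdot}\xi)=\dim\h-\dim(\h\cap\q^\xi)$, which matches $\dim\h{\cdot}\xi$. The one point needing care is this symmetric-rank step: the rank of the pairing $\q\times\h\to\bbk$ computed from the $\h$ side equals $\dim\h$ minus its left kernel in $\h$, and that kernel is $\h\cap\q^\xi$.

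For (i), compute directly:
\[
\dim(\q^\xi+\h)^\perp=\dim\q-\dim\q^\xi-\dim\h+\dim(\q^\xi\cap\h)=\dim\q{\cdot}\xi-\dim\h{\cdot}\xi,
\]
using $\dim\q{\cdot}\xi=\dim\q-\dim\q^\xi$ and the formula just proved. No step is a real obstacle; the only place requiring attention is the rank identity in (ii).
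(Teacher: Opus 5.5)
Your proof is correct, but it runs in the opposite direction from the paper's.

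The paper proves (i) first, by exactly the dimension count you give at the end. It takes $\dim\q{\cdot}\xi=\dim\q-\dim\q^\xi$ and $\dim\h{\cdot}\xi=\dim\h-\dim(\h\cap\q^\xi)$ as evident. It then derives (ii) from (i): the kernel of $p$ restricted to $\q{\cdot}\xi$ is
\[
\q{\cdot}\xi\cap\h^\perp=(\q^\xi)^\perp\cap\h^\perp=(\q^\xi+\h)^\perp,
\]
so $\dim p(\q{\cdot}\xi)=\dim\q{\cdot}\xi-\dim(\q^\xi+\h)^\perp=\dim\h{\cdot}\xi$ by (i).

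You instead prove the first equality of (ii) directly, by symmetry of rank for the pairing $\q\times\h\to\bbk$ induced by $\cb_\xi$, and then read off (i). In effect, the map $\q\to\h^*$, $x\mapsto (x{\cdot}\xi)\vert_\h$, is minus the transpose of $\h\to\q^*$, $y\mapsto y{\cdot}\xi$. So both have rank $\dim\h{\cdot}\xi$.

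Both arguments are the same piece of linear duality in different form. The paper's annihilator identity is a one-liner once (i) is available. Your version makes the role of the skew-symmetry of $\cb_\xi$ explicit and does not need (i) as input.

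Your detour through $\dim\h^{\perp_\xi}$ is harmless but unnecessary: you only use the rank of $\q\to\h^*$, which the symmetric-rank step gives immediately.
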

\begin{proof}
{\sf (i)}  $\dim(\q^\xi+\h)^\perp=\dim\q-\dim(\q^\xi+\h)=
\dim\q-\dim\q^\xi-\dim\h+\dim(\q^\xi\cap\h)$.
\\
{\sf (ii)}  We have $\dim p(\q{\cdot}\xi)=\dim(\q{\cdot}\xi)-\dim (\q{\cdot}\xi\cap\h^\perp)$. Since
$\q{\cdot}\xi\cap\h^\perp=(\q^\xi+\h)^\perp$, the assertion follows from {\sf (i)}.
\end{proof}

Set $\co=Q{\cdot}\xi$.  For $\eta\in\q^*$, we set $\bar\eta=p(\eta)=\eta\vert_{\h}$.
\begin{lm}    \label{lm:2}
{\sf 1.} If the action $(H:\co)$ is non-trivial, then $\Omega:=\{\eta\in\co\mid \bar\eta\ne 0\}$ is a dense 
open subset of $\co$. \\
{\sf 2.}  Moreover, if there is $\xi\in\co$ such that $\q^\xi+\h=\q$ and $\q^\xi\ne \q$, then $[\h,\h]\ne 0$ 
and $\bar\xi\ne 0$.
\end{lm}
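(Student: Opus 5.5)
The set $\Omega$ is the complement in $\co$ of $\co\cap\h^\perp$, and $\h^\perp$ is a closed linear subspace of $\q^*$. Hence $\Omega$ is open in $\co$. Since $Q$ is connected, $\co$ is irreducible, so it suffices to show that $\Omega\neq\varnothing$.

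For part 1, suppose $\Omega=\varnothing$, i.e.\ $\co\subset\h^\perp$. Then for every $\eta\in\co$, $x\in\h$ and $y\in\q$ we will show that $x{\cdot}\eta=0$. Differentiating the inclusion $\co\subset\h^\perp$ along $\co$ gives $\q{\cdot}\eta\subset\h^\perp$ for all $\eta\in\co$. This means $\eta([y,x])=0$ for all $y\in\q$ and $x\in\h$, so $\h\subset\q^\eta$ for every $\eta\in\co$. Thus $\h$ acts trivially on $\co$, and since $H$ is connected, $H$ acts trivially on $\co$. This contradicts the hypothesis, so $\Omega\ne\varnothing$ and is therefore dense.

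For part 2, suppose $\q^\xi+\h=\q$ and $\q^\xi\ne\q$. First I check $\bar\xi\ne0$. Suppose instead $\xi\in\h^\perp$. Since $\q=\q^\xi+\h$ and $\xi([\q^\xi,\q])=0$, it remains to check $\xi([\h,\h])$. Writing any $u,v\in\q$ as $u=u_1+u_2$ and $v=v_1+v_2$ with $u_1,v_1\in\q^\xi$ and $u_2,v_2\in\h$, we get $\xi([u,v])=\xi([u_2,v_2])$. So $\cb_\xi$ is determined by its restriction to $\h$, and $\q^\xi\ne\q$ forces $\xi([\h,\h])\ne0$. This already yields $[\h,\h]\ne0$. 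Moreover, $[\h,\h]\subset\h$, so $\xi(\h)\ne0$, i.e.\ $\bar\xi\ne0$. This avoids needing the assumption $\xi\in\h^\perp$ at all: the argument gives both claims directly.

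The only delicate point is the tangent-space argument in part 1. It uses that the orbit map is smooth, so that $\mathsf T_\eta\co=\q{\cdot}\eta$ in characteristic zero, and this was recalled before Lemma~\ref{lm:1}. The rest is bookkeeping.
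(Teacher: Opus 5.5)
Your proof is correct and follows the paper's argument. Part 1 is the same tangent-space argument: $\co\subset\h^\perp$ forces $\q{\cdot}\eta\subset\h^\perp$, i.e.\ $\h\subset\q^\eta$. In part 2, your identity $\xi([u,v])=\xi([u_2,v_2])$ is the bilinear-form version of the paper's decomposition $[\q,\q]=[\q,\q^\xi]+[\h,\h]$. Two small points: the opening ``suppose $\xi\in\h^\perp$'' in part 2 is superfluous, and the ``delicate point'' is not delicate, since you only need $\q{\cdot}\eta\subset\lg\co\rg\subset\h^\perp$, which holds because the span of $\co$ is $Q$-stable.
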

\begin{proof}
{\sf 1.}  It is clear that  $\Omega$ is an open subset of $\co$. If $\bar\eta=0$ for all $\eta\in\co$, 
then $\lg Q{\cdot}\eta\rg \subset\h^\perp$. Hence 
$\q{\cdot}\eta=\mathsf T_\eta(Q{\cdot}\eta)\subset\h^\perp$. Then $\q^\eta\supset\h$ and the $H$-action 
on $\co$ is trivial.

{\sf 2.} If $\xi\vert_{[\q,\q]}=0$, then $\q^\xi=q$. Hence $\xi\vert_{[\q,\q]}\ne 0$. Next, 
$[\q,\q]=[\q,\q^\xi +\h]=[\q,\q^\xi]+ [\h,\h]$. Since $\xi\vert_{[\q,\q^\xi]}=0$, we obtain
$\xi\vert_{[\h,\h]}\ne 0$. Hence $[\h,\h]\ne 0$ and $\bar\xi:=\xi\vert_\h\ne 0$.
\end{proof}

For $\eta\in\co$, set $a_\eta=\codim (\q^\eta+\h)$ and $b_\eta=\dim (\q^\eta\cap\h)=\dim\h^\eta$. By 
Lemma~\ref{lm:1}{\sf (i)}, we have $a_\eta=\dim\co-\dim\h{\cdot}\eta$. Therefore, there is a 
dense open subset $\Omega_1\subset\Omega \subset\co$ such that the functions $\eta\mapsto a_\eta$ and
$\eta\mapsto b_\eta$ are constant on $\Omega_1$ and $\bar\eta\ne 0$ for all $\eta\in\Omega_1$.
Moreover, if $\eta\in\Omega_1$, then
\begin{gather*}
 a:=a_\eta=\min\{a_\xi\mid \xi\in\co\}=\min_{\xi\in\co} \codim (\q^\xi+\h)  \\ 
  b:=b_\eta=\min\{b_\xi\mid \xi\in\co\}=\min_{\xi\in\co} \dim (\q^\xi\cap\h) . 
\end{gather*}
Using this notation, we state the main result of this section.

\begin{thm}         \label{thm:main1}
Let $\h$ be a proper subalgebra of $\q$, with non-trivial $H$-action on $\co=Q{\cdot}\xi$. Then 
$\ind\h\le a+b$.
\end{thm}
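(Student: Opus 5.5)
The plan is to bound $\ind\h$ directly by the dimension of a single stabiliser $\h^{\bar\eta}$, where $\bar\eta=p(\eta)\in\h^*$ and $\eta\in\co$ is chosen generic. Since $\ind\h=\min_{\zeta\in\h^*}\dim\h^\zeta$, it suffices to find one $\zeta\in\h^*$ with $\dim\h^\zeta\le a+b$. We take $\zeta=\bar\eta$ for $\eta\in\Omega_1$. This is possible because the hypothesis that $H$ acts non-trivially on $\co$ makes $\Omega$, and hence $\Omega_1$, a dense open subset of $\co$ by Lemma~\ref{lm:2}. On $\Omega_1$ both quantities are minimal simultaneously: $\codim(\q^\eta+\h)=a$ and $\dim(\q^\eta\cap\h)=b$.

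The key step is a linear-algebra description of the stabiliser of $\bar\eta$ in $\h$. For $x\in\h$ we have $x\in\h^{\bar\eta}$ if and only if $\eta([x,y])=0$ for all $y\in\h$. This says exactly that the covector $x{\cdot}\eta\in\q{\cdot}\eta$ vanishes on $\h$, i.e. $x{\cdot}\eta\in\q{\cdot}\eta\cap\h^\perp$. As in the proof of Lemma~\ref{lm:1}, we have $\q{\cdot}\eta\cap\h^\perp=(\q^\eta+\h)^\perp$, and this space has dimension $a$.

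Now consider the linear map $\varphi\colon \h\to\q{\cdot}\eta$ given by $x\mapsto x{\cdot}\eta$. Its kernel is $\h\cap\q^\eta=\h^\eta$, which has dimension $b$. The previous paragraph shows that $\h^{\bar\eta}=\varphi^{-1}\bigl((\q^\eta+\h)^\perp\bigr)$. Therefore
\[
\dim\h^{\bar\eta}\le\dim\Ker\varphi+\dim(\q^\eta+\h)^\perp=b+a .
\]
Combining this with $\ind\h\le\dim\h^{\bar\eta}$ gives $\ind\h\le a+b$.

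I expect no serious obstacle. The one point needing care is that the minima defining $a$ and $b$ might a priori be attained at different points of $\co$. This is exactly why we choose $\eta$ in the common open set $\Omega_1$, where both minima are attained at once. The non-triviality and properness assumptions are only used to guarantee that this set exists and that $\bar\eta\ne0$.
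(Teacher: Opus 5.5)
Your proof is correct, but it takes a different route from the paper's. The paper argues geometrically. By Lemma~\ref{lm:1}, $\dim p(\co)=\dim H{\cdot}\xi=\dim\h-b$, and the generic fibres of $\hat p=p\vert_{\co}$ have dimension $a$. The fibre of $H{\cdot}\xi\to H{\cdot}\bar\xi$ over $\bar\xi$ lies in $\hat p^{-1}(\bar\xi)$, so $\dim H{\cdot}\bar\xi\ge \dim H{\cdot}\xi-a=\dim\h-a-b$.

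You instead bound $\h^{\bar\eta}$ pointwise by linear algebra, and this is the infinitesimal form of the paper's fibre estimate. Indeed $\h{\cdot}\bar\eta=p(\h{\cdot}\eta)$, and the kernel of $p$ on $\h{\cdot}\eta$ is $\h{\cdot}\eta\cap\h^\perp\subset(\q^\eta+\h)^\perp$.

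Your version gives a cleaner and slightly stronger statement. The bound $\dim\h^{\bar\eta}\le a_\eta+b_\eta$ holds for every $\eta\in\q^*$, and you never need the dimension of generic fibres. Neither $\bar\eta\ne 0$ nor the non-triviality hypothesis is actually used: if $H$ acts trivially on $\co$, then $b=\dim\h$ and the claim is trivial. Your worry about the two minima is also unnecessary, since $a_\eta-b_\eta=\dim\co-\dim\h$ is constant on $\co$, so both minima are attained at the same points.

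The paper's geometric formulation, in turn, records extra information: $\codim_{\h^*}p(\co)=b$, and the generic fibres of $\hat p$ are $a$-dimensional. That is what Corollary~\ref{cor:3-cases} (dominance of $\hat p$, finiteness of generic fibres) and Theorem~\ref{thm:duflo} rely on.
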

\begin{proof}
Without loss of generality, we may assume that $\xi\in\Omega_1$. Hence $\bar\xi\ne 0$, $a=a_\xi$, and $b=b_\xi$. Since 
$p$ is $H$-equivariant, we have $p(H{\cdot}\xi)=H{\cdot}\bar\xi$. Note that $H{\cdot}\xi$ is just an 
$H$-orbit in $\q^*$, whereas $H{\cdot}\bar\xi$ is a coadjoint orbit in $\h^*$. It follows from 
Lemma~\ref{lm:1}{\sf (ii)} that 
\[
     \dim p(\co)=\dim H{\cdot}\xi=\dim\h-b .
\] 
In other words, $\codim_{\h^*} p(\co)=b$. By 
Lemma~\ref{lm:1}{\sf (i)}, we also have $\dim\co=\dim H{\cdot}\xi +a=\dim p(\co)+a$. Hence the dimension 
of generic fibres of $\hat p=p\vert_{\co}$  equals $a$. This implies that
$\dim H{\cdot}\xi-\dim H{\cdot}\bar\xi\le a$. Then
$\dim H{\cdot}\bar\xi\ge \dim H{\cdot}\xi-a= \dim\h-a-b$. Thus, $\ind \h\le a+b$.
\end{proof}

This immediately implies the following special cases of this theorem.
\begin{cl}   \label{cor:3-cases}
\leavevmode\par
\begin{itemize}
\item[\sf (i)] \ If\/ $\h\cap\q^\xi=\{0\}$ for some $\xi\in\co$, then $\hat p:\co\to\h^*$ is dominant, 
$\ind\h\le a$, and the generic fibres of $\hat p$ are of dimension $a$.
\item[\sf (ii)] \  If\/ $\h+\q^\xi=\q$ \ for some $\xi\in\co$, then $\dim H{\cdot}\xi=\dim\co$, $\ind\h\le b$, and
the generic fibres of $\hat p$ are finite.
\item[\sf (iii)] \  If\/ $a+b\le 1$, then $\ind\h=a+b$.
\end{itemize}
\end{cl}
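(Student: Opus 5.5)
The plan is to read off all three items from the proof of Theorem~\ref{thm:main1}, working at a point $\xi\in\Omega_1$. For such $\xi$ we have the relations $\dim p(\co)=\dim\h-b$ and $\dim\co=\dim p(\co)+a$. The generic fibres of $\hat p$ therefore have dimension $a$, and $\ind\h\le a+b$.

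\textbf{Reducing the hypotheses to $b=0$ or $a=0$.} Since $a$ and $b$ are defined as minima over $\co$, the hypothesis of {\sf (i)} gives $b=0$, and the hypothesis of {\sf (ii)} gives $a=0$. For {\sf (ii)} the non-triviality of the $H$-action is automatic when $\q^\xi\ne\q$, by Lemma~\ref{lm:2}(2). In the degenerate case $\q^\xi=\q$ we have $\co$ a point and everything is trivial; I would either exclude this case or note that $\h+\q^\xi=\q$ then holds vacuously. The $H$-action is likewise non-trivial in {\sf (i)}, because $\h\cap\q^\xi=0$ and $\h\ne0$.

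\textbf{Item (i).} From $b=0$ we get $\dim p(\co)=\dim\h=\dim\h^*$. Since $\co$ is irreducible, $\overline{p(\co)}=\h^*$, so $\hat p$ is dominant. The generic fibres have dimension $\dim\co-\dim\h^*=a$, and $\ind\h\le a$.

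\textbf{Item (ii).} From $a=0$, Lemma~\ref{lm:1}(i) gives $\dim\h{\cdot}\xi=\dim\q{\cdot}\xi$, so $\dim H{\cdot}\xi=\dim\co$ for generic $\xi$. The generic fibre dimension is $a=0$, so the generic fibres are finite, and $\ind\h\le b$.

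\textbf{Item (iii).} This is the only step needing an extra argument: we must show the reverse inequality $\ind\h\ge a+b$. From the computation $\dim\h-\dim\h^\xi-\dim\q+\dim\q^\xi=-a$ we get the congruence $a+b\equiv \dim\q-\dim\q^\xi+\dim\h\pmod 2$.
\begin{itemize}
\item Coadjoint orbits of $Q$ are even-dimensional, so $\dim\q-\dim\q^\xi=\dim\co$ is even. Hence $a+b\equiv\dim\h\equiv\ind\h\pmod 2$.
\item Combined with $\ind\h\le a+b\le1$, the parity forces $\ind\h=a+b$.
\end{itemize}
The main (mild) obstacle is keeping track of the non-triviality hypothesis in each case. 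In {\sf (iii)} it is assumed implicitly via Theorem~\ref{thm:main1}, or it follows as in Lemma~\ref{lm:2}.
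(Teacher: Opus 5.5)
Your proposal is correct and follows the paper's route. The paper presents these three items as immediate consequences of the dimension count in the proof of Theorem~\ref{thm:main1}, namely $\dim p(\co)=\dim\h-b$ and generic fibre dimension $a$, and records the same parity argument for {\sf (iii)} in Remark~\ref{rem:previous}. One small point: in {\sf (ii)}, Lemma~\ref{lm:1}{\sf (i)} gives $\dim H{\cdot}\xi=\dim\co$ for the given $\xi$ itself, since $a_\xi=0$ there, not only for generic $\xi$.
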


\begin{rmk}         \label{rem:previous}
1. If $a+b=0$ (i.e., $\h\oplus\q^\xi=\q$), then $H{\cdot}\bar\xi$ is dense in $\h^*$ and $\ind\h=0$. This has 
been proved in \cite[Prop.\,5]{GS}, i.e., Theorem~\ref{thm:main1} is a generalisation of that result. This also 
generalises some observations of~\cite[6.4]{aif99}, where a similar method is applied to spherical nilpotent 
orbits in a reductive Lie algebra $\g$ and $\h=\be$, a Borel subalgebra of $\g$.

2. If $a+b=1$, then $\dim\h$ is odd and  $\ind\h=1$ for the parity reason. There are two possibilities 
for this. They correspond to either Corollary~\ref{cor:3-cases}{\sf (i)} with $a=1$, or Corollary~\ref{cor:3-cases}{\sf (ii)} with $b=1$.
\end{rmk}

As usual, we say that a Lie algebra $\h$ is {\it Frobenius}, if $\ind \h=0$.
\begin{ex}     \label{ex:g-simple}
If $\q$ is a simple Lie algebra, then $\lg Q{\cdot}\xi\rg=\q^*$ for all nonzero $\xi\in\q^*\simeq \q$. 
Therefore, Lemma~\ref{lm:2} and subsequent results apply to all proper subalgebras $\h\subset\q$ 
and all no-trivial $Q$-orbits.
\end{ex}

Theorem~\ref{thm:main1} is a tool for estimating $\ind\h$ via the use of coadjoint orbits of $Q$. The most 
interesting case occurs if $\h\oplus\q^\xi=\q$. Then $\h$ is Frobenius and we elaborate on this case in 
Section~\ref{sect:strange}.

By Vinberg's inequality~\cite[Prop.\,1.6 \& Cor.\,1.7]{p03},  
one has $\ind\q^\xi\ge\ind\q$ for any 
$\xi\in\q^*$. On the other hand, the celebrated "Elashvili conjecture" asserts that if $\q$ is reductive, then 
$\ind\q^\xi=\ind\q$  for all $\xi$, see~\cite{cm10} for a final chord in the proof and historical account. But it 
can happen that $\ind\q^\xi > \ind\q$ for some non-reductive $\q$ and 
$\xi\in\q^*$~\cite[Example\,1.1]{py06}. Here one may ask the following natural

\begin{qtn}   \label{qu:1}
Is it true that\/ $\ind \q^\xi=\ind\q$  whenever $\q^\xi\oplus\h=\q$?
\end{qtn}

\begin{rmk}
We see that if $\q^\xi\oplus\h=\q$, then $\ind\q^\xi+\ind\h\ge\ind\q$. More generally, the integer
$\ind\h+\ind\rr-\ind\q$ can be considered for any subalgebras $\h$ and $\rr$ such that $\h\oplus\rr=\q$.
Examples show that, for such an arbitrary triple $(\q,\h,\rr)$, there is no constraint on the sign of that 
difference. But, for some classes of triples, one obtains the ``expected'' answer. Let $\p$ be 
a parabolic subalgebra of a semisimple Lie algebra $\g$. If $\p^{\sf nil}$ is the nilradical of $\p$ and 
$\p^{\sf nil}_-$ is the nilradical of an
opposite parabolic subalgebra, then $\g=\p\oplus\p^{\sf nil}_-$ and
\[
    \ind\p+\ind\p^{\sf nil}_-=\ind\p+\ind\p^{\sf nil}\ge\rk\g=\ind\g .
\] 
The latter is conjectured in~\cite[Sect.\,6]{p03} and proved in~\cite{yu08}.
\end{rmk}

\section{On a question of Duflo}   \label{sect:quest}
\noindent
Let $G$ be a simple algebraic group with $\g=\Lie G$ and $H$ a proper connected subgroup of $G$. 
Using the Killing form, we identify $\g^*$ with $\g$ and deal with the adjoint representation of $\g$.
As above, we consider the projection $p:\g\to \h^*$. The following question was asked by 
Michel Duflo at the beginning of 2000's.

\begin{qtn}               
Suppose that $x\in\g_{\sf reg}$. Is it true that $p(G{\cdot}x)\cap\h^*_{\sf reg}\ne\varnothing$?
\end{qtn}
Using results of Section~\ref{sect:estimate-index} and Appendix~\ref{app}, we give an affirmative answer 
to this question. More generally,  we consider semisimple algebraic groups $G$ and connected subgroups 
$H$ that do not contain infinite normal subgroups of $G$. Such subgroups of $G$ are said to be 
{\it essentially proper}. If $G$ is simple, then "proper"="essentially proper".
Let us begin with a well-known simple assertion. 

\begin{lm}    \label{lm:ss}
Let $H$ be an essentially proper subgroup of a semisimple Lie group $G$. If $x\in\g_{\sf reg}$ is 
semisimple, then there is $y\in G{\cdot}x$ such that $\g^y\cap\h=\{0\}$. In particular, $\dim G/H\ge \rk G$.
\end{lm}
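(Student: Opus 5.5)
For a regular semisimple $x$, the centraliser $\g^x$ is a Cartan subalgebra, say $\te$. For $y=g{\cdot}x$ we have $\g^y=\mathrm{Ad}(g)\te$. The statement is therefore equivalent to the existence of a Cartan subalgebra $\te'$ conjugate to $\te$ with $\te'\cap\h=\{0\}$. Since all Cartan subalgebras are conjugate, I need some Cartan subalgebra meeting $\h$ trivially. The plan is a dimension count on an incidence variety in $G/T\times \h$, or equivalently a genericity argument.

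\emph{Step 1 (reduction to an elementwise statement).} Consider the set of regular semisimple elements of $\g$ and the map $G\times\te_{\sf reg}\to\g$, $(g,t)\mapsto g{\cdot}t$. For a nonzero $h\in\h$, the Cartan subalgebras $\mathrm{Ad}(g)\te$ containing $h$ are exactly those with $g^{-1}{\cdot}h\in\te$. Hence $\{gT\in G/N\mid \mathrm{Ad}(g)\te\cap\h\ne0\}$ is the image of
\[
Z=\{(gN,[h])\in G/N\times\BP(\h)\mid g^{-1}{\cdot}h\in\te\},
\]
where $N=N_G(\te)$. It suffices to show $\dim Z<\dim G/N=\dim\g-\rk\g$.

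\emph{Step 2 (fibres over $\BP(\h)$).} The fibre of $Z$ over $[h]$ is the set of Cartan subalgebras containing $h$, and it is nonempty only if $h$ is semisimple. For semisimple $h$, these are the Cartan subalgebras of $\g^h$, a variety of dimension $\dim\g^h-\rk\g$. So $\dim Z$ is at most the maximum over the strata of $\BP(\h\cap\g_{ss})$, where $\dim\g^h=d$, of $(\dim\text{stratum}) + d-\rk\g$. Comparing with $\dim\g-\rk\g$, I need
\[
\dim\{h\in\h\text{ semisimple}:\dim\g^h=d\}-1<\dim\g-d=\dim G{\cdot}h
\]
for each $d$. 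Equivalently, no stratum of semisimple elements of $\h$ with centraliser dimension $d$ has dimension $\ge \dim\g-d+1$. A stratum of elements with $\dim\g^h=d$ lies in the union of $G$-orbits of such elements, and the union of these orbits in a fixed sheet-type stratum has dimension $\dim\g-d+\dim\z(\g^h)$. So the problematic case is when $\h$ contains an open piece of this union, i.e.\ when $\h\supset G{\cdot}(\text{something spanning})$.

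\emph{Step 3 (main obstacle: using essential properness).} Suppose the inequality fails for some $d$. Then $\h\cap\g_{ss}$ contains a subset $S$ of dimension at least $\dim G{\cdot}h+1$, consisting of elements $h$ of the same type. I would argue that this forces $\h$ to contain a $G$-stable (hence ideal-spanning) set. Because a finite union of $G{\cdot}h$-orbits times the possible centre directions has dimension $\dim G{\cdot}h+\dim\z(\g^h)$, the subset $S\subset\h$ must contain, up to closure, entire $G$-orbits in a positive-dimensional family; I expect to need a tangent-space argument here: if $S\supset$ an open part of $G{\cdot}h$ then $[\g,h]\subset\h$. Then $\h$ contains the linear span of $G{\cdot}h$, which is a nonzero ideal of $\g$ (a sum of simple factors, since $h\ne0$ and the span is $G$-stable). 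Hence $H$ contains an infinite normal subgroup, a contradiction. Making this rigorous, i.e.\ showing that large dimension of $S$ inside a linear subspace forces $[\g,h]\subset\h$ for generic $h\in S$, is the delicate point. I would try to handle it via the fibration $S\to$ (types in $\z$-direction), using that $\h$ is linear, so the tangent spaces to $S$ lie in $\h$.

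Finally, the inequality $\dim G/H\ge\rk G$ follows from the first claim: $\g^y\cap\h=0$ with $\dim\g^y=\rk\g$ gives $\dim\h\le\dim\g-\rk\g$.
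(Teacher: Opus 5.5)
Your reduction in Steps 1--2 is sound, and the inequalities $\dim S_d\le\dim\g-d$ it asks for are in fact true. But the whole content of the lemma is deferred to Step~3, and the mechanism you propose there fails.

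The regular stratum $d=\rk\g$ already shows how much Step~3 carries. There the semisimple elements of $\h\cap\g_{\sf reg}$ form an open subset of $\h$ when nonempty, so the required inequality is exactly $\dim\h\le\dim\g-\rk\g$. That is the ``in particular'' part of the lemma itself.

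Your dimension count cannot force $S$ to contain an open piece of a $G$-orbit. The set $S$ lies in a Jordan class fibred over a $\dim\z(\g^h)$-dimensional family of orbit types. So $\dim S\ge\dim G{\cdot}h+1$ only gives $\dim(S\cap G{\cdot}h)\ge\dim G{\cdot}h+1-\dim\z(\g^h)$, which is useless once $\dim\z(\g^h)\ge2$; for regular $h$, $\dim\z(\g^h)=\rk\g$. Likewise, $T_hS\subset\h\cap\bigl([\g,h]\oplus\z(\g^h)\bigr)$ does not yield $[\g,h]\subset\h$.

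A concrete example: take $\g=\g_1\oplus\sel_2$ with $\rk\g_1\ge2$, and $\h=\g_1\oplus\te_2$ with $\te_2$ a Cartan subalgebra of $\sel_2$.
\begin{itemize}
\item The regular stratum has dimension $\dim\g_1+1$, which exceeds $\dim\g-\rk\g=\dim\g_1-\rk\g_1+2$.
\item Yet $S$ meets each regular orbit $G{\cdot}h$ in a set of dimension $\dim\g_1-\rk\g_1<\dim G{\cdot}h$.
\item Moreover $[\g,h]\not\subset\h$ for every $h\in S$.
\end{itemize}
Here $\h$ really does contain an ideal ($\g_1$), but your argument does not detect it.

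The missing idea is to let the torus act on $G/H$ rather than stratifying $\h$. This is what the paper does.
\begin{itemize}
\item $T=G^x$ is a maximal torus.
\item The kernel $\bigcap_g gHg^{-1}$ of $(G:G/H)$ is normal in $G$ and contained in $H$. Its identity component is therefore trivial because $H$ is essentially proper, so the kernel is finite.
\item For a torus acting on an irreducible variety, generic stabilisers coincide with the kernel of the action.
\end{itemize}
Hence $T\cap gHg^{-1}$ is finite for generic $g$, i.e.\ $\te\cap\mathrm{Ad}(g)\h=\{0\}$, and $y=g^{-1}{\cdot}x$ works.

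Your Step~2 inequalities can be proved by a refinement of this torus argument, applied to the tori $Z(G^h)^0$ acting on $G/H$ together with a codimension estimate for loci of larger stabilisers. But once the torus action is used, Steps 1--2 are unnecessary. Your final deduction of $\dim G/H\ge\rk G$ is fine.
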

\begin{proof}
By the hypothesis, $T:=G^x$ is a maximal torus in $G$. Since $H$ is essentially proper, the action 
$(G:G/H)$ is {\it locally effective}, i.e., the kernel of action is finite. Then the same is true for  $(T:G/H)$. 
For the $T$-actions on irreducible algebraic varieties, generic stabilisers coincide with the kernel of 
actions~\cite[\S\,7.2]{vp}. Hence generic stabilisers for  the 
$T$-action on $G/H$ are finite. Thus,  $g{\cdot}\te\cap \h=\{0\}$ for some $g\in G$ and then 
$\g^y\cap\h=\{0\}$ for $y=g{\cdot}x$.
\end{proof}

However, the main point is to get the same result for the regular nilpotent elements. Then the passage to 
arbitrary regular elements of $\g$ is obtained via the use of invariant-theoretic properties of the 
adjoint representation of $G$. Let $\N=\N(\g)$ denote the nilpotent cone in $\g$. By~\cite{ko63}, $\N$
contains finitely many $G$-orbits. Furthermore, $\N_{\sf reg}:=\N\cap\g_{\sf reg}$ is the dense $G$-orbit in 
$\N$ and it is the smooth locus of $\N$. The elements of $\N_{\sf reg}$ are said to be 
{\it principal nilpotent}, and this $G$-orbit is sometimes denoted by $\co_{\sf pr}$.

\begin{prop}        \label{prop:nilp}
If $x\in\N_{\sf reg}$ and $H$ is an essentially proper subgroup of $G$, then there is 
$y\in G{\cdot}x$ such that $\g^y\cap\h=\{0\}$, i.e., the action $(H:\N)$ is locally free.
\end{prop}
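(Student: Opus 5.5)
The plan is to reduce to \emph{maximal} essentially proper subgroups and then handle the two types of maximal subalgebras separately. The reduction is immediate: if $H\subset H'$ and $\g^y\cap\h'=\{0\}$, then $\g^y\cap\h=\{0\}$. So we may assume $\h$ is a maximal essentially proper subalgebra. By Morozov's theorem (applied componentwise), such an $\h$ is then either parabolic or reductive.

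Throughout, I use two facts about $e\in\N_{\sf reg}$. First, $\g^e$ is abelian of dimension $r=\rk\g$, consists of nilpotent elements, and lies in the nilradical $\mathfrak n$ of the unique Borel subalgebra containing $e$. Second, $\g^e$ has a basis $e_1=e,\dots,e_r$ of $\ad h$-eigenvectors, where $h$ is the characteristic of $e$. The eigenvalues are $2m_i$, with $m_i$ the exponents. The top vector $e_r$ spans the highest root space $\g_\theta$ once $e$ is placed in $\mathfrak n=\bigoplus_{\ap>0}\g_\ap$.

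\textbf{Parabolic case.} Let $\p\supset\be$ with Levi decomposition $\p=\el\oplus\p^{\sf nil}$, where $\be$ is the Borel subalgebra with nilradical $\mathfrak n$. Take $y$ principal nilpotent in the opposite nilradical $\mathfrak n_-$. Then $\g^y\cap\p\subset\mathfrak n_-\cap\el$. I then use the grading by $h$: an element $v\in\g^y$ is a combination of pieces $v_i\in\g^y$ of degree $-2m_i$, and $\mathfrak n_-\cap\el$ is $h$-stable. Hence $\g^y\cap\p$ is spanned by those graded pieces lying in $\el$. Each graded piece of $\g^y$ is a specific vector $\ad(f)^{2m_i}$-type of the $\tri$-lowest weight vectors. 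If $\el$ is proper, I will then conjugate $y$ by a generic element of $P_-$ (or of the Weyl group of $L$) so that none of these graded pieces lands in $\el$. The justification is that the projection of each such piece onto $\mathfrak n_-\cap\p^{\sf nil}_-$ is nonzero for generic conjugates, because these pieces generate $\g$ as a $G$-module. Checking this genericity is routine but needs care.

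\textbf{Reductive case.} Here I would use an incidence/dimension count. Let $I=\{(y,[v])\in\co_{\sf pr}\times\BP(\h)\mid [y,v]=0\}$. Only nilpotent $v$ occur, since $\g^y\subset\N$. So I stratify $\h\cap\N$ by the $G$-orbits $\co\subset\N$. Counting pairs $(y,v)$ with $y\in\co_{\sf pr}$, $v\in\co$, $[y,v]=0$ in two ways gives
\[
\dim\{y\in\co_{\sf pr}\mid v\in\g^y\}=\dim\co_{\sf pr}+\dim(\g^e\cap\co)-\dim\co .
\]
Hence the bad locus has dimension less than $\dim\co_{\sf pr}$ provided, for every nonzero nilpotent orbit $\co$,
\[
\dim(\h\cap\co)+\dim(\g^e\cap\co)\le\dim\co .
\]
For this I intend to use two bounds. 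The first is $\dim(\g^e\cap\co)\le\frac12\dim\co$, which follows since $\g^e\cap\co\subset\mathfrak n\cap\co$ and, by Spaltenstein, $\dim(\mathfrak n\cap\co)=\frac12\dim\co$. The second is $\dim(\h\cap\co)\le\frac12\dim\co$ with strict inequality somewhere. This second bound would come from writing $\h\cap\N=H{\cdot}(\mathfrak n_\h)$ for a Borel of $\h$ and comparing with $\mathfrak n_\g\cap\co$.

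The main obstacle is exactly this second inequality for reductive $\h$: I need to ensure the sum is strictly less than $\dim\co$ after projectivising, not merely $\le$. What I would try first is the estimate
\[
\dim(\h\cap\co)\le\dim H{\cdot}(\mathfrak n_\h\cap\co)\le\dim(\mathfrak n_\h\cap\co)+\dim\h-\dim\be_\h ,
\]
combined with the fact that $\h$ is a proper reductive subalgebra, so $\h\cap\co$ is a union of finitely many $H$-orbits of nilpotent elements of $\h$. Each such orbit has dimension at most half of $\dim\co$, by the Kostant--Kirillov symplectic structure and the isotropy of $\h$-orbits inside $\co$ when $\h^\perp$ is nonzero. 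If this gives only a non-strict bound, the projectivisation supplies the missing $1$, which is what is needed.
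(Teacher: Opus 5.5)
\textbf{The reductive case has a genuine gap.} The bound $\dim(\h\cap\co)\le\frac12\dim\co$ is false, and the reason you give for it points the wrong way.

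Take $v\in\h$ with $\h$ reductive. The Killing form $\kappa$ is nondegenerate on $\h$. The Kirillov--Kostant form on $T_v(H{\cdot}v)=[\h,v]$ is $([a,v],[b,v])\mapsto\kappa(v,[a,b])$ for $a,b\in\h$. Its kernel consists of those $[a,v]$ lying in $\h\cap\h^\perp=\{0\}$. So the $H$-orbits in $\h\cap\co$ are \emph{symplectic}, not isotropic, and nothing caps them at half of $\dim\co$.

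Here is a counterexample among maximal subalgebras. Let $\h=\mathfrak{so}_{2n}\subset\g=\mathfrak{so}_{2n+1}$ with $n\ge 2$, and let $\co=\co(2n{-}1,1,1)$ be the subregular orbit, so $\dim\co=2n^2-2$. The regular nilpotent orbit of $\mathfrak{so}_{2n}$ lies in $\h\cap\co$ and has dimension $2n^2-2n>n^2-1$; for $n=2$ this reads $4>3$. Your combined inequality survives here only because $\g^e\cap\co=\varnothing$. So you would have to bound $\dim(\h\cap\co)+\dim(\g^e\cap\co)$ jointly, orbit by orbit and subalgebra by subalgebra, and the proposal gives no way to do that. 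The closing remark about projectivisation also double-counts: you already used it to make the required inequality non-strict.

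The missing idea is to drop dimension counting and produce a \emph{closed} $H$-orbit inside $\N_{\sf reg}$. Its stabiliser $H\cap G^x$ is then reductive, while $(G^x)^0$ is unipotent, so the stabiliser is finite. In the paper this comes from Vinberg's stability theory together with Broer's multiplicity formula for the ideal of $\N_{\sf sing}$. These give $H$-invariants on $\N$ that vanish on a component of $\N_{\sf sing}$ but not on $\N$, which rules out all closed $H$-orbits lying in $\N_{\sf sing}$.

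\textbf{The parabolic case is incomplete, and your proposed fix would not work.} An element of $N_L(T)\subset P$ normalises $\p$, so conjugating $y$ by it cannot change $\dim(\g^y\cap\p)$. Conjugating by a generic element of $P_-$ destroys the inclusion $\g^y\subset\mathfrak n_-$ on which your graded analysis rests. And ``the pieces generate $\g$'' says nothing about whether an $r$-dimensional subspace meets $\p$.

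In fact no conjugation is needed. Let $y=\sum_{\ap\in\Pi}f_\ap$ and $\el=\el_S$, and take $v\in\g^y\cap\el\subset\el\cap\mathfrak n_-$. Grade $\g$ by the coefficients of the simple roots outside $S$. This splits $[v,y]=0$ into $[v,\sum_{\ap\in S}f_\ap]=0$ and $[v,f_\ap]=0$ for every $\ap\notin S$. Each such $f_\ap$ is an $\el$-highest weight vector of weight $-\ap$. Hence an element of $\el\cap\mathfrak n_-$ killing it lies in $\el_{S'}$, where $S'$ is the set of roots in $S$ orthogonal to $\ap$. Iterate this, using that every connected component of the Dynkin diagram meets $\Pi\setminus S$ (this is essential properness). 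It forces $v\in\te\cap\mathfrak n_-=\{0\}$.

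This argument is more elementary than the paper's comparison $\trdeg\bbk[\gC_{\gV}]^A=\trdeg\bbk[\gC_{\gV}]^T$ on the cone of highest weight vectors. It does not, however, touch the reductive case.
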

\begin{proof}
Since $x$ is regular nilpotent, the group  $A:=(G^x)^0$ is unipotent and $G^x/(G^x)^0$ is the centre of 
$G$. It suffices to prove the assertion for the maximal subalgebras $\h$ of $\g$. It is known that a maximal 
subalgebra of $\g$ is either {\sl semisimple\/} or {\sl parabolic}~\cite[Theorem\,4]{K51}.

\textbullet \quad Let $\h=\p$ be a {\sl parabolic\/} subalgebra and $\p=\Lie P$. 
\\ Clearly, the action $(P:\N)$ is locally free if and only if $(A:G/P)$ is. To prove the latter, we take a simple 
$G$-module $\gV$ and a highest weight vector $v\in\gV$ such that $P$ is the stabiliser of the line $\lg v\rg\in\BP\eus V$. Then 
$\gC_{\eus V}:=\ov{G{\cdot}v}=G{\cdot}v\cup\{0\}$ is the affine cone over 
$G/P\simeq G{\cdot}\lg v\rg\subset \BP\eus V$. Since $A$ is unipotent, it suffices to prove that the action 
$(A:\gC_{\eus V})$ is locally free. This can be derived from a comparison of the actions of $T$ and $A$ on 
$\gC_{\eus V}$. By~\cite[Theorem\,1.5]{nilp-tori}, we have 
$\trdeg(\bbk[\gC_{\eus V}]^A)=\trdeg(\bbk[\gC_{\eus V}]^T)$. Furthermore, the $T$-action on
$\gC_{\eus V}$ is stable and locally free~\cite[Lemma\,2.1]{tg97}. Hence 
$\trdeg(\bbk[\gC_{\eus V}]^T)=\dim\gC_{\eus V}-\rk\g$. As $A$ has no rational characters, the field of 
$A$-invariant rational functions, $\bbk(\gC_{\eus V})^A$, is the field of fractions of the algebra 
$\bbk[\gC_{\eus V}]^A$. Then using the Rosenlicht theorem~\cite[\S\,2.3]{vp}, we obtain
\[
  \max_{z\in\gC_{\eus V}}\dim A{\cdot}z=\dim\gC_{\eus V}-\trdeg\bbk[\gC_{\eus V}]^T=\rk\g=\dim A .
\]
\indent \textbullet \quad Let $\h$ be a {\sl semisimple\/} subalgebra of $\g$. \\
In this case, we have a stronger result.
Recall that if $H$ is a reductive group and $X$ is an affine variety, then the action $(H:X)$ is said to be 
{\it stable}, if the union of closed $H$-orbits is dense in $X$. A purely algebraic approach to 
stability of actions, together with a number of astonishing new results, is given by 
Vinberg~\cite{vi00}.  Our stronger result is:
\\[.5ex]
\centerline{ \emph{If $H\subset G$ is reductive and essentially proper, then
the action $(H:\N)$ is stable and locally free.} } 
\vskip.4ex
(See Appendix~\ref{app} for the details and proof.) This completes the proof of proposition.
\end{proof}

\begin{prop}      \label{prop:any}
If $x\in\g_{\sf reg}$ and $H$ is an essentially proper subgroup of $G$, then there is $y\in G{\cdot}x$ such 
that $\g^y\cap\h=\{0\}$. 
\end{prop}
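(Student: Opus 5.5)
The plan is to pass from the two extreme cases already settled, regular semisimple (Lemma~\ref{lm:ss}) and regular nilpotent (Proposition~\ref{prop:nilp}), to an arbitrary regular $x$ by a degeneration argument that uses the invariant theory of the adjoint representation. The key condition is \emph{open}: for fixed $\h$, the set
\[
U=\{y\in\g_{\sf reg}\mid \g^y\cap\h=\{0\}\}
\]
is open in $\g_{\sf reg}$. Indeed, on $\g_{\sf reg}$ the map $y\mapsto\g^y$ is a morphism to the Grassmannian $\mathrm{Gr}(\rk\g,\g)$. Also $\dim(\g^y\cap\h)$ is upper semicontinuous on $\g_{\sf reg}$. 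Moreover, $U$ is $G$-stable. So it suffices to show that $U$ meets every fibre $\pi^{-1}(c)\cap\g_{\sf reg}$ of the quotient map $\pi:\g\to\g\md G\simeq\BA^{r}$. Each such fibre consists of a single $G$-orbit, namely the regular orbit in that fibre, by Kostant~\cite{ko63}.

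To realise the degeneration, use the $\bbk^\times$-action by homotheties together with Kostant's section. Let $\gS=e+\g^f\subset\g_{\sf reg}$ be a Kostant (Slodowy) slice through a principal nilpotent $e$. Then $\pi\vert_{\gS}:\gS\to\BA^r$ is an isomorphism. Hence every regular orbit meets $\gS$ in exactly one point $s(c)$, and $s(0)=e$.

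By Proposition~\ref{prop:nilp}, some $y_0=g{\cdot}e$ satisfies $\g^{y_0}\cap\h=\{0\}$. Replacing $\h$ by $g^{-1}{\cdot}\h$, which is harmless since the statement is $G$-invariant, we may assume $e\in U$. Then $U\cap\gS$ is a nonempty open subset of $\gS\simeq\BA^r$ containing $e=s(0)$.

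To reach an arbitrary $c$, use the twisted $\bbk^\times$-action $t\mapsto t^2\,\mathrm{Ad}(\lambda(t)^{-1})$ on $\gS$, with $\lambda$ the cocharacter from the $\tri$-triple. This action contracts $\gS$ to $e$ as $t\to0$. It preserves both $\g_{\sf reg}$ and $U$: homotheties do not change centralisers, and we conjugate $\h$ by $\lambda(t)$. The second point needs care, since conjugating moves $\h$. It is cleaner to argue directly along orbits of the group generated by $G$ and homotheties. The set $\{c\mid s(c)\in G{\cdot}U\}$ is open and contains $0$. It is also stable under the weighted $\bbk^\times$-action on $\BA^r$, because $\g^{ty}=\g^y$ and $G{\cdot}U=U$. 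Every $\bbk^\times$-orbit on $\BA^r$ with positive weights has $0$ in its closure, so every orbit meets any open neighbourhood of $0$. Hence this set is all of $\BA^r$, and $U$ meets every regular orbit.

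The main obstacle is exactly this stability check: one must verify that $U$ itself, not only its $G$-saturation, is invariant under scalar multiplication. That holds because $\g^{ty}=\g^y$ for $t\ne0$. The remaining care is that the regular orbit in $\pi^{-1}(c)$ is unique and $\bbk^\times$-equivariant, i.e.\ $t\cdot(G{\cdot}s(c))=G{\cdot}s(t{\cdot}c)$. The argument then concludes without further use of Lemma~\ref{lm:ss}, which is recovered as the special case of a semisimple fibre.
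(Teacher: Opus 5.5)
Your argument is correct in substance, but one claim in it is false: $U$ is \emph{not} $G$-stable. Since $\g^{g\cdot y}=g\cdot\g^{y}$, one has $\g^{g\cdot y}\cap\h=g\cdot(\g^y\cap g^{-1}{\cdot}\h)$. So $U$ is stable only under $N_G(\h)$ and under homotheties, and the assertion ``$G{\cdot}U=U$'' in your stability check is wrong.

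Nothing actually depends on it. Put $U^\sharp:=G{\cdot}U$, the set of regular $y$ whose $G$-orbit meets $U$. This set has the properties you need:
\begin{itemize}
\item it is open, being a union of translates of the open set $U$;
\item it is $G$-stable by construction;
\item it is stable under $y\mapsto ty$, because $U$ is (as $\g^{ty}=\g^y$) and homotheties commute with $G$;
\item it contains $e$ by Proposition~\ref{prop:nilp}, so your preliminary conjugation of $\h$ is unnecessary.
\end{itemize}
Then $C=s^{-1}(U^\sharp)$ is open in $\BA^r$ and contains $0$. It is stable under the weighted $\bbk^\times$-action, because $s(t{\cdot}c)\in G{\cdot}(t\,s(c))$ by uniqueness of the regular orbit in each fibre. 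Hence $C=\BA^r$, so every regular orbit meets $U^\sharp$ and therefore meets $U$. With this correction the proof is complete.

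Your proof and the paper's share the same degeneration idea: the condition $\g^y\cap\h=\{0\}$ is open, it holds somewhere on $\N$ by Proposition~\ref{prop:nilp}, and it is insensitive to homotheties. The two proofs implement it differently.
\begin{itemize}
\item The paper takes the one-parameter family $X=\pi^{-1}(\pi(\bbk x))\supset\N$ and uses the irreducibility and equidimensionality of all fibres of $\pi$ to conclude that $X$ is irreducible. The open set of points of $X$ with finite $H$-stabiliser is nonempty on $\N$, hence dense, hence meets some fibre $\eus F_t$ with $t\ne0$. Scaling carries this to $\eus F_1$, where the open set meets the dense orbit $G{\cdot}x$.
\item You replace the irreducibility of $X$ with Kostant's section and the uniqueness of the regular orbit in a fibre. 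Your argument stays inside $\g_{\sf reg}$ and makes the contraction to $\N$ explicit on the base $\g\md G$.
\end{itemize}
The paper's route avoids the section but relies on the whole fibres, including their non-regular parts, to make $X$ irreducible. Either route works.
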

\begin{proof}
Let $\pi: \g\to\g\md G:=\spe(\bbk[\g]^G)\simeq \BA^{\rk\g}$ be the categorical quotient~\cite[\S\,4]{vp} for 
the adjoint action $(G:\g)$. Then $\N=\pi^{-1}(\pi(0))$ and each fibre of $\pi$ contains a dense orbit that 
belongs to $\g_{\sf reg}$~\cite{ko63}. Assuming that $x\not\in\N$, consider the line 
$\bbk x=\lg x\rg\subset\g$ and $\pi(\bbk x)\subset \g\md G$. Then $\pi(\bbk x)$ is a curve 
through $\pi(x)$ and $\pi(0)$. Since all fibres of $\pi$ are irreducible and have one and the same 
dimension $\dim\g-\rk\g$, the one-parameter family of fibres $X:=\pi^{-1}(\pi(\bbk x))\subset\g$ is 
irreducible, too. By Proposition~\ref{prop:nilp}, $(H:\N)$ is locally free. As $X\supset \N$, the action 
$(H:X)$ is also locally free. Hence the open subset $X\setminus \N$ contains points with a finite 
$H$-stabiliser. Since all fibres $\eus F_t=\pi^{-1}(\pi(tx))$, $t\ne 0$, are isomorphic $G$-varieties, each 
$\eus F_t$ contains points with finite $H$-stabilisers. Since $G{\cdot}x$ is the dense orbit in $\eus F_1$, 
the same is true for $G{\cdot}x$, and we are done.
\end{proof}

\begin{thm}   \label{thm:duflo}
Let $H$ be an essentially proper subgroup of $G$ and $p:\g\simeq\g^*\to\h^*$ the corresponding 
projection. If $x\in\g_{\sf reg}$, then $p(G{\cdot}x)$ is dense in $\h^*$. In particular, 
 $p(G{\cdot}x)\cap\h^*_{\sf reg}\ne\varnothing$.
\end{thm}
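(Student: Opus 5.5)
The plan is to combine Proposition~\ref{prop:any} with Corollary~\ref{cor:3-cases}{\sf (i)}. Take $\q=\g$, $Q=G$, and $\co=G{\cdot}x$ with $x\in\g_{\sf reg}$, identifying $\g^*$ with $\g$ via the Killing form as in this section. By Proposition~\ref{prop:any} there is $y\in\co$ with $\g^y\cap\h=\{0\}$, so $b=\min_{\eta\in\co}\dim(\g^\eta\cap\h)=0$.

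To apply the results of Section~\ref{sect:estimate-index}, the $H$-action on $\co$ must be non-trivial. For simple $G$ this is Example~\ref{ex:g-simple}, since $x\ne 0$. In the semisimple case, suppose $\h\subset\g^\eta$ for all $\eta\in\co$. Then $\h\subset\bigcap_{g}g{\cdot}\g^x$, which is an ideal of $\g$ contained in the centraliser of a regular element. Since $\g^x$ is abelian of dimension $\rk\g$, this ideal is abelian, hence zero. This contradicts $\h\ne 0$; alternatively, it contradicts $\g^y\cap\h=\{0\}$ with $\h\neq0$. So the action is non-trivial.

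Next I would apply the dimension count of Lemma~\ref{lm:1}{\sf (ii)} directly at $y$: $\dim p(\g{\cdot}y)=\dim\h-\dim(\h\cap\g^y)=\dim\h$. Hence the differential of $\hat p=p\vert_{\co}\colon\co\to\h^*$ at $y$ is surjective. This is exactly the dominance statement of Corollary~\ref{cor:3-cases}{\sf (i)}, and in characteristic zero it gives that $\hat p(\co)$ contains a dense open subset of $\h^*$. So $p(G{\cdot}x)$ is dense in $\h^*$.

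Finally, $\h^*_{\sf reg}$ is a dense open subset of $\h^*$. A constructible dense subset contains a dense open set, so $p(G{\cdot}x)$ meets $\h^*_{\sf reg}$. The only real work is Proposition~\ref{prop:any}, which rests on Proposition~\ref{prop:nilp} and the appendix. Given those, the remaining step needing care is the non-triviality check above.
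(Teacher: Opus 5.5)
Your proposal is correct and follows the paper's proof exactly: Proposition~\ref{prop:any} supplies $y\in G{\cdot}x$ with $\g^y\cap\h=\{0\}$, and Corollary~\ref{cor:3-cases}{\sf (i)} then gives the density of $p(G{\cdot}x)$. Your additional checks are sound and only make explicit what the paper leaves implicit: the non-triviality of the $H$-action, the surjectivity of the differential of $\hat p$ at $y$, and the constructibility argument showing that $p(G{\cdot}x)$ meets $\h^*_{\sf reg}$.
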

\begin{proof}
By Proposition~\ref{prop:any}, there is $y\in G{\cdot}x$ such that $\g^y\cap\h=\{0\}$. It then follows from
Corollary~\ref{cor:3-cases}{\sf (i)} that $p(G{\cdot}x)$ is dense in $\h^*$. 
\end{proof}

\section{Strange coadjoint orbits}
\label{sect:strange}
\noindent
When I was visiting Universit\'e de Poitiers at the beginning of 2000's, M.\,Ra\"\i s told me about  a certain 
class of orbits in semisimple Lie algebras, which he suggested to call ``strange'' (see below).  His interest 
in these orbits was motivated by a connection with the transverse Poisson structure of coadjoint orbits, 
see e.g.~\cite{cr,oh}. Soon afterwards, I explained to H.~Sabourin (Poitiers) how to prove that all 
spherical $G$-orbits in a reductive Lie algebra $\g$ are strange (via the use of my theory of doubled 
actions~\cite{disser}). Then he took on himself the task of publishing my proof in~\cite{s05}. 
Below, I present  some other results of mine related to strange orbits. 

Following Ra\"\i s, we give the following general definitions.

\begin{df} 
For an arbitrary Lie algebra $\q$, the coadjoint $Q$-orbit $\co\subset \q^*$ is said to be {\it strange}, if 
there is a subalgebra $\h\subset\q$ and $\xi\in\co$ such that $\q^\xi\oplus\h=\q$. Then we say that $\h$ 
is a {\it complementary subalgebra\/} for $\q^\xi$ (or for $\co$) and $(\co,\h)$ is a {\it strange pair}.
\end{df}

\noindent
For a strange pair,  $\dim\co=\dim\h$ and $\h$ is Frobenius by Corollary~\ref{cor:3-cases}{\sf (iii)}. Hence 
$\h$ is neither reductive nor nilpotent. In this case, $H$ has a dense orbit in $\co$ and the action
$(H:\co)$ is locally free.
It might be interesting to 
explore strange orbits for various classes of Lie algebras; especially, in connection with 
Question~\ref{qu:1}.

In the rest of this section, we assume that $Q=G$ is reductive, hence $\g^*\simeq\g$ as $G$-module. For 
any $m\in\BN$, consider the locally closed subset $\g^{(m)}=\{x\in\g\mid \dim G{\cdot}x=m\}$. The 
irreducible components of all these subsets are called {\it sheets}. The {\it rank\/} of a sheet $\gS$ is
$\rk\gS=\dim(\bar\gS\md G)$. For instance, if $m=\dim\g-\rk\g$, then
$\g^{(m)}=\g_{\sf reg}$ is the 
unique open sheet in $\g$ and $\rk\g_{\sf reg}$ is the usual rank of $\g$. Sheets have first been studied by Dixmier for $\g=\sln$~\cite{dixm}. The basic 
general results are obtained in~\cite{bo81,bk79}. In particular, each sheet contains a unique nilpotent 
orbit. However, if $\g\ne \sln$, then a nilpotent orbit can belong to several sheets, i.e., sheets are not 
necessarily disjoint. A sheet containing a semisimple orbit is said to be {\it Dixmier}. For $\sln$, all sheets are Dixmier (and disjoint).

\begin{thm}    \label{thm:sheets}
Suppose that a nilpotent $G$-orbit $\co$ is strange. Let $\gS_\co$ be a sheet of\/ $\g$ that contains 
$\co$. Then all $G$-orbits in $\gS_\co$ are strange. Moreover, if $(\co,\h)$ is a strange pair, then so is 
$(\tilde\co,\h)$ for any orbit $\tilde\co\subset\gS_\co$.
\end{thm}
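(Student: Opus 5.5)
The plan is to use semicontinuity of the intersection $\g^x\cap\h$ along the sheet. Fix a strange pair $(\co,\h)$. By definition there is $e\in\co$ with $\g^e\oplus\h=\g$. Every orbit in $\gS=\gS_\co$ has the same dimension $m=\dim\co$, so $\dim\g^x=\dim\g-m=\dim\g^e$ for all $x\in\gS$. For any $x\in\gS$ we have $\dim\g^x+\dim\h=\dim\g$. Hence $\g^x\oplus\h=\g$ holds if and only if $\g^x\cap\h=\{0\}$. The condition ``there exists $y\in G{\cdot}x$ with $\g^y\cap\h=\{0\}$'' is therefore what must be shown for every $x\in\gS$.

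\textbf{Step 1: the good set is open.} On $\gS$ the map $x\mapsto\g^x$ is a morphism to the Grassmannian $\mathrm{Gr}(\dim\g-m,\g)$, since $\g^x$ is the kernel of $\ad x$ and $\ad x$ has constant rank on $\gS$. The condition $\g^x\cap\h=\{0\}$ is open on the Grassmannian: it is the nonvanishing of a determinant. So
\[
U=\{x\in\gS\mid \g^x\cap\h=\{0\}\}
\]
is open in $\gS$. Also $G{\cdot}U=\{x\in\gS\mid \g^y\cap\h=\{0\}\text{ for some }y\in G{\cdot}x\}$ is open and $G$-stable, and it contains $e$.

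\textbf{Step 2: every orbit meets the good set.} I would show that every $G$-orbit in $\gS$ has $\co$ in its closure, so that an open $G$-stable set containing $\co$ meets it. For $x\in\gS$, consider the line $\bbk^\times x$. Each nonzero multiple $tx$ lies in $\gS$, since $\gS$ is conical: sheets are stable under scaling, as $\g^{tx}=\g^x$. Hence $\overline{G{\cdot}\bbk^\times x}\cap\gS$ contains the nilpotent orbits lying in $\overline{G\bbk^\times x}$. I would use the structure of sheets from \cite{bo81,bk79}:
\[
\gS=G{\cdot}(\z(\el)^{\sf reg}+\N_\el\text{-part})
\]
is a ``decomposition class'' closure. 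Concretely, $x=s+n$ with $s$ in the centre of a Levi $\el$ and $\g^x$-orbit data induced, and the unique nilpotent orbit of $\gS$ is $\mathrm{Ind}$. Then $\overline{G{\cdot}\bbk^\times x}\cap\N$ is the closure of the induced orbit $\co$, by the Borho–Kraft description of $\lim_{t\to0}G{\cdot}tx$. Since $\co$ is the unique nilpotent orbit in $\gS$ and it lies in $\overline{G{\cdot}\bbk^\times x}$, the open set $G{\cdot}U$ meets $G{\cdot}\bbk^\times x$. As $G{\cdot}U$ is conical (because $\g^{tx}=\g^x$), it contains $x$. Thus some $y\in G{\cdot}x$ satisfies $\g^y\oplus\h=\g$, which gives both assertions of the theorem with the same $\h$.

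\textbf{Main obstacle.} The delicate point is Step 2: the claim $\co\subset\overline{G{\cdot}\bbk^\times x}$ for every $x\in\gS$, including non-Dixmier sheets and orbits not of the simplest form. I would invoke the Borho–Kraft result that $\gS$ is the union of $G$-saturations of $c\,(z+\eus R)$ with $z\in\z(\el)$ regular and the limits along rays giving the induced orbit, i.e. $\overline{G{\cdot}\bbk x}\supset\co$. If that citation is shaky, the fallback uses conicity alone. Then $\overline{\bbk^\times G{\cdot}x}$ is a closed conical set, irreducible of dimension $m+1$ when $x\notin\N$. Its intersection with $\N$ is nonempty, contains $0$, and has dimension at least $m$ by the fibre dimension estimate for $\pi$. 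All of this lies in $\overline{\gS}$, whose nilpotent part of dimension $m$ is $\overline{\co}$. So the intersection contains $\co$.
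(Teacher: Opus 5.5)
Your proof is correct, but it obtains the key degeneration by a different route from the paper's.

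\textbf{The paper's route.} It fixes $x\in\co$ with $\g^x\oplus\h=\g$ and takes the Slodowy slice $x+\g^y$ with the contracting one-parameter group $\tilde\lb(t)=t^{-2}\lb(t)$. It then invokes Katsylo's theorem that $\gK=(x+\g^y)\cap\gS_\co$ meets every $G$-orbit of the sheet. For $z\in\tilde\co\cap\gK$, the curve $\tilde\lb(t){\cdot}z$ stays in $\gK$, consists of rescaled points of $\tilde\co$, and has $x$ in its closure. So the open condition $\g^w\cap\h=\{0\}$ holds along it for all but finitely many $t$.

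\textbf{Your route.} You replace Katsylo's section by the general fact that $\co\subset\overline{\bbk^\times G{\cdot}x}$ for every $x\in\gS$, combined with openness, conicity and $G$-stability of the good set. Your fallback proof of that fact is the one that actually carries the argument. The Borho--Kraft sketch is garbled, and it would in any case need the same identification of the induced orbit with $\co$. The fallback runs as follows. $Y=\overline{\bbk^\times G{\cdot}x}$ is irreducible of dimension $m+1$ for $x\notin\N$. $\pi$ maps it dominantly onto a curve through $\pi(0)$. So $Y\cap\N$ has dimension $\ge m$, and therefore contains a nilpotent orbit of dimension exactly $m$ inside $\overline{\gS}$.

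\textbf{Two small repairs.} First, to finish Step 2 you should state explicitly that $\overline{\gS}\cap\g^{(m)}=\gS$, since an irreducible component of $\g^{(m)}$ is closed in $\g^{(m)}$. Then this orbit lies in $\gS$ and equals $\co$ by uniqueness of the nilpotent orbit in a sheet. Your phrase ``nilpotent part of dimension $m$ is $\overline{\co}$'' is too loose as written. Second, Step 1 is simpler than you make it: $\g^x\cap\h=\Ker(\ad x\vert_\h)$, so the good set is open in all of $\g$ by a rank condition, and no Grassmannian is needed.

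\textbf{What each route buys.} Yours is more elementary and self-contained. It uses only that sheets are conical irreducible components of $\g^{(m)}$ with a unique nilpotent orbit, plus the fibre-dimension theorem. It also shows that any $G$-stable conical open subset of $\gS$ containing $\co$ is all of $\gS$. The paper's route depends on Katsylo's deeper structure theorem, but in exchange gives an explicit degeneration inside the Slodowy slice to the prescribed point $x$.
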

\begin{proof}
For $x\in\co$, take an $\tri$-triple $\{x,h,y\}$~\cite[Chap.\,3]{CM}. Let $\lb:\bbk^*\to G$ be the 1-parameter subgroup such that
$\textsl{d}\lb(1)=h$. Then $\lb(t){\cdot}x=t^2x$ and $\lb(t){\cdot}y=t^{-2}y$. The eigenvalues of $h$ on 
$\g^y$ are non-positive. Therefore, the $1$-parameter group $t\mapsto\tilde\lb(t)=t^{-2}\lb(t)\subset GL(\g)$ has 
negative weights on $\g^y$ and also $\tilde\lb(t){\cdot}x=x$. Hence $\tilde\lb(t)$ acts on the affine space
$x+\g^y$ contracting everything to $x$, i.e., for any $z\in x+\g^y$, the closure of 
$\tilde\lb(\bbk^*){\cdot}z$ contains $x$.

According to Katsylo~\cite{kats82}, $\gK:=(x+\g^y)\cap\gS_\co$ is a conical  section of $\gS_\co$. Let 
$\tilde\co\subset\gS_\co\setminus \co$ be another $G$-orbit and $z\in \tilde\co\cap\gK$. Since 
$x\in\ov{\tilde\lb(t){\cdot}z}$ and the $G$-orbits $G{\cdot}(\tilde\lb(t){\cdot}z)=t^{-2}G{\cdot}z$ are 
isomorphic, one has $\tilde\lb(t){\cdot}z\in\gK$ for all $t\in\bbk^*$. If $\h$ is a complementary subalgebra 
for $\g^x$, then $\g^{\tilde\lb(t){\cdot}z}\cap\h=\{0\}$ for all but finitely many $t\in\bbk^*$. Since the 
orbits $G{\cdot}(\tilde\lb(t){\cdot}z)$, $t\ne 0$, are isomorphic, all of them are strange. This also shows that
$\h$ a complementary subalgebra for all orbits in $\gS_\co$.
\end{proof}

\begin{rmk}    \label{rem:1-PS-known}
The affine space $x+\g^y$ and 1-parameter group $\tilde\lb$ first appeared in the 
seminal paper of Kostant~\cite{ko63} for $x\in\N_{\sf reg}=\co_{\sf pr}$. Then such a transversal space was considered for arbitrary 
nilpotent orbits in $\g$ and even more general $G$-modules, see 
e.g. \cite{kats82},\,\cite[7.4]{slod},\,\cite[\S\,8.8]{vp}. 
Nowadays, $x+\g^y$ is usually called a {\it Slodowy slice} for the orbit $G{\cdot}x=G{\cdot}y$.
\end{rmk}

\begin{ex}    \label{ex:sheet-sl}
{\sf 1.} If $x$ is a principal nilpotent element of $\gln$, then $(\gln)^x=\lg I,x,\dots,x^{n-1}\rg$.  
Assume that $x=\sum_{j=1}^{n-1} e_{j,j+1}$, where $e_{i,j}$ is the usual matrix unit. Let $\h$ be the 
subalgebra of $\gln$ such that either the first row or the last column is zero. Then $(\gln)^x\oplus\h=\gln$ 
and thereby $GL_n{\cdot}x$ is strange. We generalise this observation in Section~\ref{sect:str-glv}.
Next, take $z=e_{n,1}+x$. Then $z$ is regular semisimple, i.e., 
$z$ belongs to the open sheet $(\gln)_{\sf reg}$, and the same $\h$ is also a complementary subalgebra for $(\gln)^z$.

{\sf 2.} If $x$ is considered as element of $\sln$, then the construction of $\h$ has to be modified. Here
$(\sln)^x=\lg x,\dots,x^{n-1} \rg$ and the complementary subalgebra is $\tilde\h=(\h\oplus \bbk I)\cap\sln$, which is a parabolic subalgebra of $\sln$ of maximal dimension. 
\end{ex}

\begin{qtn}    \label{qtn:3}
Let $\co$ be a strange non-nilpotent orbit in $\g$. Is it true that the nilpotent orbit of the sheet containing 
$\co$ is strange as well?
\end{qtn}

Let $B$ be a Borel subgroup of $G$. Following Vinberg, we say that the {\it complexity\/} of an irreducible 
$G$-variety $X$ is $c_G(X):=\dim X-\max_{x\in X}\dim B{\cdot}x$. By the Rosenlicht theorem, one has 
$c_G(X)=\trdeg \bbk(X)^B$. Then $X$ is said to be {\it spherical}, if $c_G(X)=0$, i.e., $B$ has a dense 
orbit in $X$. We only use this notion for $G$-orbits in $\g$. If $\g$ is simple, then a necessary 
condition of sphericity for $\co=G{\cdot}x$ is that the centraliser of $\g^x$ in $\g$  is one-dimensional.

\begin{thm}  \label{thm:c0}
If $\co$ a spherical $G$-orbit in $\g$, then $\co$ is strange. Moreover, there is a \emph{solvable} 
complementary subalgebra for $\co$. 
\end{thm}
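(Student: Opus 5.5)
Since $B$ has a dense orbit in $\co$, there is $x\in\co$ with $B{\cdot}x$ open in $\co$. This is equivalent to $\be+\g^x=\g$. If in addition $\be\cap\g^x=\{0\}$, then $\be$ itself would be a complementary subalgebra. In general this fails, because $\dim\be$ may exceed $\dim\co$. So the natural plan is to look for the complement \emph{inside} $\be$: a subalgebra $\h\subset\be$ with $\h\oplus(\be\cap\g^x)=\be$. Then $\h\cap\g^x=\{0\}$ and $\h+\g^x=\be+\g^x=\g$. Hence $\g^x\oplus\h=\g$, and $\h$ is solvable because $\be$ is. The problem is thus reduced to a statement about the generic stabiliser $\be^x=\be\cap\g^x$ of the $B$-action on $\co$: it should admit a complementary subalgebra in $\be$.

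To construct such an $\h$, I would use the structure theory of generic stabilisers of $B$ on spherical varieties, in the form given by the theory of doubled actions. For a spherical $G$-variety $X$, the generic stabiliser $B_*$ of $(B{:}X)$ has a standard description. There is a parabolic $P\supset B$, the stabiliser of the open $B$-orbit, with Levi decomposition $P=L\ltimes P^u$. By the local structure theorem, $P^u$ acts freely on the open $B$-orbit. The generic stabiliser then satisfies $B_*\subset L\cap B=T\cdot(U\cap L)$, with $B_*\supset [L,L]\cap B$ up to a torus part. Concretely, $\be^x=\te_0\oplus(\be\cap[\el,\el])$, where $\te_0\subset\te$ is the generic stabiliser of $T$ on the corresponding $T$-orbit, and $\te_0$ has codimension equal to the rank of $X$. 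Given this description, I would take
\[
\h=\te_1\oplus\p^{u}\oplus(\text{nothing from } [\el,\el]\cap\be),
\]
where $\te_1$ is a complement to $\te_0$ in $\te$. More precisely, $\h=\te_1\oplus\p^{u}$ with $\p^u=\Lie P^u$. This is a subalgebra of $\be$ because $\te$ normalises $\p^u$ and $[\te_1,\te_1]=0$. It is solvable and has dimension $\dim\be-\dim\be^x$. Moreover $\h\cap\be^x=\{0\}$, since $\be^x\subset\el$ meets $\p^u$ trivially and $\te_1\cap\te_0=0$. I would also double-check that $\be^x$ lies in $\te\oplus(\be\cap[\el,\el])$ with this exact splitting. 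Alternatively, I could verify directly that $\dim\h=\dim\co$ and $\h\cap\g^x=0$, for instance by showing that $H$ has an open orbit in $\co$, using that $P^u$ acts freely and $T_1$ acts with finite stabilisers on the quotient torus.

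The main obstacle is justifying the structure of $\be^x$ for spherical adjoint orbits. This means showing that $x$ can be chosen in the open $B$-orbit so that $\be\cap\g^x$ contains no part of $\p^u$ and splits compatibly with $\te\oplus\be\cap[\el,\el]$. I would first try the local structure theorem of Brion–Luna–Vust applied to $\co$, which gives a $P$-stable open subset $\cong P^u\times Z$ with $L$ acting on $Z$ through a torus quotient. That yields exactly $B_*=(T\text{-part})\cdot([L,L]\cap B)$. If this becomes too delicate, the fallback is the known classification of spherical nilpotent orbits (Panyushev), and Theorem~\ref{thm:sheets} would extend the result to all spherical orbits. Spherical orbits lie in sheets whose nilpotent orbit is spherical, and a complement for the nilpotent orbit serves all orbits of the sheet. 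So it would be enough to treat nilpotent spherical orbits, where $\be^x$ can be read off from the explicit height $\le3$ grading.
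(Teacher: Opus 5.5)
Your proof is correct and follows the same skeleton as the paper's. Both use $\g^x+\be=\g$ to reduce the problem to finding a complement of $\be^x$ inside $\be$. Both then take that complement to be a complementary torus plus the nilradical of a parabolic subalgebra containing $\be$: the paper's $\tilde\be$ is exactly your $\te_1\oplus\p^{\sf nil}$, with $\el=\g^t$.

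The only genuine difference is where the structure of $\be^x$ comes from.

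\begin{itemize}
\item The paper uses its theory of doubled actions. For a suitable generic $x$ in \emph{any} orbit, $S=G^x\cap\vartheta(G^x)$ satisfies $(G^t,G^t)\subset S\subset G^t$ for some dominant $t\in\te$, and $(B^x)^0$ is a Borel subgroup of $S^0$. So the splitting $\be=\be^x\oplus\tilde\be$ holds with no sphericity hypothesis. Sphericity enters only at the last step, through $\g^x+\be=\g$. This is why the same construction can be reused in Proposition~\ref{prop:strange:c=1}, where the orbit has complexity one.
\item You get the same structure from the Brion--Luna--Vust local structure theorem for spherical homogeneous spaces. That is more standard and avoids the doubled-actions machinery.
\end{itemize}

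Two points to tidy up.

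\begin{itemize}
\item The splitting should read $\be^x=\te_0\oplus([\be,\be]\cap\el)$, with $\te_0=\te\cap\el^x$ and $\el^x=\el\cap\g^x$. Since $\el^x\supset[\el,\el]$, the torus $\te_0$ already contains $\te\cap[\el,\el]$, so your $\te_0\oplus(\be\cap[\el,\el])$ is not a direct sum. Your conclusion $\be^x\oplus(\te_1\oplus\p^{\sf nil})=\be$ is unaffected.
\item You need the Levi subgroup $L$ from the local structure theorem to contain $T$. You can arrange this by conjugating $L$ and the base point by a suitable element of the unipotent radical of $B$, which lies in $P$.
\end{itemize}

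The fallback via Theorem~\ref{thm:sheets} and the classification of spherical nilpotent orbits is not needed.
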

\begin{proof}
Let us recall rudiments of the theory of doubled actions~\cite[Ch.\,1]{disser}. Fix a Borel subgroup 
$B\subset G$ and a maximal torus $T\subset B$. Let $\Pi$ denote the set of simple roots \wrt\  
$(B,T)$. We say that $t\in\te=\Lie T$ is dominant, if $\ap(t)\ge 0$ for all $\ap\in\Pi$. Let $\vartheta$ be
an involution of $G$ such that $\vartheta(s)=s^{-1}$ for all $s\in T$. For any $G$-orbit $\co$ and the 
action $(B:\co)$, there is a generic point    
$x\in\co$ such that the stabilisers $G^x$ and $B^x$ have a number of good properties:
\begin{itemize}
\item $S:=G^x\cap \vartheta(G^x)$ is reductive (possibly non-connected);
\item there is a dominant $t\in \te$ such that $(G^t,G^t)\subset S\subset G^t$;
\item $(B^x)^0=(B\cap S)^0$ is a Borel subgroup of $S^0$;
\item $(S\cap T)^0$ is a maximal torus in $S^0$.
\end{itemize}
Let $\Pi(\es)$ be the set of simple roots of $\es$ \wrt\ $(\be\cap\es, \te\cap\es)$. Then
$\Pi(\es)=\{\ap\in\Pi\mid \ap(t)=0\}$.
Therefore, there is a $T$-stable subalgebra $\tilde\be\subset \be$ such that $\be^x\oplus\tilde\be=\be$. Since $t$ is dominant, $\p:=\g^t+\be$ is a (parabolic) subalgebra of $\g$ and $\tilde\be\cap [\be,\be]$ is the nilradical of $\p$.

If $\co$ is spherical, then $B{\cdot}x$ is dense in $\co$. Therefore, $\g^x+\be=\g$ and hence 
$\g^x\oplus\tilde\be=\g$.
\end{proof}

\begin{rmk}    \label{rmk:sph-sheet}
For any sheet $\gS\subset\g$, all orbits in $\gS$ have the same complexity~\cite[Prop.\,4.5.23]{disser}.
In particular, if $\co\subset\N$ is spherical, then all orbits in $\gS_\co$ are spherical and thereby strange. 
The latter also follows from Theorem~\ref{thm:sheets}. 
However, if $\co$ is strange but not spherical, then a complementary subalgebra for 
$\co$ cannot be solvable. 
\end{rmk}

Recall that an orbit $\co\subset\N$ is said to be {\it rigid}, if $\co$ itself is a sheet. Then $\gS_\co=\co$ is the only 
sheet containing $\co$ and its rank is $0$. At the other extreme, $\co$ is said to be {\it Richardson} (or 
{\it polarisable}), if it is contained in a Dixmier sheet. (We refer to~\cite[Chap.\,7]{CM} for generalities on 
these classes of orbits.)
There exist also sheets of positive rank that are not 
Dixmier. In view of Theorems~\ref{thm:sheets} and~\ref{thm:c0}, it is of interest to know what spherical 
nilpotent orbits are not rigid. 

\begin{prop}      \label{prop:dichotomy}
Let $\g$ be a simple Lie algebra.
\begin{itemize}
\item[\sf (1)] \ If\/ $\co=G{\cdot}x\subset\g$ is spherical, then $x$ is either nilpotent or semisimple. 
\item[\sf (2)] \ If\/ $\co\subset\N$ is spherical, then it is either rigid or Richardson. 
\item[\sf (3)] \ if\/ $\gS_\co$ is a Dixmier sheet such that $\co$ is spherical, then 
$\rk\gS_\co=1$.
\end{itemize}
\end{prop}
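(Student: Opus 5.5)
The plan is to reduce everything to the structure of spherical homogeneous spaces $G/G^x$ and the known classification of spherical Levi subgroups. Throughout, write $x=s+n$ for the Jordan decomposition and set $L:=G^s$, a Levi subgroup of some parabolic $P=L\ltimes U$. Then $G^x=L^n$, and
\[
G{\cdot}x\simeq G\times_L(L{\cdot}n)=G\times_P\bigl(P\times_L (L{\cdot}n)\bigr)\simeq G\times_P(U\times L{\cdot}n).
\]
Using the standard fact $c_G(G\times_P Z)=c_L(Z)$ for a $P$-variety $Z$ on which $U$ acts through $P\to L$ (after the usual identification $U\simeq\mathfrak u$ as $L$-varieties), I expect the formula
\[
c_G(G{\cdot}x)=c_L(\mathfrak u\times L{\cdot}n).
\]
Making this identification precise, i.e. checking that the $U$-part really contributes only the $L$-module $\mathfrak u$ to the complexity, is the first technical step.

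For (1), suppose $s$ is non-central (so $L\ne G$) and $n\ne 0$. Sphericity of $G{\cdot}x$ forces $c_L(\mathfrak u)=0$, i.e. $G/L$ is spherical. For simple $\g$, the spherical Levi subgroups are known (Brion, Krämer): either $\mathfrak u$ is abelian (Hermitian symmetric type), or $\g=\mathfrak{sp}_{2m}$ with $L=GL_1\times Sp_{2m-2}$. In each case I would compute a generic stabiliser $B_L^\ast$ of $B_L$ acting on $\mathfrak u$, and then show that $B_L^\ast$ has no dense orbit on any nonzero nilpotent $L$-orbit $L{\cdot}n$. Comparing $\dim B_L^\ast$ with the minimal dimension of a nonzero nilpotent orbit of $[\el,\el]$ should already suffice in most cases. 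This case-by-case comparison is the step I expect to be the main obstacle. To shorten it, I would first try the necessary condition that the centraliser of $\g^x$ be one-dimensional: since $s$ and $n$ both centralise $\g^x$, this gives $\lg s\rg=\lg n\rg$, which is absurd. The only gap is to show that $n$ really lies in the centre of $\g^x$. If that gap can be closed, (1) follows without the classification.

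For (2), use Panyushev's criterion that $\co\subset\N$ is spherical if and only if its height is at most $3$. Together with the explicit list of spherical nilpotent orbits (small partitions in the classical types and a few orbits in the exceptional ones), one checks against the known characterisation of rigid and Richardson orbits (see \cite[Chap.\,7]{CM}) that every non-rigid orbit on the list is Richardson. For the classical types this is a check on partitions via induction from Levi subalgebras; for the exceptional types it is a table lookup.

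For (3), Remark~\ref{rmk:sph-sheet} shows that all orbits in $\gS_\co$ are spherical. In particular the semisimple orbits $G{\cdot}s$ in this Dixmier sheet, which have $G^s=L$, are spherical, so $G/L$ is spherical. Since $\rk\gS_\co=\dim\z(\el)$, it remains to see from the same classification of spherical Levi subgroups that $\dim\z(\el)=1$. This can also be seen directly: if $\dim\z(\el)\ge2$, the $\z(\el)$-weights on $\mathfrak u$ are too many for a Borel of $L$ to have an open orbit on $\mathfrak u$.
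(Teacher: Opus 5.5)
Your fallback argument for (1) is the paper's proof, and it is already complete: the ``gap'' you flag does not exist. Since $\ad x_s$ and $\ad x_n$ (your $s$, $n$) are polynomials without constant term in $\ad x$, one has $\g^x=\g^{x_s}\cap\g^{x_n}$. So $x_s$ and $x_n$ lie in $\g^x$ and commute with all of it, i.e.\ both belong to the centraliser $\z(\g^x)$ of $\g^x$ in $\g$. If both are nonzero they are linearly independent, because a nonzero semisimple element is not a multiple of a nilpotent one. Hence $\dim\z(\g^x)\ge 2$, and $G{\cdot}x$ is not spherical.

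Drop your primary route via $c_L(\mathfrak u\times L{\cdot}n)$. Its case analysis is never carried out, and the classification it relies on is incomplete. Besides the Levi subgroups with abelian nilradical and $GL_1\times Sp_{2m-2}\subset Sp_{2m}$, the spherical Levi subgroups also include $GL_n\subset SO_{2n+1}$, whose nilradical $\bbk^n\oplus\wedge^2\bbk^n$ is not abelian. That case would be missed.

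For (2) and (3) your classification routes are not wrong in principle, but they are only announced. In (2) you promise to compare the list of spherical nilpotent orbits with the partition criteria for rigidity in \cite[Chap.\,7]{CM} and with the exceptional tables, without doing it. In (3) you rely either on the misquoted list of spherical Levis, or on a weight-counting heuristic you do not make precise. The conclusion of (3) does survive the misquote, since every spherical Levi, including $GL_n\subset SO_{2n+1}$, is the Levi of a maximal parabolic.

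The missing observation is that, once (1) is proved via the centraliser criterion, both (2) and (3) are immediate and need no tables.

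For (2), use Remark~\ref{rmk:sph-sheet}, which you already invoke in (3): every orbit of any sheet $\gS\supset\co$ is spherical, hence by (1) consists of nilpotent or semisimple elements. A sheet contains only one nilpotent orbit. So if some sheet $\gS$ through $\co$ satisfies $\gS\ne\co$, it contains semisimple orbits, i.e.\ it is Dixmier and $\co$ is Richardson. If no sheet through $\co$ is larger than $\co$, then $\co$ is itself a sheet, i.e.\ rigid.

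For (3), the semisimple orbits $G{\cdot}s\simeq G/L$ in $\gS_\co$ are spherical by the same remark. Here $\g^s=\el$, and the centraliser of $\el$ in $\g$ is $\z(\el)$. The centraliser criterion then gives $\rk\gS_\co=\dim\z(\el)=1$.

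The classification route would at best yield explicit lists of which spherical orbits are rigid or Richardson. For the proposition itself it buys nothing over this uniform argument.
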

\begin{proof}
(1) Let $x=x_s+x_n$ be the Jordan decomposition, i.e., $[x_s,x_n]=0$ and $\g^x=\g^{x_s}\cap\g^{x_n}$.
If $x$ is neither semisimple nor nilpotent, then the centraliser of $\g^x$ in $\g$ contains both $x_s$ and $x_n$.
Hence its dimension $\ge 2$. Therefore $G{\cdot}x$ is not spherical.

(2) It follows from (1) and Remark~\ref{rmk:sph-sheet} that if $\gS_\co\ne\co$, then 
$\gS_\co\setminus\co$ consists of semisimple elements. In particular, $\gS_\co$ is Dixmier.

(3) Let us prove that, for a Dixmier sheet $\gS_\co$ with $\rk \gS_\co>1$, the semisimple orbits in 
$\gS_\co$ are not spherical. If $\gS_\co$ is a Dixmier sheet, then there is a parabolic subalgebra 
$\p=\el\oplus\p^{\sf nil}$ with a Levi subalgebra $\el$ such that $\ov{\co}=G{\cdot}\p^{\sf nil}$ and 
$\ov{\gS_\co}=\ov{G{\cdot}\z(\el)}$, where $\z(\el)$ is the centre of $\el$. In this case
$\rk\gS_\co=\dim\z(\el)$ and the semisimple orbits in $\gS_\co$ are isomorphic to $G/L$. 
Then $\g^L=\z(\el)\ne 0$ and for a spherical semisimple orbit, one must have $\dim\g^L=1$.
\end{proof}
However, there are Dixmier sheets $\gS$ such that
$\rk\gS=1$, but the orbits in $\gS$ are not spherical.
Let us give more details on the dichotomy in Prop.~\ref{prop:dichotomy}(2). The spherical nilpotent orbits 
in the simple Lie algebras are classified in~\cite[Ch.\,4]{disser}. If $\co=G{\cdot}e\subset\N$,
then $\co$ is spherical if and only if $(\ad\,e)^4=0$~\cite[Theorem\,4.2.6]{disser}. For the exceptional Lie 
algebras, this leads to a short list of such orbits; while for the classical series, this can be stated in terms 
of the partition classification of nilpotent orbits, see~\cite[Chap.\,4.3]{disser}. Then we have
\\ \indent
\textbullet \quad
for $\GR{G}{2},\GR{F}{4},$ and $\GR{E}{8},$ the spherical nilpotent orbits are rigid;
\\  \indent
\textbullet \quad for $\g=\sln$, all nilpotent orbits are Richardson;
\\ \indent
\textbullet \quad in all other types, there are both rigid and Richardson spherical nilpotent orbits. 

\noindent
For instance, if $\g$ is of type $\GR{E}{6}$ (resp. $\GR{E}{7}$), then $2\mathsf A_1$ (resp. 
$3\mathsf A''_1$) is the only spherical Richardson orbit. We refer to \cite[Chap.\,8]{CM} for the standard 
notation on nilpotent orbits in the exceptional Lie algebras. Thus, for a spherical Richardson orbit, one 
obtains the whole Dixmier sheet that consists of strange orbits. 

\begin{rmk}
The sheets of the simple Lie algebras are classified by G.\,Kempken~\cite{kemp} (the classical Lie 
algebras) and A.G.\,Elashvili~\cite{ag85} (the exceptional Lie algebras). (Elashvili's results have earlier 
been advertised by Spaltenstein in~\cite[Chap.\,II, Appendice]{spalt}, and there is also a more recent 
computer verification in~\cite{deG-AG}.)  Using these classifications, one notices that if $\co$ is
spherical and Richardson, then it is contained in a unique (Dixmier) sheet. Together with 
Proposition~\ref{prop:dichotomy}{\sf (2)}, this means that any spherical nilpotent orbit is contained in a unique
sheet.
\end{rmk}
Although spherical nilpotent orbits occur in every simple Lie algebra, there is only one series of
nilpotent orbits $\co$ with $c_G(\co)=1$~\cite[Theorem\,4.5.19]{disser}. Namely, this happens  for $G=SL_n$ ($n\ge 3$) and the orbit $\co(\blb)$ with partition $\blb=(3,1,\dots,1)=(3,1^{n-3})$.

\begin{prop}     \label{prop:strange:c=1}
For any $n\ge 3$, the nilpotent orbit $\co=\co(3,1^{n-3})\subset\g=\sln$ is strange. 
\end{prop}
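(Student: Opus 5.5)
The plan is to write down an explicit complementary subalgebra. The dimension count is the first guide. The dual partition of $(3,1^{n-3})$ is $(n-2,1,1)$, so $\dim(\gln)^e=(n-2)^2+2$ and $\dim(\sln)^e=(n-2)^2+1$. Hence $\dim\co=4n-6$, and we need a subalgebra $\h\subset\sln$ with $\dim\h=4n-6$ and $\h\cap\g^e=\{0\}$. As in Example~\ref{ex:sheet-sl}, it is convenient to work in $\gln$ first: find $\h'\subset\gln$ with $(\gln)^e\oplus\h'=\gln$, and then pass to $\tilde\h=(\h'\oplus\bbk I)\cap\sln$. That passage requires $I\notin\h'$, which we will arrange.

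I would fix coordinates with $e=e_{1,2}+e_{2,3}$ and decompose $\bbk^n=\bbk^3\oplus\bbk^{n-3}$. Block-wise, $(\gln)^e$ consists of
\begin{itemize}
\item $\lg I_3,e,e^2\rg$ in the $\gl_3$-block;
\item all of $\gl_{n-3}$;
\item in the upper-right block $A$ (of size $3\times(n-3)$), the matrices with $eA=0$, i.e. rows $2,3$ zero;
\item in the lower-left block $C$ (of size $(n-3)\times 3$), the matrices with $Ce=0$, i.e. columns $1,2$ zero.
\end{itemize}
The naive complement is then the $6$-dimensional Frobenius subalgebra of $\gl_3$ from Example~\ref{ex:sheet-sl} (matrices with zero first row), together with rows $2,3$ of $A$ and columns $1,2$ of $C$; this has the right dimension $6+2(n-3)+2(n-3)=4n-6$. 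However, it is \emph{not} closed. The product $C\cdot A$ pairs column $2$ of $C$ with row $2$ of $A$ and lands in $\gl_{n-3}$, which lies inside the centraliser. A pure coordinate choice cannot fix this, because two $2$-dimensional subspaces of $\bbk^3$ cannot be mutually orthogonal. So the main obstacle is exactly to choose the off-diagonal parts so that all brackets stay inside $\h'$.

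My first attempt at this obstacle is a better choice of $e$ relative to a flag. I would take $e=e_{1,2}+e_{2,n}$, so that the Jordan chain $v_n\mapsto v_2\mapsto v_1$ uses the first and last basis vectors. I would then look for $\h'$ of \emph{parabolic type}: the nilradical-plus-part-of-Levi of the parabolic subalgebra stabilising the flag $\lg v_1\rg\subset\lg v_1,\dots,v_{n-1}\rg$, trimmed by removing a complement to $\g^e$ inside it. This parabolic has dimension $n^2-2n+2$, and its nilradical has dimension $2n-3$. The missing $2n-3$ dimensions would come from a Frobenius subalgebra of its Levi part. The target is a subalgebra $\h'$ that contains the full nilradical of this parabolic, has a Levi projection that is a Frobenius-type subalgebra meeting $\g^e$ trivially, and does not contain $I$. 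Its closedness is then automatic from the block structure, since the nilradical is an ideal of the parabolic and hence of any subalgebra containing it. The verification of $\h'\cap(\gln)^e=\{0\}$ will be a direct linear check. It reduces to the fact that $(\gln)^e$ projects injectively onto a small set of matrix entries determined by $e$.

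If this ansatz fails, the fallback is to use Theorem~\ref{thm:sheets} and the ambient rank-two Dixmier sheet. The orbit $\co(3,1^{n-3})$ is Richardson, with Levi of type $\gl_{n-2}\times\gl_1\times\gl_1$. I would look for a subalgebra complementary to $\g^z$ for a generic $z$ in a suitable Slodowy slice. By semicontinuity of $\dim(\g^z\cap\h)$ along the contracting $\tilde\lb$-action used in the proof of Theorem~\ref{thm:sheets}, this still only yields strangeness of the semisimple orbits, not of $\co$. So I regard the explicit construction for $\co$ itself as the essential step. A direct inductive reduction from $n$ to $n-1$ would be a further option, in which $\h'_n$ is obtained from $\h'_{n-1}$ by adding one row and one column in the spirit of Example~\ref{ex:sheet-sl}.
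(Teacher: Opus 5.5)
There is a genuine gap. The proposal never actually produces a complementary subalgebra, and the one concrete ansatz you commit to fails. Take $e=e_{1,2}+e_{2,n}$ and let $\mathfrak n$ be the nilradical of the stabiliser of $\lg v_1\rg\subset\lg v_1,\dots,v_{n-1}\rg$. Then $\mathfrak n$ consists of the matrices $X=\sum_{j=2}^{n}a_je_{1,j}+\sum_{i=2}^{n-1}b_ie_{i,n}$, so $e\in\mathfrak n$ and $e^2=e_{1,n}\in\mathfrak n$. In fact $Xe=a_2e_{1,n}$ and $eX=b_2e_{1,n}$. Hence $\mathfrak n\cap(\gln)^e=\{X\in\mathfrak n\mid a_2=b_2\}$, which has dimension $2n-4$. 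So the position you chose puts almost all of $\mathfrak n$ inside the centraliser. Every $\h'\supset\mathfrak n$ meets $(\gln)^e$ in dimension at least $2n-4$, whatever Levi part you add. (A minor slip: this parabolic has dimension $n^2-2n+3$ in $\gln$; $n^2-2n+2$ is its dimension in $\sln$.) Putting $e$ in general position with respect to the flag might remove this particular obstruction. You would still have to exhibit a $(2n-3)$-dimensional subalgebra of $\gl_1\times\gl_{n-2}\times\gl_1$ for which $\h'\cap(\gln)^e=\{0\}$. That is the whole substance of the proposition, and the proposal defers it to ``a direct linear check''. Your fallbacks do not close the gap either. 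As you note yourself, Theorem~\ref{thm:sheets} only carries strangeness from the nilpotent orbit to the rest of its sheet, not back. The inductive step from $n-1$ to $n$ is not specified.

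What is missing is a systematic source for the complement, and the paper obtains it from $c_G(\co)=1$. Take $x\in\co$ generic for the $B$-action, as in the proof of Theorem~\ref{thm:c0}. Then $\be^x=\be\cap\es$ is a Borel subalgebra of the reductive algebra $\es$, and there is a $T$-stable $\tilde\be\subset\be$ with $\be^x\oplus\tilde\be=\be$. Complexity one means exactly $\codim(\g^x+\be)=1$. Hence $\g^x+\p_\ap=\g$ for the minimal parabolic $\p_\ap=\be\oplus\g_{-\ap}$ with $\ap\in\Pi\setminus\Pi(\es)$, and $\tilde\be\oplus\g_{-\ap}$ is a $T$-stable complement to $\g^x$. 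It is closed under the bracket as soon as $\ap$ is orthogonal to $\Pi(\es)$. The computation $\es\simeq\gl_{n-4}$ with $\Pi(\es)=\{\ap_3,\dots,\ap_{n-3}\}$ (empty for $n=4,5$) shows that $\ap=\ap_1$ or $\ap=\ap_{n-1}$ works. The case $n=3$ is the principal case of Example~\ref{ex:sheet-sl}. If you prefer to stay with explicit matrices, this suggests placing $x$ in generic position relative to a full flag, rather than adapting the flag to its Jordan basis, and searching for the complement inside $\be\oplus\g_{-\ap_1}$.
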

\begin{proof} If $n=3$, then $\co$ is the principal nilpotent orbit, and we refer to Example~\ref{ex:sheet-sl}.

Assume that $n\ge 4$. Let $x\in\co$ be a generic point satisfying properties in the proof of 
Theorem~\ref{thm:c0}. That is, $S:=G^x\cap \vartheta(G^x)$ is reductive, etc.
Since $c_G(\co)=1$, we have $\codim(\g^x+\be)=1$. This readily implies that there is 
$\ap\in\Pi$ such that $\g^x+\p_\ap=\g$, where $\p_\ap:=\be\oplus\g_{-\ap}$ is the minimal parabolic 
subgroup corresponding to $\ap\in\Pi$. Actually, one can take any $\ap\in\Pi\setminus\Pi(\es)$. 
Then $(\p_\ap)^x=\g^x\cap\p_\ap=\g^x\cap\be=\be^x$  has all good properties described in 
Theorem~\ref{thm:c0}. By the construction, $\g_{-\ap}\not\subset \g^x$. Hence 
$\g_{\ap}\not\subset \g^x$ as well. (Otherwise, we would obtain $\g_\ap\subset \be\cap\g^x\subset\es$.) 
Take $\tilde\be\subset\be$ as in Theorem~\ref{thm:c0}. Then $\tilde\h:=\tilde\be\oplus\g_{-\ap}$ is a 
$T$-stable complement to $\g^x$ in $\g$. It remains to show that $\tilde\h$ is a subalgebra. It is easily seen that it suffices to have the property that $\ap$ is orthogonal to $\Pi(\es)$.

Using the algorithm for computing $\es$ for nilpotent orbits~\cite[Ch.\,4]{disser}, one shows that 
$\es\simeq\mathfrak{gl}_{n-4}$ for $n\ge 4$. Hence $\Pi(\es)=\varnothing$ for $n=4,5$, whereas
$\Pi(\es)=\{\ap_3,\dots,\ap_{n-3}\}$ for $n\ge 6$
(with the usual numbering of the simple roots of $\sln$).
Therefore, one can always take $\ap$ to be either $\ap_1$ or $\ap_{n-1}$.
\end{proof}

It follows that the Dixmier sheet in $\sln$ containing $\co(3,1^{n-3})$ consists of strange orbits. Thus, combining
Theorems~\ref{thm:sheets} \& \ref{thm:c0}, and Proposition~\ref{prop:strange:c=1}, we obtain

\begin{thm}   \label{thm:c1}
If $G$ is simple and $\co\subset\g$ is a $G$-orbit with $c_G(\co)\le 1$, then $\co$ is strange.
\end{thm}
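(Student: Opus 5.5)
The plan is to split according to the value of $c_G(\co)\in\{0,1\}$ and, for complexity $1$, reduce to the nilpotent case via sheets. If $c_G(\co)=0$, then $\co$ is spherical and Theorem~\ref{thm:c0} already gives a (solvable) complementary subalgebra, so $\co$ is strange. No further work is needed here. The trivial orbit $\co=\{0\}$ is also harmless: $\h=\{0\}$ is a complement to $\g^0=\g$.

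Now let $c_G(\co)=1$ and $\co=G{\cdot}x$. Take a sheet $\gS$ containing $\co$, and let $\co_0$ be the unique nilpotent orbit in $\gS$. By \cite[Prop.\,4.5.23]{disser}, quoted in Remark~\ref{rmk:sph-sheet}, all orbits in a sheet have the same complexity, so $c_G(\co_0)=1$. By the classification \cite[Theorem\,4.5.19]{disser}, a nilpotent orbit of complexity $1$ in a simple Lie algebra exists only for $\g=\sln$ with $n\ge3$, and it is $\co_0=\co(3,1^{n-3})$. This forces $\g=\sln$ and $\co_0=\co(3,1^{n-3})$.

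By Proposition~\ref{prop:strange:c=1}, $\co_0$ is strange. Applying Theorem~\ref{thm:sheets} to $\co_0$ and the sheet $\gS_{\co_0}=\gS$ then shows that every orbit in $\gS$ is strange, in particular $\co$. In $\sln$ sheets are disjoint, so $\gS$ is exactly the sheet of $\co_0$ and no ambiguity arises. In any case, Theorem~\ref{thm:sheets} applies to any sheet containing $\co_0$.

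The main point to verify is the passage from $\co$ to its nilpotent orbit, namely that complexity is constant along sheets (including on non-semisimple, non-nilpotent orbits). I would rely on the cited result from \cite{disser} rather than reprove it. If one preferred a self-contained argument, one could use that every orbit in $\gS$ degenerates to $\co_0$ via the Katsylo section and the contracting action of $\tilde\lb$ from the proof of Theorem~\ref{thm:sheets}. Upper semicontinuity of $\max\dim B{\cdot}z$ under this degeneration gives $c_G(\co_0)\ge c_G(\co)$, and $c_G(\co_0)\le1$ would still need the equality. So citing the constancy is the cleanest route.
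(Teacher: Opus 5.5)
Your proposal is correct and is essentially the paper's argument. The spherical case is Theorem~\ref{thm:c0}. For complexity $1$, constancy of complexity along sheets \cite[Prop.\,4.5.23]{disser} and the classification \cite[Theorem\,4.5.19]{disser} reduce everything to the sheet of $\co(3,1^{n-3})\subset\sln$, which Proposition~\ref{prop:strange:c=1} and Theorem~\ref{thm:sheets} then handle; the paper leaves this reduction implicit via Remark~\ref{rmk:sph-sheet}, and you spell it out correctly.
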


At the moment, non-spherical strange orbits are known only in $\sln$ (Example~\ref{ex:sheet-sl} and
Prop.~\ref{prop:strange:c=1}). In Sections~\ref{sect:str-glv} \& \ref{sect:two-col}, we shall discover 
some other non-spherical strange orbits in $\sln$. But in spite of all efforts, I was unable to detect
non-spherical strange orbits in the other simple Lie algebras.

\subsection{Some numerology}     \label{subs:numero}
Let $\g$ be  a simple Lie algebra. Set
\begin{itemize}
\item $\gM_{\sf sph}=\gM_{\sf sph}(\g):=\max\{\dim\co \mid \co \text{ is a spherical $G$-orbit in $\g$}\}$;
\item $\gM_{\sf str}=\gM_{\sf str}(\g)=\max\{\dim\co \mid \co \text{ is a strange $G$-orbit in $\g$}\}$;
\item $\gM_{\sf Fr}=\gM_{\sf Fr}(\g)=\max\{\dim\h \mid \h \text{ is a Frobenius subalgebra of }\g\}$.
\end{itemize}
By Corollary~\ref{cor:3-cases}{\sf (iii)} and Theorem~\ref{thm:c0}, we have 
$\gM_{\sf sph}\le \gM_{\sf str}\le \gM_{\sf Fr}$. It is also known that 
$\gM_{\sf sph}=\dim\be-\ind\be$~\cite[6.4]{aif99}. For $\sln$, one has $\gM_{\sf str}= \gM_{\sf Fr}=n^2-n$, 
which is the maximal dimension of arbitrary the $SL_n$-orbits in $\sln$ and arbitrary subalgebras of 
$\sln$.

\begin{prob}
 Determine $\gM_{\sf str}$ and $\gM_{\sf Fr}$ for all simple $\g$.
\end{prob}
One can verify directly that $\gM_{\sf str}= \gM_{\sf Fr}=\dim\be$ for $\GR{B}{2}$, $\GR{B}{3}$,
$\GR{C}{3}$, and $\GR{G}{2}$. We discuss this and some related conjectures in 
Section~\ref{sect:problems}.

\section{On strange orbits in $\g=\gln$}   
\label{sect:str-glv}

\noindent
If $e\in\sln$ is nilpotent, then $\blb(e)=\blb=(\lb_1,\lb_2,\dots)$ denote the corresponding partition of 
$n$, i.e., $\{\lb_i\}$ are the sizes of blocks in the Jordan normal form of $e$. Conversely, if $\blb$ is a partition of $n$, then $\co(\blb)$ is the corresponding nilpotent $SL_n$-orbit. Let 
$\widehat\blb=(\mu_1,\dots, \mu_m)$ denote the conjugate (dual) partition. Then 
$GL_n{\cdot}e=SL_n{\cdot}e$, 
$\dim SL_n{\cdot}e=n^2-\sum_{j=1}^m \mu_i^2$, and 
$\dim(\sln)^e=(\sum_{j=1}^m \mu_i^2)-1$, cf.~\cite[Chap.\,6.1]{CM}.
For the sake of completeness, we state the following simple observation.
\begin{lm}    \label{lm:sl-vs-gl}
If $x\in \sln$, then the orbit $GL_n{\cdot}x$ is strange if and only if $SL_n{\cdot}x$ is. If\/ $\h\subset\gln$ is a complementary subalgebra for $(\gln)^x$, then $(\h\oplus\bbk I)\cap\sln$ is complementary for $(\sln)^x$. Conversely, if\/ $\h\oplus (\sln)^x=\sln$, then $\h\oplus (\gln)^x=\gln$ as well.
\end{lm}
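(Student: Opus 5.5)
The argument is linear algebra around the decomposition $\gln=\sln\oplus\bbk I$ and the fact that $(\gln)^x=(\sln)^x\oplus\bbk I$. This holds for every $x\in\sln$ because $I$ is central and has nonzero trace. Both halves of the equivalence follow from the two explicit constructions, so I will prove those constructions directly.

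\emph{From $\gln$ to $\sln$.} Let $\h\subset\gln$ satisfy $\h\oplus(\gln)^x=\gln$, and set $\tilde\h=(\h\oplus\bbk I)\cap\sln$.
\begin{itemize}
\item The sum $\h+\bbk I$ is direct, since $I\in(\gln)^x$ and $\h\cap(\gln)^x=\{0\}$.
\item $\h\oplus\bbk I$ is a subalgebra because $I$ is central. Hence $\tilde\h$ is an intersection of subalgebras, and so a subalgebra.
\item For dimensions: $\dim\h=n^2-\dim(\gln)^x=n^2-1-\dim(\sln)^x$. The trace is nonzero on $I$, so it is a nonzero functional on $\h\oplus\bbk I$. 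Therefore $\dim\tilde\h=\dim\h+1-1=\dim\h=\dim\sln-\dim(\sln)^x$.
\item For the trivial intersection: suppose $z\in\tilde\h\cap(\sln)^x$. Write $z=h+cI$ with $h\in\h$. Then $h=z-cI\in(\gln)^x\cap\h=\{0\}$, so $z=cI$. Since $\mathrm{tr}\,z=0$, we get $c=0$ and $z=0$.
\end{itemize}
Together, the dimension count and the trivial intersection give $\tilde\h\oplus(\sln)^x=\sln$.

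\emph{From $\sln$ to $\gln$.} Let $\h\subset\sln$ satisfy $\h\oplus(\sln)^x=\sln$. Then
\[
\h+(\gln)^x=\h+(\sln)^x+\bbk I=\sln\oplus\bbk I=\gln .
\]
Also $\h\cap(\gln)^x\subset\sln\cap(\gln)^x=(\sln)^x$, and this meets $\h$ trivially. So $\h\oplus(\gln)^x=\gln$, and $\h$ is still a subalgebra of $\gln$.

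These two constructions give the equivalence between $GL_n{\cdot}x$ being strange and $SL_n{\cdot}x$ being strange. Strangeness of an orbit needs a complement at some point $\xi$ of the orbit, and $GL_n{\cdot}x=SL_n{\cdot}x$ for $x\in\sln$, so the same points serve for both groups.

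There is no real obstacle here. The only point needing care is the dimension count in the first direction, which rests on the trace being nonzero on $I$.
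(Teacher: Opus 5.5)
Your proof is correct and complete: the decomposition $(\gln)^x=(\sln)^x\oplus\bbk I$, the dimension count using the trace functional on $\h\oplus\bbk I$, the trivial-intersection check, and the observation $GL_n{\cdot}x=SL_n{\cdot}x$ are exactly what is needed. The paper states this lemma as a ``simple observation'' and gives no proof, and your argument is precisely the routine linear-algebra verification it leaves to the reader.
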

Because of this lemma, we can freely switch between strange orbits and complementary subalgebras for $\sln$ and $\gln$.

Let $\tilde e$ be a principal nilpotent element of $\gln$, hence $\blb(\tilde e)=(n)$. If $\tilde e^k\in\gln$ is the usual matrix power of $\tilde e$ and $n=kl+q$ with $0\le q<k$, then 
\[
\blb(\tilde e^k)=(\underbrace{l+1,\dots ,l+1}_{q},\underbrace{l,\dots, l}_{k-q})=:
((l+1)^q, l^{k-q}) .
\] 
Hence $\widehat\blb(\tilde e^k)=(k^l,q)$ and $\dim GL_n{\cdot}\tilde e^k=n(n-k)+q(k-q)$.

\begin{thm}         \label{thm:strange-gln}
For any $k\ge 1$, the nilpotent orbit $GL_n{\cdot}\tilde e^k$ is strange and a complementary
subalgebra $\h\subset\gln$ can be chosen as in Fig.~\ref{fig:h-e}.
\end{thm}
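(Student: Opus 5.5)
Write $x=\tilde e^k=\sum_{j}e_{j,j+k}$ for the principal nilpotent $\tilde e=\sum_{j=1}^{n-1}e_{j,j+1}$. I will exhibit an explicit subalgebra $\h\subset\gln$ with $\h\cap(\gln)^x=\{0\}$ and $\dim\h=\dim GL_n{\cdot}x=n(n-k)+q(k-q)$, where $n=kl+q$ with $0\le q<k$. Then $\h\oplus(\gln)^x=\gln$, and Lemma~\ref{lm:sl-vs-gl} transfers the statement to $\sln$. We may assume $l\ge1$: if $k\ge n$, then $x=0$, which is excluded or trivial.

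\emph{Step 1 (structure of the centraliser).} For $A\in(\gln)^x$, compare rows in $Ax=xA$. Row $i$ of $xA$ is row $i{+}k$ of $A$ (or $0$ if $i>n-k$), and $(Ax)_{i,j}=A_{i,j-k}$. This gives two facts. First, $A_{i+k,j+k}=A_{i,j}$ and $A_{i+k,j}=0$ for $j\le k$. Second, for $i>n-k$, row $i$ of $A$ vanishes on columns $1,\dots,n-k$. In particular, $A$ is determined by its first $k$ rows, i.e.\ $A\mapsto(\text{first }k\text{ rows})$ is injective on $(\gln)^x$.

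Now take $q<i\le k$ and $1\le j\le q$. The residue class of $i$ gives a chain of length $l$, so $i+(l-1)k>n-k$. The residue class of $j$ gives a chain of length $l+1$, so $j+(l-1)k\le n-k$. Hence
\[
A_{i,j}=A_{i+(l-1)k,\,j+(l-1)k}=0 .
\]

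\emph{Step 2 (the subalgebra).} Let $\h_0=\{A\mid \text{the first }k\text{ rows of }A\text{ are }0\}$, the set of matrices with image in $\lg e_{k+1},\dots,e_n\rg$. This is a subalgebra of dimension $n(n-k)$. Put $I=\{q{+}1,\dots,k\}$, $J=\{1,\dots,q\}$, and
\[
\h=\h_0\oplus\lg e_{i,j}\mid i\in I,\ j\in J\rg ,
\]
so that $\dim\h=n(n-k)+q(k-q)$. To see that $\h$ is closed under brackets:
\begin{itemize}
\item For $A\in\h_0$ and $j\le k$, the product $e_{i,j}A$ equals (row $j$ of $A$) placed in row $i$, which is $0$.
\item $Ae_{i,j}$ has its first $k$ rows zero, so it lies in $\h_0$.
\item Since $I\cap J=\varnothing$, all products $e_{i,j}e_{i',j'}$ with $i,i'\in I$, $j,j'\in J$ vanish.
\end{itemize}
Hence $\h$ is a subalgebra. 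This should be the algebra of Fig.~\ref{fig:h-e}. The main point to check there is which rectangle $I\times J$ is used; my computation says short-block rows against long-block columns.

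\emph{Step 3 (complementarity).} Let $A\in\h\cap(\gln)^x$. Its first $k$ rows are supported on $I\times J$, and these entries vanish by Step~1. So the first $k$ rows of $A$ are zero, and injectivity gives $A=0$. The dimension count is
\[
\dim(\gln)^x=lk^2+q^2,\qquad \dim\h=n(n-k)+q(k-q)=n^2-lk^2-q^2 ,
\]
so $\h\oplus(\gln)^x=\gln$.

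The only delicate step is Step~1, namely the correct identification of which entries of the top $k\times k$ corner are forced to vanish. The index bookkeeping of chain lengths ($l$ versus $l+1$) must be done carefully. For $q=0$ the set $I\times J$ is empty, and $\h=\h_0$ recovers Example~\ref{ex:sheet-sl} when $k=1$.
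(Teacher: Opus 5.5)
Your proof is correct, and your $\h$ (all of the last $n-k$ rows, plus the block of rows $q{+}1,\dots,k$ against columns $1,\dots,q$) is exactly the subalgebra of Fig.~\ref{fig:h-e}. Your route differs from the paper's.

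The paper argues by induction on $l=[n/k]$. For $l\ge 2$ it uses the following steps:
\begin{enumerate}
\item It restricts centralising matrices to $\Ima(e)$ and notes that $e'=e\vert_{\Ima(e)}$ is again the $k$-th power of a regular nilpotent element of $\gl_{n-k}$.
\item It uses $\dim\g^e-\dim(\g')^{e'}=k^2$ to show that $\Phi:\g^e\to(\g')^{e'}$ is onto with $\Ker(\Phi)\simeq\Hom(\WW,\Ker(e))$.
\item It writes $\h=\h'\oplus\Hom(\Ima(e),\WW)\oplus\Hom(\WW,\UU)$, so $\h\cap\g^e=\{0\}$ follows from $\h'\cap(\g')^{e'}=\{0\}$ and $\Ker(e)\cap\UU=\{0\}$.
\end{enumerate}
The base case $l=1$, where $e^2=0$, is handled by an explicit block form.

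You instead work directly in coordinates. The commutation relations embed $\g^x$ into the space of its top $k$ rows. The shift invariance $A_{i+k,j+k}=A_{i,j}$, together with the vanishing of the lower-left $k\times(n-k)$ corner, then kills the $I\times J$ entries. This comes from comparing the chain lengths $l$ and $l+1$ of the residue classes of $i$ and $j$ modulo $k$.

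What your approach buys:
\begin{enumerate}
\item It needs no separate base case and no refined decomposition relying on $\Ker(e)\subset\Ima(e)$.
\item It checks that $\h$ is a subalgebra (indeed an associative one), which the paper only asserts.
\item It shows transparently why exactly the extra $q(k-q)$ entries can be placed in the first $k$ rows.
\end{enumerate}
What the paper's induction buys: it is coordinate-free, and it explains the recursive shape of $\h$ as $\h'$ plus two $\Hom$-blocks. This matches the ``thick centraliser'' picture in the example that follows. Both proofs rely on the standard formula $\dim(\gln)^x=\sum_j\mu_j^2$ for the final dimension count.
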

\begin{proof}
{\sf (1)}  Set $e=\tilde e^k$. Clearly, $\h$ is a subalgebra and $\dim\h=n(n-k)+q(k-q)=\dim GL_n{\cdot}e$, as required. Assume 
that the matrix $\tilde e$ has the usual Jordan form, with $n-1$ entries equal to $1$
(cf. Example~\ref{ex:sheet-sl}).
Let $I_m$ denote the identity 
matrix of order $m$. Then $e$ is the $n\times n$ matrix, where the upper-right (=\,north-east) block of size 
$n-k$ is $I_{n-k}$ and all other entries are zero.
Our goal is to prove that $\h\cap\g^{e}=0$. We argue by induction on $l=[n/k]$. 
\begin{center}
\begin{figure}[ht]           
\begin{tikzpicture}[scale= .42]
\draw (0,0)  rectangle (10,10);
\path[draw,line width=1pt]  (0,7.8) -- (2.2,7.8) -- (2.2,6) -- (10,6) ; 

\path[fill=gray!30]  (0,0) -- (10,0) -- (10,6) -- (0,6)--cycle ;
\path[fill=gray!30]  (0,6) -- (2.2,6) -- (2.2,7.8) -- (0,7.8)--cycle ;
\draw[dashed,redi]  (10,0) -- (0,10) ;
\draw[dashed,darkblue]  (0,6) -- (2.2,6) ;
\draw[dashed,darkblue]  (4,0) -- (4,6) ;
\draw[dashed,darkblue]  (2.2,0) -- (2.2,6) ;

\draw (1,-0.5)  node {\footnotesize  {\color{darkblue}$q$}} ;
\draw (3.1,-0.5)  node {\footnotesize {\color{darkblue}$k{-}q$}} ;
\draw (7,-0.5)  node {\footnotesize {\color{darkblue}$n{-}k$}} ;
\draw (-1.2,5)  node {$\h=$} ;
\draw (-0.9,6.8)  node {\footnotesize {\color{darkblue}$k{-}q$}} ;
\draw (-0.5,8.7)  node {\footnotesize {\color{darkblue}$q$}} ;
\draw (11,3.1)  node {\footnotesize {\color{darkblue}$n{-}k$}} ;
\end{tikzpicture}
\begin{tikzpicture}[scale= .42]
\draw (0,0)  rectangle (10,10);

\draw[dashed,redi]  (10,0) -- (0,10) ;
\draw[dashed,darkblue]  (0,4) -- (10,4) ;
\draw[dashed,darkblue]  (4,0) -- (4,10) ;

\draw (7,-0.5)  node {\footnotesize {\color{darkblue}$n{-}k$}} ;
\draw (2,-0.5)  node {\footnotesize {\color{darkblue}$k$}} ;
\draw (-1,5)  node {$e=$} ;
\draw (7,6.5)  node {$I_{n-k}$} ;
\draw (2,6.5)  node {$0$} ;
\draw (7.1,2)  node {$0$} ;
\draw (2,2)  node {$0$} ;
\draw (11,6.4)  node {\footnotesize {\color{darkblue}$n{-}k$}} ;
\draw (10.7,2.3)  node {\footnotesize {\color{darkblue}$k$}} ;
\end{tikzpicture}
\caption{A complementary subalgebra $\h$ for $(\gln)^e$ and $e=\tilde e^k$}  \label{fig:h-e}
\end{figure}
\end{center}
\vskip-2ex  
\par\indent
{\sf (2)} \ 
Suppose that $l=1$, i.e., $k>n/2$ and $n=k+q$. Then $\blb(e)=(2^q, 1^{k-q})$ 
and $\rk(e)=q$. In this case $e^2=0$, i.e., the orbit $GL_n{\cdot}e$ is spherical, see~\cite[Ch.\,4.3]{disser}.  Then 
\beq         \label{eq:dopolnit}
\text{
$\g^e=\left\{\begin{pmatrix}   A & \ast & \ast  \\ 0 & C & \ast \\  0 & 0 & A 
\end{pmatrix}\in \mathfrak{gl}_{k+q} \mid A\in \mathfrak{gl}_q, \ C\in \mathfrak{gl}_{k-q}
\right\}$ \ and \     
$\h=\begin{pmatrix}   0 & 0 & 0  \\ \ast & 0 & 0 \\  \ast & \ast & \ast
\end{pmatrix}$,} 
\eeq
cf. Fig.~\ref{fig:h-e}. Clearly, we have here $\g^e\oplus\h=\g$.

{\sf (3) The general case with} $l>1$. Our induction step provides the passage from $n=kl+q$ to
$n-k=k(l-1)+q$.  Recall that $\blb(e)=((l+1)^q, l^{k-q})$ and therefore $\dim\Ker(e)=k$.

We have $\gln=\glv$, where $\VV\simeq\bbk^n$. Consider the decompositions
\beq    \label{eq:decomp}
     \VV=\Ker(e)\oplus \UU=\Ima(e)\oplus \WW ,
\eeq
where $\dim\Ker(e)=\dim\WW=k$ and $\dim\Ima(e)=\dim\UU=n-k$. Since 
\[ 
\glv=\mathfrak{gl}\bigl(\Ima(e)\oplus \WW\bigr)= 
\mathfrak{gl}(\Ima(e))\oplus \Hom\bigl(\Ima(e),\WW\bigr)\oplus \Hom\bigl(\WW, \Ima(e)\bigr)\oplus \mathfrak{gl}(\WW),
\]  
we obtain the corresponding matrix decomposition of $\glv$:

\begin{center}
\begin{tikzpicture}[scale=.42]
\draw (0,0)  rectangle (10,10);
\draw[dashed,redi]  (10,0) -- (0,10) ;
\draw[dashed,darkblue]  (6,0) -- (6,10) ;
\draw[dashed,darkblue]  (0,4) -- (10,4) ;

\draw (-1,6.5)  node {\footnotesize {\color{darkblue}$n{-}k$}} ;
\draw (-0.8,2)  node {\footnotesize {\color{darkblue}$k$}} ;
\draw (3.5,-0.5)  node {\footnotesize {\color{darkblue}$n{-}k$}} ;
\draw (8,-0.5)  node {\footnotesize {\color{darkblue}$k$}} ;
\draw (3.2,6.5)  node {\small $\mathfrak{gl}(\Ima(e))$} ;
\draw (3.1,2.5)  node {\small $\Hom(\Ima(e),\WW)$} ;
\draw (8.1,2.5)  node {\small $\mathfrak{gl}(\WW)$} ;
\draw (13.5,7)  node {\small $\Hom(\WW,\Ima(e))$} ;
\draw[->]   (12.2, 6.3) .. controls (10,4.5) .. (8,6);

\draw (-3.3,6.8)  node {\phantom{$\Hom(\WW,\Ima(e))$}} ;
\end{tikzpicture}
\end{center}
\noindent
If $l\ge 2$, then $\Ker(e)\subset\Ima(e)$. Therefore, one can assume that $\WW\subset \UU$ and 
arrange the refined decomposition
\beq \label{eq:refined}
     \VV=\Ker(e)\oplus \VV'\oplus \WW ,
\eeq
where $\dim\VV'=n-2k$, $\Ker(e)\oplus \VV'=\Ima(e)$, and $\VV'\oplus \WW=\UU$. The subspace 
$\Ima(e)$ is $e$-stable and $e$ induces a nilpotent transformation of $\Ima(e)$, which is denoted by $e'$. 
Set $\g'=\mathfrak{gl}(\Ima(e))\simeq \mathfrak{gl}_{n-k}$. It is easily seen that $\blb(e')=(l^q, (l-1)^{k-q})$, 
which means that $e'$ is the $k$-th associative power of a regular nilpotent element of $\g'$.
Since $\widehat\blb(e')=(k^{l-1},q)$, we have 
 \beq    \label{eq:raznost}
       \dim\g^e-\dim (\g')^{e'}=k^2 .
\eeq
By the induction assumption for $(\g')^{e'}$, there is the complementary subalgebra 
$\h'\subset\g'$ of prescribed shape. 
                     
By \eqref{eq:refined}, we have 
$\Hom(\WW,\VV') \oplus \gl(\WW)=\Hom(\WW,\UU)$.
Combining this with the presentation of $\h$ in Fig.~\ref{fig:h-e}, we see that
$\h=\h'\oplus\Hom\bigl(\Ima(e),\WW\bigr) \oplus \Hom(\WW, \UU)$, which can graphically be depicted 
as the coloured area below: 

\begin{center}
\begin{tikzpicture}[scale=.42]
\draw (0,0)  rectangle (10,10);
\path[draw]  (2.2,6) -- (10,6); 
\path[draw]  (0,7.8) -- (2.2,7.8); 
\path[draw]  (2.2,6) -- (2.2,7.8);

\path[fill=brown!30,draw,line width=1pt] (0,4) -- (6,4) -- (6,6) -- (2.2,6) -- (2.2,7.8) -- (0,7.8) -- (0,4)--cycle ;
\shade[left color=yellow,right color=gray, draw,line width=1pt] (0,0) -- (6,0) -- (6,4) -- (0,4)--cycle ;
\shade[bottom color=yellow,top color=gray,draw,line width=1pt] (6,0) -- (10,0) -- (10,6) -- (6,6)--cycle ;
\draw[dashed,redi]  (10,0) -- (0,10) ;
\draw[dashed,darkblue]  (6,0) -- (6,10) ;
\draw[dashed,darkblue]  (6,4) -- (10,4) ;

\draw (3.1,2.5)  node {\small $\Hom(\Ima(e),\WW)$} ;
\draw (13.3,2.2)  node {\small $\gl(\WW)$} ;
\draw[->]   (12.4, 1.5) .. controls (10,.5) .. (8.3,2.1);

\draw (13.6,4.6)  node {\small $\Hom(\WW,\VV')$} ;
\draw[->]   (12.4, 5.4) .. controls (10.5,6) .. (8,5);

\draw (13.5,7.8)  node {\small $\Hom(\WW,\Ker(e))$} ;
\draw[->]   (12.4, 8.8) .. controls (11,10.7) .. (8,8);

\draw (3.5,-0.5)  node {\footnotesize {\color{darkblue}$n{-}k$}} ;
\draw (8,-0.5)  node {\footnotesize {\color{darkblue}$k$}} ;
\draw (-1.2,5)  node {$\h=$} ;
\draw (3.2,5)  node {$\h'$} ;
\draw (-0.8,2)  node {\footnotesize {\color{darkblue}$k$}} ;
\end{tikzpicture}
\end{center}
Consider the natural map $\Phi: \g^e\to (\g')^{e'}$ such that $x\mapsto x\vert_{\Ima(e)}=\Phi(x)$.
Since $xe=ex$, the subspace $\Ima(e)$ is $x$-stable and $\Phi$ is well-defined. Then 
\beq   \label{eq:Ker-Phi}
\Ker(\Phi)=\{x\in\g^e\mid x\vert_{\Ima(e)}=0\}=\{x\in\g^e\mid xe=0\}=\{x\in\g^e\mid ex=0\} .
\eeq
By~\eqref{eq:decomp}, the restriction $x\mapsto x\vert_{\WW}$ yields an injective linear map
$\Ker(\Phi)\to \Hom (\WW,\Ker(e))$. Since $\dim\Hom (\WW,\Ker(e))=k^2$, it follows 
from~\eqref{eq:raznost} that $\Ker(\Phi)\simeq \Hom (\WW,\Ker(e))$ and $\Phi$ is onto. 
Now, since 
$\h'\cap(\g')^{e'}=\{0\}$, using this description of $\Ker(\Phi)$ and $\h$, we conclude that
$\h\cap\g^e=\{0\}$, as required.
\end{proof}

\begin{ex}
If $k$ divides $n$, then $q=0$, $\dim(\gln)^e=nk=lk^2$, and $\h$ given in Fig.~\ref{fig:h-e}
is the set of matrices whose first $k$ rows are zero. This can also be explained as follows.
\\ \indent
If $n=kl$ and $e=\tilde e^k$, then $(\gln)^e$ can be obtained as a ``thick centraliser'' of a regular nilpotent element
of $\gl_l$. That is, if $\hat e\in (\gl_l)_{\sf reg}\cap\N$ has the usual Jordan form, then $(\gl_l)^{\hat e}$
is the set of upper-triangular matrices such that  $a_{11}=\dots=a_{ll}$, $a_{12}=\ldots=a_{l-1,l}$, etc.
Then we replace each $a_{ij}$ with $k\times k$-matrix $A_{ij}$ subject to the same relations. This 
provides $(\gln)^e$. For instance, if $n=3k$ (i.e., $l=3$), then 
\[
   e=  \begin{pmatrix}
0&I_k&0 \\ 0&0&I_k \\ 0&0&0
\end{pmatrix}\!, 
(\gln)^e=
\{\begin{pmatrix}
A_{11} & A_{12} & A_{13} \\ 0 & A_{11} & A_{12} \\ 0&0& A_{11}
\end{pmatrix}\mid A_{ij}\in \gl_k\}.
\ \text{ Hence } \ 
\h=\{ \begin{pmatrix}
 0 & 0 & 0 \\ \ast & \ast & \ast \\ \ast & \ast & \ast 
\end{pmatrix} \}
\]
 is a complementary subalgebra.
\end{ex}

\begin{rmk}
If $G{\cdot}e$ is strange, various complementary subalgebras for $\g^e$ may have essentially different structure. 
For instance, in Part (2) of the proof of Theorem~\ref{thm:strange-gln}, the Levi subalgebra of $\h$ 
is isomorphic to $\mathfrak{gl}_q$, see Eq.~\eqref{eq:dopolnit}. On the other hand, two diagonal 
blocks of size $q=n-k$ in $\h$ (the first of them is zero and the other is just $\mathfrak{gl}_q$) can be 
replaced with the first block
$D_1={\small\begin{pmatrix} t_1 & 0 & 0 \\  \ast & \ddots & 0\\ \ast & \ast & t_q  \end{pmatrix}}\in \mathfrak{gl}_q$  
and the second block
$D_2={\small\begin{pmatrix} -t_1 & \ast & \ast \\  0 & \ddots & \ast \\ 0 & 0 & -t_q \end{pmatrix}}
          \in \mathfrak{gl}_q$.  
This yields a solvable complementary subalgebra 
$\tilde\h={\small\begin{pmatrix}   D_1 & 0 & 0  \\ \ast & 0 & 0 \\  \ast & \ast & D_2
\end{pmatrix}}$.  
This reflects the general fact that, for a spherical orbit 
$G{\cdot}e$, there is always a solvable complementary subalgebra for $\g^e$.
\end{rmk}

\begin{rmk}       \label{rem:power-sph}
By~\cite[Ch.\,4]{disser}, a non-trivial orbit $\co(\blb)\subset\sln$ is spherical if and only if $\lb_1=2$. 
Therefore, all spherical nilpotent orbits are of the form $SL_n{\cdot}\tilde e^k$ for $k\ge \lfloor n/2\rfloor$.
But the construction of Theorem~\ref{thm:strange-gln} provide a non-solvable complementary subalgebra for them.
\end{rmk}

\section{Partitions with at most three parts and strange orbits}
\label{sect:two-col}
\noindent
We continue to work with nilpotent $SL_n$-orbits.
Let $\mathcal P(n)$ be the set of partitions of $n$. The standard partial order on $\mathcal P(n)$ is 
defined by condition that $\blb\succ\boldsymbol{\nu}$ if and only if $\sum_{i\le j}\lb_i\ge \sum_{i\le j}\nu_i$ 
for all $j$. Then $\ov{\co(\blb)}\supset \co(\boldsymbol{\nu})$ if and only if 
$\blb\succ\boldsymbol{\nu}$~\cite[Chap.\,6.2]{CM}.

\subsection{Partitions with two parts} A partition $\blb$ is said to be {\it two-column\/} if the number of its nonzero parts  is at most two. 
The largest two-column partition $(n)$ provides the principal nilpotent orbit 
$\co_{\sf pr}=SL_n{\cdot}\tilde e$ and the smallest two-column partition $(\lceil n/2\rceil, \lfloor n/2\rfloor)$ 
corresponds to $\co_{\sf pr}^{\lg 2\rg}:=SL_n{\cdot}\tilde e^2$. Hence
\[
   \co_{\sf pr}^{\lg 2\rg}=\begin{cases} \co(m,m), &  \text{if }\ n=2m, \\ 
   \co(m{+}1,m), & \text{if }\ n=2m{+}1.\end{cases}
\]
By Theorem~\ref{thm:strange-gln}, these orbits are strange, and we obtain below the complete description of strange 
$SL_n$-orbits between them, i.e., for all two-column partitions of $n$. It is assumed below that $n\ge 4$.

Let us begin with some preparations. Consider the usual numbering of the simple roots of $\sln$. If 
$A\subset\{1,2,\dots,n{-}1\}=:[n{-}1]$, then $\bar A=[n-1]\setminus A$ and
$\p_A$ is the standard parabolic subalgebra such that
$\{\ap_i\mid i\in A\}$ is the set of simple roots of the Levi subalgebra $\el_A\subset\p_A$. For instance, 
$\p_\varnothing=\be$ and, for any $ i\in[n{-}1]$, $\p_{\ov{i}}$ is a maximal 
parabolic subalgebra. (We prefer to write $\ov{i}$ in place of $\ov{\{i \}}$.) Then $\dim\p_{\ov{ i}}=
n^2{-}1-i(n{-}i)$ and $\p_{\ov{1}}$ is a subalgebra of maximal dimension.  By Example~\ref{ex:sheet-sl}(2),
$(\co_{\sf pr}, \p_{\ov{1}})$ is a strange pair, hence $\ind\p_{\ov{1}}=0$. Here
$\p_{\ov{1}}=\el_{\ov{1}}\ltimes \p_{\ov{1}}^{\sf nil}=\gl_{n-1}\ltimes \bbk^{n-1}$. The next possible 
dimension of parabolic subalgebras is $\dim\p_{\ov{2}}=\dim\p_{\ov{1}}-n+3=(n{-}1)^2{+}2$.
We also need $\p_{\ov{1,2}}=\p_{\ov{1}}\cap\p_{\ov{2}}$, where  $\dim\p_{\ov{1,2}}=\dim\p_{\ov{2}}-1$.

\begin{lm}       \label{lm:proper-p1}
Let\/ $\h$ be a proper subalgebra of $\p_{\ov{1}}$ such that 
$\dim\h\ge \dim\p_{\ov{1}}-(n-3)=\dim\p_{\ov{2}}$. Then $\h=\sel_{n-1}\ltimes\p_{\ov{1}}^{\sf nil}=[\p_{\ov{1}},\p_{\ov{1}}]$. (In particular, $\h$ is not Frobenius.)
\end{lm}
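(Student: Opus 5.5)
We work with the Levi decomposition $\p_{\ov{1}}=\el\ltimes V$, where $\el=\el_{\ov{1}}\simeq\gl_{n-1}$ and $V=\p_{\ov{1}}^{\sf nil}\simeq\bbk^{n-1}$ is an abelian ideal. As an $\el$-module, $V$ is the standard module (or its dual) of $\gl_{n-1}$; in particular it is a simple $\sel_{n-1}$-module, and the centre of $\el$ acts on $V$ by a nonzero scalar. Let $\pi:\p_{\ov{1}}\to\el$ be the projection with kernel $V$ and set $\bar\h=\pi(\h)$. The plan is to determine first $\bar\h$ and then $\h\cap V$.

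\emph{Step 1: $\bar\h\supset\sel_{n-1}$.} We have $\codim_{\el}\bar\h\le\codim_{\p_{\ov{1}}}\h\le n-3$, hence $\codim_{\sel_{n-1}}(\bar\h\cap\sel_{n-1})\le n-3$. We then invoke the standard fact that a proper subalgebra of $\sel_m$ has codimension at least $m-1$ (here $m=n-1\ge 3$). Geometrically, a nontrivial homogeneous space of $SL_m$ has dimension $\ge m-1$, the minimum being attained by $\BP^{m-1}$. Algebraically, one can use \cite[Theorem\,4]{K51}: maximal parabolic subalgebras of $\sel_m$ have codimension $i(m-i)\ge m-1$, and the maximal semisimple subalgebras have much larger codimension. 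This forces $\bar\h\cap\sel_{n-1}=\sel_{n-1}$, so $\bar\h$ is either $\sel_{n-1}$ or $\gl_{n-1}$. This bound is the only genuinely external input, and I expect its justification to be the main point needing care; the remaining steps are formal.

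\emph{Step 2: $V\subset\h$.} The subspace $\h\cap V$ is $(\ad\h)$-stable. Since $V$ is abelian, the action of $\h$ on $V$ factors through $\bar\h\supset\sel_{n-1}$. Because $V$ is a simple $\sel_{n-1}$-module, it follows that $\h\cap V$ is either $\{0\}$ or $V$. If $\h\cap V=\{0\}$, then $\pi\vert_{\h}$ is injective and $\dim\h\le\dim\gl_{n-1}=(n-1)^2<(n-1)^2+2=\dim\p_{\ov{2}}$. This contradicts the hypothesis, so $V\subset\h$.

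\emph{Step 3: conclusion.} Since $V\subset\h$, we get $\h=\pi^{-1}(\bar\h)=\bar\h\ltimes V$. Properness of $\h$ excludes $\bar\h=\gl_{n-1}$, so $\h=\sel_{n-1}\ltimes V$. This algebra coincides with $[\p_{\ov{1}},\p_{\ov{1}}]$: indeed $[\el,\el]=\sel_{n-1}$, and $[\el,V]=V$ because the central element of $\el$ acts on $V$ by a nonzero scalar. Finally, $\dim\h=\dim\p_{\ov{1}}-1=n^2-n-1$ is odd. Since $\dim\h\equiv\ind\h\pmod 2$, we get $\ind\h\ge 1$, so $\h$ is not Frobenius.
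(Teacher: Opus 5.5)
Your proof is correct and follows essentially the paper's route: you force the Levi part using the bound that proper subalgebras of $\sel_{n-1}$ have codimension $\ge n-2$, then obtain $\p_{\ov{1}}^{\sf nil}\subset\h$ from its simplicity as an $\sel_{n-1}$-module, and conclude by properness. The differences are cosmetic. The paper intersects $\h$ with $\sel_{n-1}$ instead of projecting, so it gets $\h\supset\sel_{n-1}$ directly and then $\dim(\h\cap\p_{\ov{1}}^{\sf nil})\ge 2$, with no need to use that $V$ is abelian. It also takes the codimension bound from Lemma~\ref{lm:ss}, i.e.\ $\codim\ge\rk\sel_{n-1}=n-2$, rather than from $\BP^{n-2}$ or Karpelevich.
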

\begin{proof}
By the assumption, $\dim(\h\cap\sel_{n-1})\ge \dim\sel_{n-1}-(n-3)$. Since 
$\rk( \sel_{n-1})=n-2$, any proper subalgebra of $\sel_{n-1}$ is codimension $\ge n{-}2$, see~Lemma~\ref{lm:ss}. Therefore, $\h\supset \sel_{n-1}$.
\\  \indent 
Next, $\dim\h\cap\p_{\ov{1}}^{\sf nil}\ge (n{-}1)-(n{-}3)$ and $\p_{\ov{1}}^{\sf nil}$ is a simple 
$\sel_{n-1}$-module. Hence $\h\supset\p_{\ov{1}}^{\sf nil}$.
\end{proof}

Note that  $\p_{\ov{2}}$ is  not a subalgebra of $\p_{\ov{1}}$.
Let $\tilde\co\subset\N$ be the minimal orbit containing $\co_{\sf pr}^{\lg 2\rg}$ in its closure, i.e.,
$\tilde\co=\co(m{+}1,m{-}1)$ if $n=2m$; and $\tilde\co=\co(m{+}2,m{-}1)$ if $n=2m+1$.
\begin{thm}       \label{thm:no-strange}
If $\co\ne\co_{\sf pr}$ and $\dim\co>\dim\tilde\co$, then $\co$ is not strange.
\end{thm}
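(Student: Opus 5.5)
The plan is a dimension count. If $\co$ is strange with complementary subalgebra $\h$, then $\dim\h=\dim\co$ and $\h$ is Frobenius by Corollary~\ref{cor:3-cases}{\sf (iii)}. So it suffices to show that every proper subalgebra $\h\subset\sln$ with $\dim\h>\dim\tilde\co$ is either conjugate (up to the transpose-inverse automorphism) to $\p_{\ov{1}}$ itself, or is not Frobenius.

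First I compute $\dim\tilde\co$ from $\dim SL_n{\cdot}e=n^2-\sum\mu_j^2$.
\begin{itemize}
\item For $n=2m$, $\tilde\co=\co(m{+}1,m{-}1)$ has dual partition $(2^{m-1},1,1)$. Hence $\dim\tilde\co=n^2-(2n-2)=(n-1)^2+1$.
\item For $n=2m{+}1$, $\tilde\co=\co(m{+}2,m{-}1)$ has dual partition $(2^{m-1},1^3)$. Hence $\dim\tilde\co=(n-1)^2+2=\dim\p_{\ov{2}}$.
\end{itemize}
In both cases, orbits have even dimension and $\dim\p_{\ov{2}}=(n-1)^2+2$. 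Therefore $\dim\co>\dim\tilde\co$ forces $\dim\h=\dim\co\ge\dim\p_{\ov{2}}$, and in the odd case even $\dim\h>\dim\p_{\ov{2}}$. Also $\h\ne\sln$, since $\co\ne\{0\}$.

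Next, $\h$ lies in a maximal subalgebra $\mathfrak m$ of $\sln$, which is either semisimple or parabolic~\cite[Theorem\,4]{K51}.
\begin{itemize}
\item \emph{Semisimple case.} The maximal semisimple subalgebras of $\sln$ (irreducible ones such as $\mathfrak{so}_n$, $\mathfrak{sp}_n$, $\sel_a\otimes\sel_b$, images of other simple algebras, and reducible ones of the form $\sel_a\oplus\sel_b$, which are not maximal anyway) all have dimension well below $(n-1)^2+2$ for $n\ge4$. The largest are $\mathfrak{sp}_n$, of dimension $n(n+1)/2$, and for $n=4$ this gives $10<11$. I will check this bound by a short estimate, using that an irreducible subalgebra has dimension at most $\dim\mathfrak{sp}_n$ or $\dim\mathfrak{so}_n$.
\item \emph{Parabolic case.} $\mathfrak m=\p_{\ov{i}}$ up to conjugacy, with $\dim\p_{\ov{i}}=n^2-1-i(n-i)$. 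Only $i=1$ and $i=n-1$ reach $\dim\p_{\ov{2}}$. Since $\p_{\ov{n-1}}$ is the image of $\p_{\ov{1}}$ under $x\mapsto -x^t$, which preserves nilpotent orbits, I may assume $\h\subset\p_{\ov{1}}$.
\end{itemize}

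There are two possibilities for $\h\subset\p_{\ov{1}}$.
\begin{itemize}
\item If $\h=\p_{\ov{1}}$, then $\dim\co=n^2-n$, so $\co$ is a regular orbit. For nilpotent $\co$ this means $\co=\co_{\sf pr}$, which is excluded by hypothesis.
\item If $\h\subsetneq\p_{\ov{1}}$, then $\dim\h\ge\dim\p_{\ov{2}}$, and Lemma~\ref{lm:proper-p1} gives $\h=[\p_{\ov{1}},\p_{\ov{1}}]$. This subalgebra is not Frobenius (indeed it has odd dimension $n^2-n-1$), which is a contradiction.
\end{itemize}
Hence no such $\h$ exists, and $\co$ is not strange.

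The main obstacle is the semisimple case: making the dimension bound for maximal semisimple subalgebras rigorous and uniform in $n\ge4$, in particular handling small $n$ and irreducible images of exceptional or simple algebras. I would first bound any irreducible semisimple proper subalgebra by $\max(\dim\mathfrak{so}_n,\dim\mathfrak{sp}_n)$ via Dynkin's classification. A cruder alternative is to note that a Frobenius $\h$ inside a semisimple $\mathfrak m$ must be a proper subalgebra of $\mathfrak m$, which only lowers its dimension further.
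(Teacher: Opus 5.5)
Your proposal is correct and follows essentially the paper's own argument: a complementary $\h$ would be a Frobenius subalgebra with $\dim\h=\dim\co$, the only maximal subalgebras large enough to contain it are $\p_{\ov1}$ and its image under $x\mapsto -x^t$, the case $\h=\p_{\ov1}$ forces $\co=\co_{\sf pr}$, and Lemma~\ref{lm:proper-p1} rules out proper subalgebras of $\p_{\ov1}$ of that size. One small slip needs fixing: for even $n$ you only record $\dim\h\ge\dim\p_{\ov2}$, and then ``only $i=1,n-1$ reach $\dim\p_{\ov2}$'' overlooks $\h=\mathfrak m=\p_{\ov2}$ (or $\p_{\ov{n-2}}$); your own parity remark excludes this, because $\dim\p_{\ov2}=(n-1)^2+2$ is odd for even $n$, so in fact $\dim\h\ge(n-1)^2+3>\dim\p_{\ov2}$ (this is exactly the paper's remark that $\p_{\ov2}$ is not Frobenius for even $n$).
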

\begin{proof}
First, consider the case in which $n=2m+1$. Then $\dim\tilde\co=\dim\p_{\ov{2}}$. 
Assume that $\dim\tilde\co<\dim\co<\dim\co_{\sf pr}$ and $(\co,\q)$ is a strange pair. 
Then $\dim\p_{\ov{2}}<\dim\q<\dim\p_{\ov{1}}$ and $\ind\q=0$. Then $\q$ cannot be a parabolic 
subalgebra (for the dimension reason) and it is not semisimple. Hence $\q$ is not a maximal subalgebra.
Let $\widehat\q$ be a maximal subalgebra of $\sln$ containing $\q$. Then $\widehat\q$ is not semisimple, 
since there are no semisimple subalgebras of dimension bigger than $(n-1)^2$. Hence the only possibility is $\widehat\q=\p_{\ov{1}}$. Then Lemma~\ref{lm:proper-p1} leads to a contradiction.

For $n=2m$, the argument is similar. The difference is that now  $\dim\tilde\co=\dim\p_{\ov{1,2}}$, and the 
subalgebra $\p_{\ov{1,2}}$ is used in place of $\p_{\ov{2}}$. We also need the fact that $\p_{\ov{2}}$
is not Frobenius here (because $\dim\p_{\ov{2}}$ is odd for even $n$).
\end{proof}
\begin{rmk}
This theorem applies not only to the orbits corresponding to two-column partitions. It can happen that 
$\dim\co>\dim\tilde\co$, but $\ov{\co}\not\supset \tilde\co$, i.e., $\blb(\co)$ has more than two parts. The 
smallest example occurs for $n=10$, where $\tilde\co=\co(6,4)$ and $\dim\co(8,1,1)>\dim\co(6,4)$. Therefore, $\co(8,1,1)$ is not 
strange.
\end{rmk}

It follows from Theorem~\ref{thm:no-strange} that, for the two-column partitions, the only unclear case  
concerns the orbit $\tilde\co$. Our next goal is to handle this.

\begin{thm}    \label{thm:2-col-strange}
For any $n\ge 4$, the orbit $\tilde\co\subset\sln=\sel(\gV)$ is strange and there is a complementary 
parabolic subalgebra. More precisely, 
\\ \indent
{\sf (i)} \ If\/ $n=2m$ and $\tilde\co=\co(m{+}1,m{-}1)$, then a complementary subalgebra is\/ 
$\p_{\ov{1,2}}$.
\\ \indent
{\sf (ii)} \ If\/ $n=2m{+}1$ and $\tilde\co=\co(m{+}2,m{-}1)$, then a complementary subalgebra is\/ 
$\p_{\ov{2}}$;
\end{thm}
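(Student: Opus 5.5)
Since complementarity only requires $\g^x\cap\q=\{0\}$ for a single point $x\in\tilde\co$ together with $\dim\q=\dim\tilde\co$, the plan is to check the dimensions and then exhibit an explicit representative $x$ with trivial intersection. By Lemma~\ref{lm:sl-vs-gl} we may work in $\gln=\gl(\VV)$, replacing $\p_{\ov{1,2}}$ (resp.\ $\p_{\ov{2}}$) by the corresponding standard parabolic of $\gln$, and show that it meets $(\gln)^x$ trivially. For the dimension count, $\widehat\blb(m{+}1,m{-}1)=(2^{m-1},1^2)$, so $\dim(\gln)^x=4(m-1)+2=2n-2$. On the other side, $\p_{\ov{1,2}}\subset\gln$ has Levi $\gl_1\oplus\gl_1\oplus\gl_{n-2}$, hence codimension $(n-1)+(n-2)=2n-3$ in $\gln$; in $\sln$ the corresponding codimensions are $2n-3$ both ways, as required. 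For $n=2m{+}1$, $\widehat\blb(m{+}2,m{-}1)=(2^{m-1},1^3)$ gives $\dim(\gln)^x=4(m-1)+3=2n-3$, while $\p_{\ov 2}$ has codimension $2(n-2)=2n-4$ in $\gln$, i.e.\ $2n-3$ after passing to $\sln$ via Lemma~\ref{lm:sl-vs-gl}. The statement is therefore equivalent to $\p\cap\g^x=\{0\}$ in $\sln$, or, in $\gln$, to $\p\cap(\gln)^x=\bbk I$.

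Next we would make this condition concrete. Let $\VV_1=\lg v_1\rg$ and $\VV_2=\lg v_1,v_2\rg$. The parabolic $\p_{\ov{1,2}}$ is the stabiliser of the flag $\VV_1\subset\VV_2$, and $\p_{\ov 2}$ is the stabiliser of $\VV_2$. Rather than fixing $x$, we fix the flag and choose $x\in\tilde\co$ generically. An element $y\in(\gln)^x$ is determined by polynomials in $x$ on the two Jordan strings: writing the strings as $u,xu,\dots$ (length $\lb_1$) and $w,xw,\dots$ (length $\lb_2$), $y$ is determined by $yu$ and $yw$, which may be any vectors annihilated by $x^{\lb_1}$ and $x^{\lb_2}$ respectively. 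The conditions $y\VV_1\subset\VV_1$ and $y\VV_2\subset\VV_2$ then become linear equations on these data, and the task is to choose $v_1,v_2$ in general position relative to the Jordan basis so that the only solutions are scalars. A natural first attempt is to mimic Example~\ref{ex:sheet-sl}: take $v_1$ to be a cyclic-type vector, say $v_1=x^{a}u+w$ or $v_1=u+w$, and $v_2$ a suitable combination such as $x^{b}u+x^{c}w$, and then show that $yv_1\in\VV_1$ and $yv_2\in\VV_2$ force $y$ to be scalar.

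The main obstacle is choosing $v_1,v_2$ and carrying out this linear algebra uniformly in $m$. We would reduce it to a statement about $\bbk[t]$-modules: $\VV\simeq\bbk[t]/(t^{\lb_1})\oplus\bbk[t]/(t^{\lb_2})$, and $\operatorname{End}_{\bbk[t]}(\VV)$ is described by a $2\times2$ matrix of polynomials, with the off-diagonal entries subject to divisibility by $t^{\lb_1-\lb_2}$ in one slot. Requiring a line and a plane to be stable then becomes a small system of polynomial congruences. For $n=2m$ we have $\lb_1-\lb_2=2$, and for $n=2m+1$ we have $\lb_1-\lb_2=3$; the degrees of freedom match exactly (the $2n-2$ or $2n-3$ parameters against the conditions), so it suffices to show that the determinant of the resulting square linear system is nonzero for one explicit choice. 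If a direct choice proves awkward, the fallback is an upper-semicontinuity argument: over an open set of flags, $\dim(\p\cap(\gln)^x)$ is minimal, so it is enough to verify $\bbk I$ at one convenient point, possibly by induction on $m$ as in the proof of Theorem~\ref{thm:strange-gln}, where adding one box to each column passes from $n$ to $n+2$.
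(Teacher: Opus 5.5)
There is a genuine gap. Your reduction matches the paper's: after the dimension check, everything comes down to exhibiting one flag of the required type whose stabiliser in $(\gln)^x$ is $\bbk I$. That exhibition is the whole content of the theorem, and your proposal stops exactly there. No flag is fixed and no computation is done. Saying that unknowns and equations are equal in number only restates $\dim\p=\dim\tilde\co$. The semicontinuity ``fallback'' still needs one verified point, and the induction on $m$ is never set up.

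Moreover, some of your candidates provably fail. Write $yu=f(x)u+g(x)w$ and $yw=x^{d}h(x)u+k(x)w$ for $y\in(\gln)^x$, where $d=\lb_1-\lb_2$. In case (i), $d=2$. Take $v_1=x^au+w$ with $a\ge1$. Then $(\gln)^x v_1\subset x\bbk[x]u\oplus\bbk[x]w$, a hyperplane containing $v_1$. So the stabiliser of $\lg v_1\rg$ has dimension at least $(2n-2)-(n-2)=n$, and the stabiliser of any flag $\lg v_1\rg\subset\VV_2$ has dimension at least $2$. In fact the line must avoid this hyperplane, i.e.\ lie in the open $(GL_n)^x$-orbit of $\BP(\VV)$. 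Your choice $u+w$ does avoid it, being $(GL_n)^x$-conjugate to $u$.

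The missing step is short once the right flag is chosen. The paper takes $\VV_1=\lg u\rg$ and $\VV_2=\lg u,xu+w\rg$, where $u,w$ are the cyclic generators of the long and short Jordan strings; in case (ii) only the plane is used.

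In case (i), $yu\in\bbk u$ forces $g=0$ and $f=f_0\in\bbk$. Then $y(xu+w)=(f_0x+x^2h)u+kw$ must equal $\alpha u+\beta(xu+w)$. Multiplication by $x^2$ is injective from $\bbk[x]/(x^{m-1})$ to $\bbk[x]/(x^{m+1})$ and produces no terms of degree $\le1$. Hence $\alpha=0$, $\beta=f_0$, $h=0$ and $k=f_0$, so $y=f_0I$.

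In case (ii), $d=3$. The condition $yu=\alpha u+\beta(xu+w)$ gives $f=\alpha+\beta x$ and $g=\beta$. The $u$-component of $y(xu+w)$ is then $\alpha x+\beta x^2+x^3h$, and it must have degree $\le1$. This forces $\beta=0$ and $h=0$, and then the $w$-component $\beta x+k$ forces $k=\alpha$, so $y=\alpha I$.

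In both cases $y$ is scalar, hence $0$ in $\sln$.

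There is also a small slip in your count for (ii). $\p_{\ov2}$ has codimension $2n-4$ in $\sln$ as well, and this matches $\dim(\sln)^x=2n-4$. The number $2n-3$ is $\dim(\gln)^x$.
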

\begin{proof} Let $\tilde\p$ denote $\p_{\ov{2}}$ (resp. $\p_{\ov{1,2}}$) if $n$ is odd (resp. even). In both 
cases, $\dim\tilde\p=\dim\tilde\co$, hence $\dim\tilde\p$ is even. It then suffices to prove that, for 
$x\in\tilde\co$, the group $(SL_n)^x$ has an open orbit in the flag variety $SL_n/\tilde P$. Since 
$(\dim (SL_n)^x=\dim SL_n/\tilde P$, it is also sufficient to find a flag $\eus F\in SL_n/\tilde P$ whose
stabiliser in $(SL_n)^x$ is finite. Thus, for a given $x\in\tilde\co$, we have to point out a flag $\eus F$ such that $(\sln)^x\cap {\sf stab}(\eus F)=\{0\}$. In both cases, we take $x$ in the Jordan normal form.
Then $(\gln)^x$ has a nice graphical description pointed out in~\cite[\S\,4.2]{ar71}. 
For the reader's convenience, we provide a sample picture for $x$ having two Jordan blocks.
\begin{figure}[htb]  
\begin{center}  
\begin{tikzpicture}[scale= .35] 
\draw (0,0)  rectangle (10,10);
\draw[thick,brown]  (0,4) -- (10,4) ;
\draw[thick,brown]  (6,0) -- (6,10) ;
\path[draw, line width=1pt]  (.2,9.8)--(5.8,4.2);   
\path[draw, line width=1pt]  (1.2,9.8)--(5.8,5.2);
\path[draw, line width=1pt]  (2.2,9.8)--(5.8,6.2);
\path[draw, line width=1pt]  (3.2,9.8)--(5.8,7.2);
\path[draw, line width=1pt]  (4.2,9.8)--(5.8,8.2);
\path[draw, line width=1pt]  (5.2,9.8)--(5.8,9.2);
\path[draw, line width=1pt]  (2.2,3.8)--(5.8,.2); 
\path[draw, line width=1pt]  (3.2,3.8)--(5.8,1.2);
\path[draw, line width=1pt]  (4.2,3.8)--(5.8,2.2);
\path[draw, line width=1pt]  (5.2,3.8)--(5.8,3.2);
\path[draw, line width=1pt]  (6.2,9.8)--(9.8,6.2);  
\path[draw, line width=1pt]  (7.2,9.8)--(9.8,7.2);
\path[draw, line width=1pt]  (8.2,9.8)--(9.8,8.2);
\path[draw, line width=1pt]  (9.2,9.8)--(9.8,9.2);
\path[draw, line width=1pt]  (6.2,3.8)--(9.8,.2);  
\path[draw, line width=1pt]  (7.2,3.8)--(9.8,1.2);
\path[draw, line width=1pt]  (8.2,3.8)--(9.8,2.2);
\path[draw, line width=1pt]  (9.2,3.8)--(9.8,3.2);
\foreach \x in {1,3,5,7,9,11}  \shade[ball color=blue] (\x/2,9.5) circle (1.6mm);  
\foreach \x in {3,5,7,9,11}  \shade[ball color=blue] (\x/2,8.5) circle (1.6mm);
\foreach \x in {5,7,9,11}  \shade[ball color=blue] (\x/2,7.5) circle (1.6mm);
\foreach \x in {7,9,11}  \shade[ball color=blue] (\x/2,6.5) circle (1.6mm);
\foreach \x in {9,11}  \shade[ball color=blue] (\x/2,5.5) circle (1.6mm);
\foreach \x in {11}  \shade[ball color=blue] (\x/2,4.5) circle (1.6mm);
\foreach \x in {5,7,9,11}  \shade[ball color=blue] (\x/2,3.5) circle (1.6mm);    
\foreach \x in {7,9,11}  \shade[ball color=blue] (\x/2,2.5) circle (1.6mm);
\foreach \x in {9,11}  \shade[ball color=blue] (\x/2,1.5) circle (1.6mm);
\foreach \x in {11}  \shade[ball color=blue] (\x/2,.5) circle (1.6mm);
\foreach \x in {5,7,9,11}  \shade[ball color=blue] (4+\x/2,9.5) circle (1.6mm);    
\foreach \x in {7,9,11}  \shade[ball color=blue] (4+\x/2,8.5) circle (1.6mm);
\foreach \x in {9,11}  \shade[ball color=blue] (4+\x/2,7.5) circle (1.6mm);
\foreach \x in {11}  \shade[ball color=blue] (4+\x/2,6.5) circle (1.6mm);
\foreach \x in {5,7,9,11}  \shade[ball color=blue] (4+\x/2,3.5) circle (1.6mm);    
\foreach \x in {7,9,11}  \shade[ball color=blue] (4+\x/2,2.5) circle (1.6mm);
\foreach \x in {9,11}  \shade[ball color=blue] (4+\x/2,1.5) circle (1.6mm);
\foreach \x in {11}  \shade[ball color=blue] (4+\x/2,.5) circle (1.6mm);
\end{tikzpicture}
\end{center}
\caption{Centralisers of Jordan matrices in $\gln$}  \label{fig:centraliser}  \label{fig:arnold}
\end{figure}

Here the matrix entries along the oblique segments are equal, and all other entries are zero. Hence
$\dim(\gln)^x$ equals the number of segments. Of course, for $(\sln)^x$, one has to add the condition that
the trace equals zero.

{\sf (i)} \ For $\blb=(m{+}1,m{-}1)$ and $\gV=\bbk^{2m}$, we take a basis $(v_1,\dots,v_{2m})$ 
such that the Jordan blocks of $x$ are $\lg v_1,\dots,v_{m+1}\rg$ and $\lg v_{m+2},\dots,v_{2m}\rg$,
$\Ker(x)=\lg v_1,v_{m+2}\rg$, and $x{\cdot}v_j=v_{j-1}$ inside of each block.
Then $\tilde\p=\p_{\ov{1,2}}$ preserves a flag $\eus F=\{\gV_1\subset \gV_2\}$ and we take 
$\gV_1=\lg v_{m+1}\rg$, $\gV_2=\lg v_{m+1}, v_m+v_{2m}\rg$.
(Here $\gV_j$ is a subspace of $\gV$ of dimension $j$.) 

{\sf (ii)} \ For $\blb=(m{+}2,m{-}1)$ and $\gV=\bbk^{2m+1}$, we take a basis $(v_1,\dots,v_{2m+1})$ 
such that the Jordan blocks in $\gV$ are $\lg v_1,\dots,v_{m+2}\rg$ and $\lg v_{m+3},\dots,v_{2m+1}\rg$,
$\Ker(x)=\lg v_1,v_{m+3}\rg$, and $x{\cdot}v_j=v_{j-1}$ inside of each block. Here $\tilde\p=\p_{\ov{2}}$ 
preserves a flag $\eus F=\{\gV_2\}$ and we take the plane $\gV_2\subset\gV$ with 
basis $(v_{m+2}, v_{m+1}+v_{2m+1})$.

Using the description of the centraliser of Jordan matrices (Fig.~\ref{fig:centraliser}), one readily computes that if $z\in(\sln)^x$ preserves the chosen flag $\eus F$, then $z=0$ (in both cases).
\end{proof}

\begin{cl}    \label{cor:sledstvie}
\begin{itemize}
\item[{\sf (i)}] $\tilde\co$ is the only strange orbit strictly between $\co_{\sf pr}$ and $\co_{\sf pr}^{\lg 2\rg}$;
\item[{\sf (ii)}] $\ind\p_{\ov{1,2}}=0$ \ $\Leftrightarrow \ \dim\p_{\ov{1,2}}$ is even \ $\Leftrightarrow$ \ $n$ is even 
\item[{\sf (iii)}] $\ind\p_{\ov{2}}=0$ \ $\Leftrightarrow \ \dim\p_{\ov{2}}$ is even \ $\Leftrightarrow$ \ $n$ is odd;
\end{itemize}
\end{cl}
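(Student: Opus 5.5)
Part (i) is a bookkeeping consequence of the results just proved. The orbits strictly between $\co_{\sf pr}$ and $\co_{\sf pr}^{\lg 2\rg}$ are the $\co(\blb)$ with $\blb$ a two-part partition different from $(n)$ and from $(\lceil n/2\rceil,\lfloor n/2\rfloor)$. For two-part partitions the dominance order is a chain, and the dimension strictly increases along it. The partition immediately above $(\lceil n/2\rceil,\lfloor n/2\rfloor)$ is $\blb(\tilde\co)$. So every other orbit in this range has $\co\ne\co_{\sf pr}$ and $\dim\co>\dim\tilde\co$, and Theorem~\ref{thm:no-strange} shows it is not strange. On the other hand, $\tilde\co$ itself is strange by Theorem~\ref{thm:2-col-strange}. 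To justify the dimension monotonicity, I would use the formula $\dim\co(\blb)=n^2-\sum\mu_j^2$. For $\blb=(\lb_1,\lb_2)$ the dual partition is $(2^{\lb_2},1^{\lb_1-\lb_2})$, so $\sum\mu_j^2=4\lb_2+(\lb_1-\lb_2)=n+2\lb_2$. This is strictly decreasing as $\lb_2$ decreases, so the orbit dimension strictly increases.

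For (ii) and (iii), I first record the dimensions: $\dim\p_{\ov{2}}=(n-1)^2+2$, and $\dim\p_{\ov{1,2}}=(n-1)^2+1$. The first is even exactly when $n$ is odd, and the second exactly when $n$ is even; this gives the second equivalence in each item. For the first equivalence, one direction is the parity rule $\ind\q\equiv\dim\q\pmod 2$ from Section~\ref{sect:estimate-index}: if the index is $0$, the dimension is even.

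Conversely, suppose the relevant dimension is even. For $n=2m$ even, Theorem~\ref{thm:2-col-strange}(i) makes $(\tilde\co,\p_{\ov{1,2}})$ a strange pair. For $n=2m+1$ odd, Theorem~\ref{thm:2-col-strange}(ii) makes $(\tilde\co,\p_{\ov{2}})$ a strange pair. In either case Corollary~\ref{cor:3-cases}(iii), with $a=b=0$, gives index $0$. The small cases are covered because Theorem~\ref{thm:2-col-strange} is stated for all $n\ge4$.

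The only point needing care is the ordering claim in (i): that $\tilde\co$ is the unique two-part orbit of smallest dimension above $\co_{\sf pr}^{\lg 2\rg}$, and that there are no two-part orbits of dimension in between. The explicit formula $n+2\lb_2$ settles this directly.
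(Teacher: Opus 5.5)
Your proposal is correct and follows the paper's own (implicit) argument. Part (i) combines Theorem~\ref{thm:no-strange} with Theorem~\ref{thm:2-col-strange}, and parts (ii)--(iii) follow from the strange pairs of Theorem~\ref{thm:2-col-strange} via Corollary~\ref{cor:3-cases}(iii) together with the parity rule $\dim\q\equiv\ind\q\pmod 2$; your explicit formula $\dim\co(\lb_1,\lb_2)=n^2-n-2\lb_2$ simply makes precise the ordering of two-part orbits that the paper leaves to the reader.
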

Of course, if it is proved somehow that $\p$ is a complementary parabolic subalgebra, then $\ind\p=0$. 
But in order to pick a right candidate up, it is better to know beforehand that $\p$  is Frobenius (and 
$\dim\p=\dim\co$). The parabolic subalgebras of semisimple Lie algebras form a special class of 
{\it seaweed subalgebras\/} (=\,{\it seaweeds}), and there is a general formula for the index of seaweeds.
That is, regardless of strange orbits and complementary subalgebras, one can check parts {\sf (ii)} and 
{\sf (iii)} in Corollary~\ref{cor:sledstvie}. Moreover, for $\sln$, there are other methods for computing 
the index of seaweeds. We say more about this in Section~\ref{sect:problems}.

\begin{rmk}    \label{rem:parab-approach}
The ``parabolic'' approach to strangeness in Theorem~\ref{thm:2-col-strange}  
works in many other cases. For instance, one can provide complementary parabolic subalgebras for the orbits $\co(3,2,1)$ 
and $\co(4,1,1)$ in $\sel_6$. Combining this with results of Section~\ref{sect:strange} 
and~\ref{sect:str-glv}, we obtain a curious corollary that {\bf all} orbits in $\mathfrak{sl}_n$, $n\le 5$ are strange; while for $n=6$, the only non-strange nilpotent orbit is $\co(5,1)$. 
\end{rmk}

\subsection{Partitions with three parts} Here we describe a family of partitions with three parts that give rise to strange 
orbits with parabolic complements.
Our formulae and proofs depend on the resudue of $n$ modulo $3$.

\begin{thm}        \label{thm:3-column}  
For any $n\ge 4$, there are the following strange orbits:
\begin{itemize}
\item[{\sf (i)}] If $n=3m$, then $\co(m{+}2,m{-}1,m{-}1)$ is strange and $\p_{\ov{1,3}}$ is a complementary subalgebra;
\item[{\sf (ii)}] If $n=3m{+}1$, then $\co(m{+}3,m{-}1,m{-}1)$ is strange and $\p_{\ov{3}}$ is a complementary subalgebra;
\item[{\sf (iii)}] If $n=3m{+}2$, then $\co(m{+}3,m,m{-}1)$ is strange and $\p_{\ov{3}}$ is a complementary subalgebra.
\end{itemize}
\end{thm}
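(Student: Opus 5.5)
The plan mirrors the proof of Theorem~\ref{thm:2-col-strange}. Write $\tilde\p$ for the parabolic subalgebra named in each case. The first step is the dimension check $\dim\tilde\p=\dim\co$.

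For $\blb=(m{+}2,m{-}1,m{-}1)$ with $n=3m$, the dual partition is $(3^{m-1},1^3)$. Hence $\dim\co=n^2-9(m-1)-3=9m^2-9m+6$. On the other side, $\dim\p_{\ov{1,3}}=n^2-1-\dim\p^{\sf nil}_{\ov{1,3}}$, and the nilradical of the parabolic with blocks $(1,2,n{-}3)$ has dimension $2+(n-3)+2(n-3)=3n-7$. This gives $9m^2-9m+6$ as well.

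The other two cases are checked the same way:
\begin{itemize}
\item[{\sf (ii)}] For $(m{+}3,m{-}1,m{-}1)$ the dual partition is $(3^{m-1},1^4)$. For $\p_{\ov 3}$ one has $\dim\p^{\sf nil}=3(n-3)$.
\item[{\sf (iii)}] For $(m{+}3,m,m{-}1)$ the dual partition is $(3^{m-1},2,1^3)$, and the same parabolic $\p_{\ov 3}$ is used.
\end{itemize}
Once the dimensions agree, $\tilde\p\oplus(\sln)^x=\sln$ is equivalent to $\tilde\p\cap(\sln)^x=\{0\}$. Equivalently, $(SL_n)^x$ should have a point in $SL_n/\tilde P$ with finite stabiliser. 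The task therefore reduces to exhibiting a single flag $\eus F$ of the appropriate type (a pair $\gV_1\subset\gV_3$ in case (i), a single $3$-plane $\gV_3$ in cases (ii) and (iii)) such that no nonzero element of $(\sln)^x$ preserves it.

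Next, put $x$ in Jordan form with blocks $\lg v_1,\dots,v_{\lb_1}\rg$, $\lg w_1,\dots,w_{\lb_2}\rg$, $\lg u_1,\dots,u_{\lb_3}\rg$, where $x{\cdot}v_j=v_{j-1}$ and similarly in the other blocks. Following Theorem~\ref{thm:2-col-strange}, I would take $\gV_3$ spanned by generic combinations of the \emph{top} vectors of the blocks. For (ii) and (iii) a first try is
\[
\gV_3=\lg v_{\lb_1},\ v_{\lb_1-1}+w_{\lb_2},\ v_{\lb_1-2}+u_{\lb_3}\rg .
\]
For (i) I would take $\gV_1=\lg v_{\lb_1}\rg$ together with this $\gV_3$. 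The shifted indices in the first block play the role of $v_m+v_{2m}$ in the two-block case.

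An element $z\in(\gln)^x$ is given by the Arnold description (Fig.~\ref{fig:centraliser}). In each of the nine block pairs, $z$ is an upper-triangular Toeplitz map with the allowed number of free parameters. So $z$ is determined by its values on the top vectors $v_{\lb_1},w_{\lb_2},u_{\lb_3}$. The condition $z\gV_3\subset\gV_3$, together with $z\gV_1\subset\gV_1$ in case (i), gives linear equations on these parameters. The condition $\operatorname{tr}z=0$ is then used to remove the scalar part.

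The main obstacle is this last computation. One must verify that the linear system has only the trivial solution, and that the guessed flag is actually generic enough. The count is tight: the number of equations equals $\dim\tilde\p^{\perp}$-type data, which matches $\dim(\sln)^x$ only after the dimension identity above. If the first guess leaves a nonzero solution, I would perturb the flag by mixing in lower vectors, for example $v_{\lb_1-1}+w_{\lb_2}+u_{\lb_3-1}$. A fallback is to argue by induction on $m$ as in Theorem~\ref{thm:strange-gln}: restrict to $\Ima(x)$, where the partition drops each part by one and the dimensions match a smaller instance. For small $m$, the base cases can be checked directly, e.g.\ via Remark~\ref{rem:parab-approach}.
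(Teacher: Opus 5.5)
\textbf{Gap in case (iii).} Your reduction is the paper's: a dimension count, then a flag of the right type with trivial stabiliser in $(\sln)^x$. Your dimension checks are correct, and in cases (i) and (ii) your flags are exactly the ones the paper uses. The problem is case (iii). There your uniform guess $\gV_3=\lg v_{\lb_1},\,v_{\lb_1-1}+w_{\lb_2},\,v_{\lb_1-2}+u_{\lb_3}\rg$ is \emph{not} generic enough.

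Take $\blb=(m{+}3,m,m{-}1)$ with $m\ge 2$, and define $z\in(\gln)^x$ by
\[
z(v_{m+3})=v_{m+1}+u_{m-1},\qquad z(w_m)=-v_m-u_{m-2},\qquad z(u_{m-1})=-v_{m-1}-u_{m-3}
\]
(with $u_j=0$ for $j\le 0$), extended by $zx=xz$. This is legitimate because $z(w_m)\in\Ker x^m$ and $z(u_{m-1})\in\Ker x^{m-1}$. All diagonal entries of $z$ vanish, so $z\in(\sln)^x$ and $z\neq 0$. Yet
\begin{itemize}
\item $z(v_{m+3})\in\gV_3$;
\item $z(v_{m+2}+w_m)=xz(v_{m+3})+z(w_m)=0$;
\item $z(v_{m+1}+u_{m-1})=x^2z(v_{m+3})+z(u_{m-1})=0$.
\end{itemize}
For instance, for $\blb=(5,2,1)$ take $z(v_5)=v_3+u_1$, $z(w_2)=-v_2$, $z(u_1)=-v_1$.

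This construction fails in the other two cases. In (i) the extra condition $z\gV_1\subset\gV_1$ rules it out, and in (ii) $\lb_1-3>\lb_2$ blocks the cancellation. In (iii) one has $\lb_1-3=\lb_2$ and $\lb_1-4=\lb_3$, which is exactly what the construction needs.

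Your fallbacks do not close the gap. The perturbed vector $v_{\lb_1-1}+w_{\lb_2}+u_{\lb_3-1}$ coincides with the original one when $\lb_3=1$ (i.e.\ $m=2$), so the same $z$ still works there. The induction via $\Ima(x)$ is not set up: your flag is spanned by top vectors, which do not lie in $\Ima x$.

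\textbf{The fix.} In (iii) the two lower blocks must be attached to the first block in the \emph{opposite} order. Take $\gV_3=\lg v_{m+3},\,v_{m+1}+w_m,\,v_{m+2}+u_{m-1}\rg$, as the paper does.
\begin{itemize}
\item Write $z(v_{m+3})=\alpha v_{m+3}+\beta(v_{m+1}+w_m)+\gamma(v_{m+2}+u_{m-1})$.
\item The vector $z(v_{m+2}+u_{m-1})=xz(v_{m+3})+z(u_{m-1})$ has $v_{m+1}$-coefficient $\gamma$, $w_m$-coefficient $0$ and $v_m$-coefficient $\beta$. Since $z(u_{m-1})\in\Ker x^{m-1}$ cannot alter these, membership in $\gV_3$ forces $\beta=\gamma=0$.
\item The conditions on $z(v_{m+1}+w_m)$ and $z(v_{m+2}+u_{m-1})$ then give $z(w_m)=\alpha w_m$ and $z(u_{m-1})=\alpha u_{m-1}$.
\item Hence $z=\alpha I$, and the trace condition gives $z=0$.
\end{itemize}

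Even in (i) and (ii) you stop short of solving the linear system. It is routine (the paper writes it out for $\blb=(5,2,2)$), but it is the substance of the proof. As case (iii) shows, its outcome depends delicately on how the flag is chosen.
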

\begin{proof}
First of all, one can check that $\p$ is Frobenius and $\dim\co=\dim\p$ in all three cases. Then, as in 
Theorem~\ref{thm:2-col-strange}, we choose $x\in\co$ in the Jordan normal form, where the consecutive 
sizes of blocks correspond to the parts indicated above. For instance, the first block in {\sf (i)} is of size 
$m+2$ and its Jordan basis is denoted by $(v_1,\dots,v_{m+2})$, etc. The main difficulty is to guess how 
to choose a flag $\eus F$ of prescribed shape such that $(\sln)^x\cap{\sf Stab}(\eus F)=\{0\}$. 

For {\sf (i)}, $\eus F=\{\gV_1\subset\gV_3\}$ and we take
$\gV_1=\lg v_{m+2}\rg$,  $\gV_3=\lg v_{m+2}, v_{m+1}{+}v_{2m+1}, v_m{+}v_{3m}\rg$.

For {\sf (ii)}, $\eus F=\{\gV_3\}$ and we take
$\gV_3=\lg v_{m+3}, v_{m+2}{+}v_{2m+2}, v_{m+1}{+}v_{3m+1}\rg$.

{\sf (iii)} Here again $\eus F=\{\gV_3\}$, and we take
$\gV_3=\lg v_{m+3}, v_{m+1}{+}v_{2m+3}, v_{m+2}{+}v_{3m+2}\rg$.
\\[.5ex]
To give a flavour of computations and avoid cumbersome notation, we look at case {\sf (i)} with $m=3$, 
i.e., $\blb=(5,2,2)$. According to Fig.~\ref{fig:arnold}, a generic element of $(\sel_9)^x$ is of the form
\begin{center}
{\small  $z=
\begin{pmatrix}
a_1 & a_2 &a_3 &a_4 &a_5 & b_1& b_2& c_1 & c_2 \\
0 & a_1 & a_2 &a_3 &a_4 & 0 &b_1& 0 &c_1 \\
0& 0 & a_1 & a_2 &a_3 & 0 &0 &0 & 0 \\
0 &0 &0 & a_1 & a_2 & 0 &0 &0 & 0 \\
0 &0 &0 & 0 &a_1 & 0 &0 &0 & 0 \\
0 &0 &0 & d_1 & d_2 & m_1 & m_2 & n_1 & n_2 \\
0 &0 &0 & 0 & d_1 & 0 & m_1 & 0 & n_1 \\
0 &0 &0 & e_1 & e_2 & p_1 & p_2 & q_1 & q_2 \\
0 &0 &0 & 0 & e_1 & 0 & p_1 & 0 & q_1 \\
\end{pmatrix}$, }
\end{center}
where $5a_1+2m_1+2q_1=0$. Here $\gV_1=\lg v_5\rg$ and $\gV_3=\lg v_5, v_4+v_7, v_3+v_9 \rg$.
Suppose that $z\in(\sel_9)^x$ preserves the flag $\{\gV_1\subset\gV_3\}$. Then

$v_5\stackrel{z}{\mapsto} (a_5,a_4,a_3,a_2,a_1,d_2,d_1,e_2,e_1)\in \gV_1$. Hence 
all these coordinates, except $a_1$, are equal to zero. Next, using these conditions, we must have

$v_4+v_7\stackrel{z}{\mapsto} (b_2,b_1,0,a_1,0, m_2,m_1,p_2,p_1)\in \gV_3$ \ and 

$v_3+v_9\stackrel{z}{\mapsto} (c_2,c_1,a_1,0, 0, n_2,n_1,q_2,q_1)\in \gV_3$. 

\noindent
This implies that $a_1=m_1=q_1$ and all other matrix entries of $z$ equal zero. Because of the trace 
condition, we conclude that if $z\in (\sel_9)^x$ preserves the flag, then $z=0$.
\end{proof}

Results Sections~\ref{sect:str-glv} and \ref{sect:two-col} suggest that strange orbits in $\sln$ are perhaps not so sparse, at least for small 
$n$. There are also other constructions (partly conjectural) that provide strange nilpotent orbits with 
parabolic complements. But my feeling is that, for large $n$, most of the nilpotent orbits in $\sln$ are not 
strange.

\section{Further perspectives and conjectures}   
\label{sect:problems}

\noindent
\subsection{Towards a classification of strange orbits and Frobenius subalgebras}
\label{subs:towards}
Results of Sections~\ref{sect:str-glv} and~\ref{sect:two-col} show that there are many strange 
non-spherical orbits in $\sln$. Moreover, our computations demonstrate that, for sufficiently large 
nilpotent $SL_n$-orbits, their strangeness is often related to the fact that they possess 
complementary {\bf parabolic} subalgebras. However, attempting to apply this idea to other simple Lie 
algebras, we face a disappointing fact that if $\ind\be=0$, then there are no other Frobenius parabolic 
subalgebras in $\g$. This follows from the formula for the index of {\it seaweeds\/} in semisimple Lie 
algebras, which is conjectured in~\cite[4.7]{ty} and proved in~\cite[Sect.\,8]{jos}. We call it the 
{\it Tauvel--Yu--Joseph ({=}TYJ) formula}. By definition, $\es$ is a seaweed if there are parabolic 
subalgebras $\p,\tilde\p\subset\g$ such that $\p+\tilde\p=\g$ and $\p\cap\tilde\p=\es$~\cite[Sect.\,2]{mmj}.
The TYJ formula exploits the {\it Kostant cascade\/} for Levi subalgebras of $\p$ and $\tilde\p$. 
If $\tilde\p=\g$, then $\es=\p$, i.e., any parabolic subalgebra is a seaweed.
Let $\el$ be a Levi subalgebra of $\p$. If $\ind\be=0$ and $\eus K(\el)$ 
is the Kostant cascade for $\el$, then the TYJ formula says that $\ind\p=\#\eus K(\el)$.
Thus, if $\p\ne\be$, then $\eus K(\el)\ne\varnothing$ and $\ind\p>0$.
Hence non-solvable Frobenius parabolics may exist only if $\ind\be>0$, i.e., for 
$\GR{A}{m}\ (m{\ge} 2), \GR{D}{2m+1}\ (m{\ge} 2), \GR{E}{6}$. 

For $\sln$, the index of seaweeds can be computed via {\it meander graphs\/} of Dergachev--Kirillov,
which is quite easy if $n$ is not too large. There is also an inductive formula that always works for $\sln$ 
and $\spn$~\cite{mmj}. For instance, a seaweed in $\sln$ (or $\gln$) is determined by two compositions of 
$n$, and the induction step reduces the problem to a seaweed in $\sel_{m}$ with $m<n$, i.e., to a pair of 
compositions of $m$, see~\cite[Theorem\,4.2]{mmj}.

Recall that $\ind\be(\sln)=\lfloor(n{-}1)/2\rfloor$, $\ind\be(\mathfrak{so}_{4m+2})=1$, and 
$\ind\be(\GR{E}{6})=2$. There are only few standard Frobenius parabolics in $\GR{D}{2m+1}$ and 
$\GR{E}{6}$, and it is not hard to list all of them. But we unable to detect any strange orbits in these two 
cases so far, not to mention strange orbits with parabolic complements. For $\sln$, the Frobenius 
parabolics are in abundance and their explicit description is not known yet.   

It is easily seen that if $\p=\el\oplus\p^{\sf nil}$ is a Frobenius parabolic subalgebra of minimal dimension, 
then $\dim\p=\dim\be+\ind\be$ and $[\el,\el]$ is a sum of $\ind\be$ copies of $\tri$. Using the TYJ  
formula, one easily computes that the set of dimensions of Frobenius parabolics is
\begin{itemize}
\item $\{\dim\be+1, \dim\be+3\}$ for $\mathfrak{so}_{4m+2}$, where $\dim\be=(2m+1)^2$;
\item $\{44,\,46,\,48,\, 52\}$ for $\GR{E}{6}$.
\end{itemize}
If $\p\subset\sln$ is a Frobenius parabolic subalgebra, then ($\dim\p$ is even and)
\[
        \frac{n(n+1)}{2}-1+\lfloor\frac{n{-}1}{2}\rfloor\le \dim\p\le n^2-n .
\]
But not all even integers in this interval occur as dimensions of Frobenius parabolics. A gap in this interval 
appears first for $n=6$, where the set of dimensions is $\{22,24,26,30\}$. For $\sel_9$, the interval is $[48,72]$ and the set of gaps is $\{58, 62, 64,68,70\}$. That is, the larger $n$, the more gaps we get.
Recall that the number $\eus M_{\sf Fr}$ is defined in Section~\ref{subs:numero}.

\begin{conj}    \label{conj:b-Frob}   
Let $\g$ be a simple Lie algebra. Then
\begin{itemize}
\item[\sf (i)] \  the value $\eus M_{\sf Fr}$ is always attained on Frobenius {\bf parabolic} subalgebras. More precisely, if $\ind\h=0$, then there is a Frobenius parabolic subalgebra $\p$ such that $\h\subset\p$.
\item[\sf (ii)] \ If\/ $\ind\be=0$, then 
\begin{itemize}
\item $\dim\h\le \dim\be$ for any Frobenius subalgebra $\h\subset\g$. 
\item only the spherical orbits are strange. 
\end{itemize}
\end{itemize}
\end{conj}

\begin{ex}    \label{ex:g_2}  \leavevmode\par
(a) \ Conjecture~\ref{conj:b-Frob}{\sf (i)} is true for $\sln$, because $\dim\p_{\ov{1}}$ is maximal among {\bf all} subalgebras; \\ \indent
(b) \ The small rank cases with $\ind\be=0$ can be verified by hands. \\
{\bf --} \ For $\g=\GR{G}{2}$, the maximal subalgebras are:
\\ \indent
\textbullet \ \ Two maximal parabolic subalgebras $\p_i$ ($i=1,2$), where $\dim\p_i=9$ \& $\ind\p_i=1$;
\\ \indent
\textbullet \  \ semisimple: $\tri$, $\tri\dotplus\tri$, and $\mathfrak{sl}_3$,  where $\ind\es=1$ or $2$. 
\\ 
Hence $\be=\be(\GR{G}{2})$ is a Frobenius subalgebra of maximal dimension, $\dim\be=8$. Two spherical nilpotent orbits in $\GR{G}{2}$ are of dimension 6 and 8.

\noindent
{\bf --} \ For $\GR{B}{2}=\GR{C}{2}$, one has $\dim\be=6$, and the argument is similar. For 
$\GR{B}{3}$ and $\GR{C}{3}$, the argument is more involved, but these cases are still manageable.
\end{ex}

Conjecture~\ref{conj:b-Frob}{\sf (ii)} would imply that if $\ind\be=0$, then $\gM_{\sf sph}=\gM_{\sf str}= \gM_{\sf Fr}=\dim\be$.

Let $\z(\g^x)$ denote the centre of $\g^x$. It is also the centraliser of $\g^x$ in $\g$.

\begin{conj}          \label{conj:z^2(e)}
If\/ $G{\cdot}e\subset\N$ is strange and $e'\in\z(\g^e)$, then $G{\cdot}e'$ is strange, too.
\end{conj}

If $G{\cdot}e$ is spherical, then $\z(\g^e)=\bbk e$. Therefore, this conjecture can only be helpful for 
non-spherical orbits. If Conjecture~\ref{conj:b-Frob} is true, then Conjecture~\ref{conj:z^2(e)} is 
needed mainly for $\sln$. For $e\in\N(\sln)$, the centre  $\z((\sln)^e)$ is the linear span
of $\{e^k\mid k\ge 1\}$. Hence specialising Conjecture~\ref{conj:z^2(e)} to $\sln$, we get the following 
conjectural generalisation of Theorem~\ref{thm:strange-gln}.

\begin{conj}
If\/ $\co=SL_n{\cdot}e\subset\N$ is strange, then so is $\co^{\lg k\rg}:=SL_n{\cdot}e^k$ $(k\ge1)$.
\end{conj}

For any $n\ge 3$, there is a unique orbit $\co(\blb)\subset\sln$ such that $\dim\co(\blb)=\dim\be+\ind\be$,
$\lb_1=3$, and $\lb_2<3$. Namely,
\begin{center}
\begin{tabular}{crlll}
(a) & $\sel_{2k}$: &  $\blb=(3,2^{k-2},1)$, & $\ind\be=k{-}1$, & $\dim\co(\blb)=2k^2+2k-2$; \\
(b) & $\sel_{2k+1}$: &  $\blb=(3,2^{k-1})$, & $\ind\be=k$, & $\dim\co(\blb)=2k^2+4k$. \\
\end{tabular}
\end{center}
Note that since $\lb_1>2$, this orbit is not spherical. Sometimes there can be other nilpotent orbits of
dimension $\dim\be+\ind\be$, e.g. $\co(4,1^3)$.
\begin{conj}    \label{conj:min-frob}
This orbit $\co(\blb)\subset\sln$ is strange and a complementary subalgebra is a Frobenius parabolic 
subalgebra of minimal dimension. 
\end{conj}

This has been verified for $n\le 7$, i.e., if the number of parts of $\blb$ is at most 3. 
To obtain a minimal Frobenius parabolic $\p=\p_A$ in $\sln$, one may take 
\[
   A=\begin{cases} 
   \{2,4,\dots,2k\}  &  \text{ if } n=2k+1, \\
   \{2,4,\dots,2p-2,2p+1,\dots,4p-1\}  &  \text{ if } n=4p, \\
   \{2,4,\dots,2p,2p+3,\dots,4p+1\}  &  \text{ if } n=4p+2. \\ 
   \end{cases}
\]
For all cases, $\#A=\ind\be$ and $[\el_A,\el_A]\simeq \tri\dotplus\ldots\dotplus\tri$, wih $\ind\be$ summands.

\begin{rmk}     \label{rem:(b)-strange}
In case (b), we have $\co(3,2^{k-1})=\co_{\sf pr}^{\lg k\rg}$, i.e., this orbit is strange by 
Theorem~\ref{thm:strange-gln}. However, a complementary subalgebra given therein is not parabolic. 
Therefore, in this case, the point of Conjecture~\ref{conj:min-frob} is to prove that a minimal Frobenius 
parabolic subalgebra is also complementary.
\end{rmk}

\subsection{Strange orbits and compatible Lie brackets}
\label{subs:strange-PC}
For a strange orbit $\co=G{\cdot}e\subset\g\simeq\g^*$, the splitting  $\g=\g^e\oplus\h$ yields a pencil
of compatible Lie brackets in the vector space $\g$, hence a pencil of compatible Poisson brackets in the
symmetric algebra $\cs(\g)$. Generic elements of this pencil provide Lie brackets
isomorphic to the initial semisimple Lie algebra $\g$, whereas degenerate Lie brackets correspond to
the semi-direct products $\g_{(0)}=\g^e\ltimes\h$ and $\g_{(\infty)}=\h\ltimes\g^e$, see~\cite[Section\,3]{bn}.
Then the Lenard--Magri scheme provides the Poisson--commutative subalgebra 
$\gZ_{\lg\g^e,\h\rg}$ of the Poisson algebra $\cs(\g)$.

If $\co$ and $G/H$ are spherical $G$-varieties, then 
$\ind\g_{(0)}=\ind\g_{(\infty)}=\ind\g$~\cite[Sect.\,2]{bn},\cite[Theorem\,2]{T}. In this case, the algebra 
$\gZ_{\lg\g^e,\h\rg}$ may have the maximal possible transcendence degree 
$\frac{1}{2}(\dim\g+\rk\g)=\dim\be$. This possibility realises if $\co$ is the maximal spherical nilpotent 
orbit. Then $\dim\co=\dim\be-\ind\be$ and $\h$ is a solvable algebra that contains $[\be,\be]$. In 
particular, if $\ind\be=0$, then $\h=\be$ and it is an interesting problem to describe $\gZ_{\lg\g^e,\be\rg}$.
It is likely that in this case $\trdeg \gZ_{\lg\g^e,\h\rg}=\dim\be$ and $\gZ_{\lg\g^e,\h\rg}$ is a polynomial 
ring. 

Yet another  possibility is to consider the {\it transverse Poisson structure\/} on the Slodowy slice
$e+\g^f$ for an $\tri$-triple $\{e,h,f\}$, i.e., for the orbit $\co=G{\cdot}e\subset\N$. This Poisson structure (bracket) is polynomial~\cite{cr} and its
linear part is nothing but the Lie-Poisson bracket for the Lie algebra $\g^f$.
If the orbit $\co$ is strange, then this Poisson structure splits into the linear and quadratic parts. This yields a pair of compatible Poisson brackets in $\cs(\g^f)$. The study of the corresponding 
Poisson-commutative subalgebras and integrable systems looks like a rather promising enterprise.

\appendix
\section{Actions of reductive subgroups on the nilpotent cone}  
\label{app}

\noindent
Here we prove the result that was used in Proposition~\ref{prop:nilp}.
In this section, $G$ is a connected semisimple group, $\N=\N(\g)$ is the nilpotent cone in $\g$, and $H$ 
is a connected reductive subgroup of $G$. We assume that  $H$ contains no infinite normal subgroups of 
$G$, i.e., $H$ is essentially proper in $G$.

\begin{lm}   \label{lm:app1}
The action $(H:\g)$ is stable and locally free. 
\end{lm}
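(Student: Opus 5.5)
We have to show two things for a connected reductive, essentially proper $H\subset G$ acting on $\g$ by the adjoint action: the action $(H:\g)$ is stable, and generic $H$-stabilisers in $\g$ are finite.

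\emph{Stability.} Since $H$ is reductive, the Killing form of $\g$ restricts to a non-degenerate form on $\h$. Hence $\g=\h\oplus\h^\perp$ as $H$-modules, and $\g\simeq\g^*$ is a self-dual $H$-module. I would then use the standard criterion that an orthogonal representation of a reductive group is stable. Concretely, $\g$ carries an $H$-invariant non-degenerate symmetric form, and a representation of a reductive group preserving such a form is stable; this is Luna's result, see \cite[\S\,7]{vp} or Vinberg's framework \cite{vi00}. One could also argue through the generic stabiliser: for orthogonal representations the generic isotropy group is reductive, which gives stability via Popov's criterion. I would cite this rather than re-prove it.

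\emph{Local freeness.} For a stable action, generic orbits are closed and the generic stabiliser $H_*$ is reductive. It therefore suffices to show that $\h^x=\h\cap\g^x$ is zero for some $x\in\g$. Take $x$ regular semisimple, so that $\g^x=\te$ is a Cartan subalgebra. We then need $g\in G$ with $g{\cdot}\te\cap\h=\{0\}$, and this is exactly what Lemma~\ref{lm:ss} provides: for regular semisimple $x$ there is $y\in G{\cdot}x$ with $\g^y\cap\h=\{0\}$. So for $y$ in a nonempty open subset of $\g$ (regular semisimple elements conjugate to such a $y$, an open condition by semicontinuity), $\dim\h^y=0$. Hence the action $(H:\g)$ is locally free.

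The main subtlety is the stability step. Local freeness follows cheaply from Lemma~\ref{lm:ss} together with upper semicontinuity of $\dim\h^y$. Stability, by contrast, requires invoking the theorem on orthogonal representations, or an equivalent argument, for example via Matsushima's criterion, using that $\g^y\cap\h=0$ generically so that the generic stabiliser $H^y$ is finite and hence reductive, after which Popov's theorem gives stability.
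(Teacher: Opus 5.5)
Your proof is correct. The local-freeness half is essentially the paper's argument. The paper notes that generic $G$-orbits in $\g$ are $\simeq G/T$ and that $H$ acts locally freely on $G/T$ by Lemma~\ref{lm:ss}. That is the same as your ``one $y$ with $\h\cap\g^y=\{0\}$ plus semicontinuity''. One small correction: the relevant open set is $\{z\in\g\mid \h\cap\g^z=\{0\}\}$, not a set of conjugates of $y$.

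For stability you take a genuinely different route.
\begin{itemize}
\item The paper does not use the invariant form. It starts from the stability of the adjoint action $(G:\g)$ and uses an inheritance result, \cite[Theorem\,4]{vi00}, to pass stability down to the reductive subgroup $H$.
\item You regard $\g$ as an $H$-module carrying the $H$-invariant non-degenerate symmetric Killing form, and apply Luna's theorem that orthogonal representations of reductive groups are stable.
\end{itemize}
Your argument needs nothing about the $G$-action beyond preservation of the form, so it works for any orthogonal module. The paper's argument needs nothing about the form, so it works for any affine variety with a stable $G$-action. It also reduces both halves of the lemma to the single object ``$H$ acting on $G/T$''.

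One caution about your fallback: it is not an equivalent argument in the generality of the lemma. Popov's criterion (reductive generic stabiliser implies stable) is a theorem about semisimple groups. For reductive groups with a non-trivial central torus it fails in general: homotheties of $\bbk$ have trivial generic stabiliser, yet no non-zero orbit is closed. Matsushima's criterion only tells you that generic orbits are affine, not that they are closed. So that variant proves the lemma only for semisimple $H$, which happens to be the case used in Proposition~\ref{prop:nilp}. The orthogonality theorem is what covers arbitrary connected reductive $H$.
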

\begin{proof}
Since the adjoint action $(G:\g)$ is stable and $H$ is reductive, the stability of $(H:\g)$ follows 
from~\cite[Theorem\,4]{vi00}. Generic $G$-orbits in $\g$ are semisimple and isomorphic to $G/T$, and 
the $H$-action on $G/T$ is locally free (cf. Lemma~\ref{lm:ss}).
\end{proof}

Our goal is to carry this result over the nilpotent cone $\N\subset\g$. Fix a triangular decomposition of 
$\g$ and consider the corresponding simple roots $\Pi$ and dominant weights $\mathfrak X_+$ in $\te^*$, 
where $\te=\Lie T$. Write $\gV_\lb$ for the simple $G$-module with highest weight $\lb\in\fX_+$. The 
highest weight of the dual  $G$-module is denoted by $\lb^*$, i.e.,
$(\gV_\lb)^*=\gV_{\lb^*}$. For a $G$-variety $X$, consider
the isotypic decomposition of the algebra of regular functions~\cite[II.3.1]{kr84}: 
\[
     \bbk[X]=\bigoplus_{\lb\in\fX_+}\bbk[X]_{(\lb)} 
\]
and set $\Gamma(X):=\{\lb\mid \bbk[X]_{(\lb)}\ne 0\}$. If $X$ is irreducible, then $\Gamma(X)$ is a 
monoid. If $X$ has a dense $G$-orbit,  then $\dim \bbk[X]_{(\lb)}< \infty$ for all $\lb\in\Gamma(X)$
and $m_\lb(X)=\dim\bbk[X]_{(\lb)}/\dim\gV_\lb$ is called the {\it multiplicity\/} of $\lb$ in $\bbk[X]$. Recall 
some standard facts on multiplicities.
\begin{itemize}
\item For a homogeneous space $G/F$, one has 
$m_\lb(G/F)=\dim(\gV_{\lb^*})^F$ for any $\lb\in\fX_+$ (Frobenius reciprocity).
\item $\Gamma(\N)=\fX_+\cap \eus Q$, where $\eus Q$ is the {\it root lattice}, and 
$m_\lb(\N)=m_\lb^0$, the zero weight multiplicity in $\gV_\lb$~\cite{ko63}.
\end{itemize}

\begin{lm}   \label{lm:app2}
$\bbk[\N]^H\ne \bbk$, i.e., $H$ has non-trivial invariants in $\bbk[\N]$.
\end{lm}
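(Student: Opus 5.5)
The plan is to compute $\bbk[\N]^H$ through the isotypic decomposition of $\bbk[\N]$ and reduce the question to the existence of a suitable nonzero highest weight with $H$-fixed vectors. For each $\lb\in\fX_+$, the isotypic component $\bbk[\N]_{(\lb)}$ is $G$-isomorphic to $\gV_\lb\otimes\bbk^{m_\lb(\N)}$. By Kostant's result recalled above, $m_\lb(\N)=m^0_\lb$ for $\lb\in\fX_+\cap\eus Q$. Hence
\[
  \dim \bigl(\bbk[\N]_{(\lb)}\bigr)^H = m^0_\lb\cdot \dim(\gV_\lb)^H .
\]
It therefore suffices to find a \emph{nonzero} $\lb\in\fX_+\cap\eus Q$ with $m^0_\lb\ne0$ and $(\gV_\lb)^H\ne0$. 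Any nonzero element of the resulting component is then a non-constant $H$-invariant, since the constants lie in $\bbk[\N]_{(0)}$. For $\lb\in\fX_+\cap\eus Q$, the condition $m^0_\lb\ne 0$ is automatic. Indeed, $0$ is dominant, lies in $\eus Q$, and satisfies $0\preccurlyeq\lb$. By the standard saturation property of weights of simple modules, $0$ is then a weight of $\gV_\lb$.

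So the key step is to produce $\lb\ne0$ in $\fX_+\cap\eus Q$ with $(\gV_\lb)^H\ne0$. For this I use the homogeneous space $G/H$. If $H=\{1\}$ there is nothing to prove, so I assume $H\ne\{1\}$. Since $H$ is reductive, $G/H$ is affine by Matsushima's theorem. Since $H$ is essentially proper, it is in particular a proper subgroup, so $\dim G/H>0$ and $\bbk[G/H]\ne\bbk$. By Frobenius reciprocity, $m_\mu(G/H)=\dim(\gV_{\mu^*})^H$. Hence the monoid $\Gamma(G/H)$ contains some $\mu\ne0$, and for this $\mu$ we have $(\gV_{\mu^*})^H\ne 0$. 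Here I use that $G/H$ is irreducible, so $\bbk[G/H]$ is a domain and $\Gamma(G/H)$ is indeed a monoid.

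To land in the root lattice, let $d$ be the exponent of the finite group $\fX/\eus Q$, where $\fX$ is the weight lattice. Then $d\mu\in\eus Q$, and $d\mu\in\Gamma(G/H)$ by the monoid property (products of highest weight vectors of weights $\mu$ are nonzero in the domain $\bbk[G/H]$). Consequently $(\gV_{(d\mu)^*})^H\ne0$. Set $\lb=(d\mu)^*=d\mu^*$; then $\lb$ is nonzero and dominant, and $\lb\in\eus Q$ because $\eus Q$ is stable under $\mu\mapsto\mu^*=-w_0\mu$. Combined with the first paragraph, $(\bbk[\N]_{(\lb)})^H\ne0$, which proves $\bbk[\N]^H\ne\bbk$.

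I do not expect a serious obstacle. The only delicate points are the passage to the root lattice, handled by the monoid argument above, and the fact $m^0_\lb\ne0$ for dominant $\lb\in\eus Q$, which I take as standard.
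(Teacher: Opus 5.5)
Your proof is correct and follows the paper's argument: take a nonzero $\mu\in\Gamma(G/H)$, scale it into the root lattice, and combine Frobenius reciprocity with Kostant's description $\Gamma(\N)=\fX_+\cap\eus Q$ to get a nonzero $H$-fixed vector in an isotypic component $\bbk[\N]_{(\lambda)}$ with $\lambda\ne 0$. The differences are cosmetic. You pass to the dual weight $(d\mu)^*$ and use the $-w_0$-stability of $\eus Q$, whereas the paper uses the self-duality of $\Gamma(G/H)$ for reductive $H$. You also spell out, via Matsushima and the saturation of weights, two points the paper leaves implicit: that $\Gamma(G/H)\ne\{0\}$, and that $m^0_\lambda>0$.
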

\begin{proof}
Since $H$ is reductive, the monoid $\Gamma(G/H)$ is self-dual, i.e., $\Gamma(G/H)=\Gamma(G/H)^*$.
Therefore,  $\lb\in\Gamma(G/H)$ if and only if $(\gV_\lb)^H\ne 0$. Given $\lb\in\Gamma(G/H)$, 
there is $a\in\BN$ such that $a\lb\in\eus Q$. Then $\gV_{a\lb}\subset\bbk[\N]$ and 
$0\ne (\gV_{a\lb})^H\subset\bbk[\N]^H$.
\end{proof}

Since $\bbk[\N]^H\ne \bbk$, we have $\N\md H\ne \{pt\}$ and there are non-trivial closed $H$-orbits in 
$\N$. 
\begin{lm}          \label{lm:app3}
If there is a closed $H$-orbit contained in $\N_{\sf reg}$, then the action $(H:\N)$ is stable and locally free.
\end{lm}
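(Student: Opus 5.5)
Let $H{\cdot}y$ be a closed $H$-orbit with $y\in\N_{\sf reg}$. Local freeness comes out immediately, and stability will be deduced from it by a dimension count on the quotient, using the standard fact that an action with a closed orbit of maximal dimension is stable.

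\emph{Local freeness.} By Proposition~\ref{prop:nilp}, there is $y'\in G{\cdot}y=\N_{\sf reg}$ with $\g^{y'}\cap\h=\{0\}$, so generic stabilisers for $(H:\N)$ are finite. If we want to avoid that proposition, an alternative is to work from the closed orbit itself. Since $H$ is reductive and $H{\cdot}y$ is closed, Matsushima's criterion makes $H^y$ reductive. Moreover $H^y\subset G^y$, and $(G^y)^0$ is unipotent because $y$ is regular nilpotent. Hence $(H^y)^0$ is a reductive unipotent group, i.e.\ trivial, so $H^y$ is finite and $\dim H{\cdot}y=\dim H$. Either way, generic $H$-orbits in $\N$ have dimension $\dim H$.

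\emph{Stability.} Let $\pi_H:\N\to\N\md H$ be the quotient. Every fibre of $\pi_H$ contains a unique closed orbit, and each orbit in it has dimension at most $\dim H$. The fibre $F=\pi_H^{-1}(\pi_H(y))$ contains the closed orbit $H{\cdot}y$ of dimension $\dim H$, which is the maximal possible.

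Next we show that $F=H{\cdot}y$. Suppose some $z\in F$ lies outside $H{\cdot}y$. Then $\ov{H{\cdot}z}\supset H{\cdot}y$, and the boundary $\ov{H{\cdot}z}\setminus H{\cdot}z$ has dimension strictly less than $\dim H{\cdot}z\le\dim H$. This contradicts $\dim H{\cdot}y=\dim H$. So $F=H{\cdot}y$ and $\dim F=\dim H$.

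By upper semicontinuity of fibre dimension for $\pi_H$ (applied on the irreducible variety $\N$), there is a nonempty open $U\subset\N\md H$ over which all fibres have dimension $\dim H$; for this we use that $\pi_H(y)$ lies in the closure of any such locus, so the generic fibre dimension is at most $\dim F=\dim H$. A fibre of dimension $\dim H$ is a finite union of orbits of dimension at most $\dim H$, and its closed orbit is in its closure. The same argument as above then shows that generic fibres consist of a single closed orbit whose stabiliser is finite. Thus closed orbits are dense and $(H:\N)$ is stable.

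The main obstacle is the semicontinuity step. We need generic fibre dimension to be at most the dimension of a \emph{special} fibre. This holds because the minimum of fibre dimension over a dominant morphism is attained on an open set, and that minimum is at most $\dim F$. Here $\dim F=\dim H$, while every fibre contains a $\dim H$-dimensional orbit generically, by local freeness. So generic fibres have dimension exactly $\dim H$, and the argument that $F=H{\cdot}y$ applies to them as well, using that the closed orbit in each fibre has dimension $\dim H$.
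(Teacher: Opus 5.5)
Your local-freeness argument via Matsushima's criterion is exactly the paper's: $H^y$ is reductive, $(H^y)^0\subset(G^y)^0$ is unipotent, so $H^y$ is finite. Drop the first option, though. For semisimple $\h$, Proposition~\ref{prop:nilp} is deduced from Theorem~\ref{thm:app}, which rests on this very lemma, so invoking it here is circular.

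For stability, the paper simply cites \cite[Theorem\,1]{vi00}: a closed orbit of maximal dimension forces stability. You name this fact in your first sentence, and citing it would finish the proof. But the derivation you then give has a genuine gap. Your proof that $F=\pi_H^{-1}(\pi_H(y))$ equals $H{\cdot}y$ is fine. The problem is the passage to generic fibres. Semicontinuity of fibre dimension plus local freeness only tell you that a generic fibre $F_u$ has dimension $\dim H$ and contains \emph{some} orbit of dimension $\dim H$. Your boundary argument needs the \emph{closed} orbit of $F_u$ to have dimension $\dim H$, and in your last paragraph you simply assume this.

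It does not follow from what you have established. For $\bbk^*$ acting on $\bbk$ by scalars, the unique fibre $\bbk$ has dimension $1=\dim H$ and contains the one-dimensional orbit $\bbk\setminus\{0\}$, yet its closed orbit is $\{0\}$. The claim that a fibre is a finite union of orbits is also unjustified. Dimension counting alone cannot transport the information at the special point $\pi_H(y)$ to the generic point of $\N\md H$.

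The missing ingredient is that $\pi_H$ maps closed $H$-stable subsets to closed subsets. Put $Z=\{z\in\N\mid \dim H{\cdot}z<\dim H\}$, which is closed and $H$-stable. Then $\pi_H(Z)$ is closed, and $\pi_H(y)\notin\pi_H(Z)$ because $F=H{\cdot}y$ does not meet $Z$. This is where your correct computation of $F$ really gets used. Over the nonempty open set $U=(\N\md H)\setminus\pi_H(Z)$, every orbit in every fibre has dimension $\dim H$, including the closed one. Your boundary argument then shows that each such fibre is a single closed orbit. Thus $\pi_H^{-1}(U)$ is a dense open subset of $\N$ consisting of closed orbits, i.e.\ $(H:\N)$ is stable.
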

\begin{proof}
If $H{\cdot}x\subset \N_{\sf reg}$ is closed, then $H^x$ is reductive. On the other hand,
$H^x=H\cap G^x$ and $(G^x)^0$ is unipotent. Hence $H^x$ must be finite. Thus, $H{\cdot}x$ is a closed 
orbit of maximal dimension. This implies stability, see e.g.~\cite[Theorem\,1]{vi00}.
\end{proof}

Recall that $\N_{\sf reg}=\co_{\sf pr}$ is the dense $G$-orbit in $\N$ and 
$\N_{\sf sing}:=\N\setminus\N_{\sf reg}$ is the singular locus of $\N$, which is of codimension 2 in $\N$. 
If $G$ is simple, then $\N_{\sf sing}$ is irreducible and hence it is the closure of a  $G$-orbit. In general, if 
$\g=\g_1\dotplus\ldots\dotplus\g_t$ is the sum of $t$ simple ideals, then $\N_{\sf sing}$ has $t$ irreducible
components. Namely, in an obvious notation, if 
\[
   Y_i=\N_{\sf sing}(\g_i) \times \prod_{j\ne i}\N(\g_j)\subset \prod_{i=1}^t\N(\g_i)=\N ,
\] 
then $\N_{\sf sing}(\g)=\bigcup_{i=1}^t Y_i$.

\begin{lm}          \label{lm:app4}
There are closed $H$-orbits that are not contained in $\N_{\sf sing}$.
\end{lm}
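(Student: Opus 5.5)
We must show that some closed $H$-orbit in $\N$ is not contained in $\N_{\sf sing}$. The natural route is through invariants: it suffices to find $f\in\bbk[\N]^H$ that is non-constant but vanishes identically on $\N_{\sf sing}$. Indeed, suppose every closed $H$-orbit in $\N$ lay in $\N_{\sf sing}$. Every fibre of the quotient map $\N\to\N\md H$ contains a unique closed orbit, and $f$ is constant on that fibre. Hence $f$ would take on each fibre the value it takes on a point of $\N_{\sf sing}$, namely $0$. So $f\equiv 0$, a contradiction. (Since $H$-invariants are constant on orbit closures, this is the standard argument.) Thus the task reduces to producing $H$-invariants in the ideal $I(\N_{\sf sing})\subset\bbk[\N]$.

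To do this, I would work componentwise. Since $\N_{\sf sing}=\bigcup_i Y_i$, it is enough to find an $H$-invariant vanishing on every $Y_i$. A product of invariants $f_i$, with $f_i$ vanishing on $Y_i$, would do, as long as the product is non-zero. Because $\N$ is irreducible, $\bbk[\N]$ is a domain, so a product of non-zero $f_i$ is non-zero, and it is non-constant because it vanishes somewhere. So fix $i$.

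The key step is to produce, for each $i$, a non-zero invariant $f_i\in\bbk[\N]^H$ vanishing on $Y_i$. I would argue via highest weights. Take $\lb\in\Gamma(G/H)$, so that $(\gV_\lb)^H\neq0$; as in Lemma~\ref{lm:app2}, choose $\lb$ in $\eus Q$ after scaling. I would want $\lb$ to have a non-zero component on the simple factor $\g_i$, and argue that for large multiples $a\lb$ the copy of $\gV_{a\lb}$ in $\bbk[\N]$ vanishes on $Y_i$. The mechanism is that the restriction $\bbk[\N]\to\bbk[Y_i]$ is $G$-equivariant, and $\bbk[\N_{\sf sing}(\g_i)]$ has "smaller" weights than $\bbk[\N(\g_i)]$. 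Concretely, $m_\mu(\ov{\co})\le\dim(\gV_\mu)^{G^e}$ for the subregular orbit of $\g_i$. Given $m_\mu(\N)=m^0_\mu$ and Frobenius reciprocity, I expect the multiplicity of $\gV_{a\lb}$ to drop strictly on $Y_i$, so that some non-zero $H$-fixed vector lies in the kernel.

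Alternatively, and more cleanly, I would use a dimension count. We have $\dim Y_i=\dim\N-2$. By Lemma~\ref{lm:app1} and the fact that $\N$ is a fibre of $\pi$ inside the stable locally free $(H:\g)$, $\dim\N\md H$ should equal $\dim\N-\dim H$. If I could show $\dim Y_i\md H<\dim\N\md H$, then the closed subvariety image of $Y_i$ in $\N\md H$ is proper, and its ideal gives $f_i$. This requires only that generic $H$-orbits in $Y_i$ have dimension at least $\dim H-1$.

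The main obstacle is establishing this dimension inequality, i.e.\ that closed orbits of $\N\md H$ are generically of dimension $\dim H$ at all. That is essentially what we are trying to prove, so there is a risk of circularity. I would therefore first try the representation-theoretic route: compare $\dim\bbk[\N]_{(a\lb)}^H$, which grows like $a^{\dim\N\md H}$, with the corresponding count on $Y_i$. I would also use Lemma~\ref{lm:app2}-type constructions to show that the growth rate on $Y_i$ is smaller, by exploiting $\codim Y_i=2$ together with normality of $\N$.
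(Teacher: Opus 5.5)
Your reduction is correct, and it is organised slightly differently from the paper's. You take a product $\prod_i f_i$ of nonzero invariants $f_i$ vanishing on the components $Y_i$; this is legitimate because $\bbk[\N]$ is a domain. The paper instead argues by contradiction, using Vinberg's maximal stable subvariety $\N_{st}$, which is irreducible and hence lies in a single $Y_i$. Your version avoids that citation.

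Both arguments then rest on the same claim, and your proposal does not prove it. The claim is that the ideal $I(Y_i)\subset\bbk[\N]$ contains a nonzero $H$-invariant. Since restriction $\bbk[\N]_{(\lb)}\to\bbk[Y_i]_{(\lb)}$ is surjective, this amounts to finding $\lb$ with $(\gV_\lb)^H\ne0$ and $m_\lb(\N)>m_\lb(Y_i)$ \emph{strictly}. This is the heart of the lemma.

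In your first route you only ``expect'' the strict drop, and the facts you cite do not yield it. The bound $m_\mu(\ov{\co})\le\dim(\gV_{\mu^*})^{G^e}$ for subregular $e$ and Kostant's $m_\mu(\N)=m^0_\mu=\dim(\gV_{\mu^*})^{G^{\tilde e}}$ for regular $\tilde e$ are fixed-point dimensions for two different stabilisers. Nothing you say compares them, let alone strictly. The paper supplies exactly this input from Broer~\cite[Corollary\,4.7]{br93}: $m_{\mu}(\N(\g_i))-m_{\mu}(\N_{\sf sing}(\g_i))=m^{\ap}_{\mu}$, the multiplicity of a short simple root $\ap$ as a weight of $\gV_\mu$. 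This is positive once $\mu$ is large. Combined with the product structure of $\N$ and $Y_i$, it gives $m_\lb(\N)>m_\lb(Y_i)$ for suitable $\lb\in\Gamma(G/H)\cap\eus Q$.

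You must also justify that some $\lb\in\Gamma(G/H)\cap\eus Q$ has $\lb_i\ne0$. Otherwise $G_i$ would act trivially on the affine variety $G/H$, forcing $\g_i\subset\h$. Without $\lb_i\ne0$ the two multiplicities coincide.

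Your dimension-count route stalls exactly where you say it does. The bound $\dim\N\md H\ge\dim\N-\dim H$ is actually available unconditionally: $\N\md H$ is the fibre over the image of $0$ of the dominant map $\g\md H\to\g\md G$, and $\dim\g\md H=\dim\g-\dim H$ by Lemma~\ref{lm:app1}. What is missing is the estimate that generic $H$-stabilisers on $Y_i$ have dimension at most $1$. That is a statement about generic stabilisers of the same nature as the one being proved, and you give no argument for it.

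Finally, normality of $\N$ and $\codim Y_i=2$ only give $\bbk[\N]=\bbk[\N_{\sf reg}]$, i.e.\ Kostant's multiplicities. They say nothing about $\bbk[Y_i]$, so your growth-rate comparison again reduces to the unproved strict inequality.
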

\begin{proof}
Assume the contrary. Then the action $(H:\N)$ is not stable and by~\cite[n.\,3]{vi00} there is the 
{\bf maximal} irreducible subvariety $\N_{st}\subset \N$ such that $(H:\N_{st})$ is stable and 
$\bbk[\N]^H=\bbk[\N_{st}]^H$. Actually, $\N_{st}$ is the closure of the union of all closed $H$-orbits. Hence 
$\N_{st}\subset\N_{\sf sing}$. If $\g=\g_1\dotplus\ldots\dotplus\g_t$, then there is at least one index
$i\in[1,t]$ such that $\N_{st}\subset Y_i$.  

Under our assumptions, we have $\bbk[\N]^H=\bbk[Y_i]^H$. Hence the ideal of $Y_i$ in $\bbk[\N]$ does 
not contains $H$-invariants. For the dominant weights of $\g$, we have
\[
     \fX_+(\g)=\fX_+(\g_1)\times\ldots \times\fX_+(\g_t)
\]
and hence $\lb=(\lb_1,\dots,\lb_t)$ with $\lb_j\in \fX_+(\g_j)$. Since $\g_i\not\subset\h$, one can find
$\lb\in\Gamma(G/H)\cap \eus Q$ such that $\lb_i\ne 0$. Let us compare the multiplicities of 
$\gV_\lb=\gV_{\lb_1}\otimes\ldots\otimes \gV_{\lb_t}$
in $\bbk[\N]$ and $\bbk[Y_i]$. The structure of $\N$ and $Y_i$ shows that
\[
  m_\lb(\N)- m_\lb(Y_i)=m_{\lb_i}(\N(\g_i))- m_{\lb_i}(\N_{\sf sing}(\g_i)) .
\]
Let $\ap$ be a short simple root of $\g_i$. (In the simply-laced case, all simple roots are assumed to be 
short.)  For the simple Lie algebra $\g_i$, it follows from~\cite[Corollary\,4.7]{br93} that
\[
  m_{\lb_i}(\N(\g_i))- m_{\lb_i}(\N_{\sf sing}(\g_i))=m_{\lb_i}^{\ap} ,
\]
the multiplicity of weight $\ap$ in the $\g_i$-module $\gV_{\lb_i}$. Thus, the multiplicity of $\gV_\lb$ in 
the ideal of the subvariety $Y_i\subset \N$ equals $m_{\lb_i}^{\ap}$. If $\lb\in \Gamma(G/H)\cap \eus Q$ 
is sufficiently large, then so is $\lb_i$ and hence $m_{\lb_i}^{\ap}>0$, which yields an $H$-invariant in the ideal of $Y_i$. This contradiction shows that there must be closed $H$-orbits in $\N_{\sf reg}$.
\end{proof}
Combining Lemmata~\ref{lm:app3} and \ref{lm:app4}, we obtain
\begin{thm}     \label{thm:app}
The action of $H$ on $\N$ is stable and locally free.
\end{thm}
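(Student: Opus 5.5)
The statement follows by combining Lemmata~\ref{lm:app3} and~\ref{lm:app4}; the only work is to pass from "some closed orbit outside $\N_{\sf sing}$" to the hypothesis of Lemma~\ref{lm:app3}. By Lemma~\ref{lm:app4}, there is a closed $H$-orbit $H{\cdot}x\subset\N$ with $H{\cdot}x\not\subset\N_{\sf sing}$. Since $\N_{\sf sing}$ and $\N_{\sf reg}$ are $G$-stable, hence $H$-stable, the orbit $H{\cdot}x$ lies entirely in one of them. Therefore $H{\cdot}x\subset\N_{\sf reg}$.

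Next, Lemma~\ref{lm:app3} applies to this orbit. For the reader, I would recall its mechanism. The stabiliser $H^x$ of a closed orbit in an affine variety is reductive by Matsushima's criterion. On the other hand, $H^x\subset G^x$, and $(G^x)^0$ is unipotent for $x\in\N_{\sf reg}$. Hence $(H^x)^0$ is a connected reductive unipotent group, so it is trivial, and $H^x$ is finite. Thus $\dim H{\cdot}x=\dim H$ is the maximal orbit dimension, and this closed orbit has finite stabiliser.

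Then I would invoke Vinberg's criterion~\cite[Theorem\,1]{vi00}: for a reductive group acting on an irreducible affine variety, the existence of a closed orbit of maximal dimension implies stability. The variety $\N$ is irreducible, so the action $(H:\N)$ is stable. In particular, generic orbits are closed with reductive stabilisers.

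It remains to check local freeness. The set of points whose stabiliser is finite is open: it is the locus where orbit dimension equals $\dim H$, which is open by upper semicontinuity of stabiliser dimension. This set contains $x$, and $\N$ is irreducible, so it is dense. Hence $(H:\N)$ is locally free.

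The only genuinely nontrivial step is Lemma~\ref{lm:app4}, which is already established via Broer's multiplicity formula. Given it, the remaining argument is just the $H$-stability of the stratification $\N=\N_{\sf reg}\sqcup\N_{\sf sing}$ and the standard facts above.
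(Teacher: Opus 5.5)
Your proposal is correct and is the paper's argument: Lemma~\ref{lm:app4} gives a closed $H$-orbit not contained in the $G$-stable (hence $H$-stable) set $\N_{\sf sing}$, so it lies in $\N_{\sf reg}$. Lemma~\ref{lm:app3} then yields stability and local freeness. Your unpacking of Lemma~\ref{lm:app3} is accurate but only restates what that lemma contains: Matsushima's criterion, unipotence of $(G^x)^0$, Vinberg's criterion, and openness of the finite-stabiliser locus.
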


\noindent
{\bf Data availability and conflict of interest statement.}

This article has no associated data. There is no conflict of interest. 



\begin{thebibliography}{Pa95}

\bibitem{ar71}      {\sc V.I.\,Arnol'd}.
On matrices depending on parameters, 
{\it Russ. Math. Surv.}, {\bf 26}\,(1971), no. 2, 29--43.

\bibitem{bo81}            {\sc W.\,Borho}.
\"Uber Schichten halbeinfacher Lie-Algebren, 
{\it Invent. Math.}, {\bf 65}\,(1981), 283--317.

\bibitem{bk79}           {\sc W.\,Borho} and {\sc H.\,Kraft}.
\"Uber Bahnen und deren Deformationen bei Aktionen reduktiver Gruppen, 
{\it Comment. Math. Helv.},  {\bf 54}\,(1979), 61--104.

\bibitem{br93}          {\sc A.\,Broer}.
Line bundles on the cotangent bundle of the flag variety, 
{\it Invent. Math.}, {\bf 113}\,(1993), 1--20.

\bibitem{CM}  {\sc D.H.\,Collingwood} and {\sc W.M.\,McGovern}. "{\it Nilpotent orbits in semisimple
  Lie algebras}", New York: Van Nostrand Reinhold, 1993.

\bibitem{cm10}      {\sc J.-Y.~Charbonnel} and {\sc A.~Moreau}.
The index of centralizers of elements of reductive Lie algebras,
{\it  Doc. Math.}, {\bf 15}\,(2010), 387--421.

\bibitem{cr}       {\sc R.\,Cushman} and {\sc M.\,Roberts}, 
Poisson structures transverse to coadjoint orbits, 
{\it Bull. Sci. Math.} {\bf 126}\,(2002), no.\,7, 525--534.

\bibitem{dixm}      {\sc J.\,Dixmier}.
Polarisations dans les alg\`ebres de Lie semi-simples complexes,
{\it Bull. Sci. Math.}, {\bf 99}\,(1975), no.\,1, 45--63.

\bibitem{deG-AG}       {\sc W.\,de Graaf} and {\sc A.G.\,Elashvili}.
Induced nilpotent orbits of the simple Lie algebras of exceptional type,
{\it Georgian Math. J.}, {\bf 16}\,(2009), no.\,2, 257--278.

\bibitem{ag85}           {\sc A.G.\,Elashvili}.
Sheets of the simple Lie algebras of exceptional type, in 
"{\it Issledovaniya po algebre}'' (=\,{\it Studies in algebra}), p.171--194. Tbilisi Univ. Press, 1985 (Russian).

\bibitem{GS}             {\sc M.I.\,Gekhtman} and {\sc A.\,Stolin}.
Orbits of the coadjoint representation and Yang-Baxter equation. 
Y.\,Fong et al.\,(eds.), ``{\it First international Tainan--Moscow algebra workshop}'', Berlin: de Gruyter. 207--223 (1996).

\bibitem{jos}               {\sc A.\,Joseph}.
On semi-invariants and index for biparabolic (seaweed) algebras I, 
{\it J. Algebra}, {\bf 305}\,(2006), 487--515.

\bibitem{K51}              {\sc F.I.\,Karpelevich}. 
On nonsemisimple maximal subalgebras of semisimple Lie algebras, 
{\it Doklady Akad. Nauk SSSR},
{\bf 76}\,(1951), no.\,6, 775--778 (Russian).

\bibitem{kats82}          {\sc P.\,Katsylo}.
Sections of sheets in a reductive algebraic Lie algebra (Russian). 
{\it Izv. Akad. Nauk SSSR Ser. Mat.} {\bf 46}\,(1982), no.\,3, 477--486. 
(English translation: {\it Math. USSR-Izv.} {\bf 20}\,(1982), no.\,3, 449--458.)

\bibitem{kemp}          {\sc G.\,Kempken}. 
Induced conjugacy classes in classical Lie-algebras,
{\it Abh. Math. Sem. Univ. Hamburg}, {\bf 53}\,(1983), 53--83.

\bibitem{ko63}     {\sc B.\,Kostant}. 
Lie group representations on polynomial rings, 
{\it  Amer. J. Math.}, {\bf 85}\,(1963), 327--404.

\bibitem{kr84}    {\sc H.\,Kraft}. 
``{\it Geometrische Methoden in der Invariantentheorie\/}'', Aspekte der Mathematik {\bf D1}, 
Braunschweig: Vieweg \& Sohn, 1984.

\bibitem{oh}      {\sc Y.-G.\,Oh}.
Some remarks on the transverse Poisson structures of coadjoint orbits,
{\it Lett. Math. Phys.}, {\bf 12}\,(1986), no.\,2, 87--91. 

\bibitem{tg97}      {\sc D.\,Panyushev}. 
Cones of highest weight vectors, weight polytopes, and Lusztig's $q$-analog, 
{\it Transformation Groups}, {\bf 2}\,(1997), 91--115.

\bibitem{nilp-tori}  {\sc D.\,Panyushev}. 
Actions of ``nilpotent tori'' on $G$-varieties,
{\it Indag. Math.}, {\bf 10}(4)~(1999), 565--579.

\bibitem{aif99}      {\sc D.\,Panyushev}. 
On spherical nilpotent orbits and beyond, 
{\it  Ann. Inst. Fourier}, {\bf 49}\,(1999), 1453--1476.

\bibitem{disser}     {\sc D.\,Panyushev}. 
Complexity and rank of actions in invariant theory, 
{\it  J. Math. Sci.} (New York) {\bf 95}\,(1999), 1925--1985.

\bibitem{mmj}         {\sc D.\,Panyushev}.
 Inductive formulas for the index of seaweed Lie algebras,
{\it Moscow Math. J.}, {\bf 1}\,(2001), 221--241.

\bibitem{p03}         {\sc D.\,Panyushev}.
The index of a Lie algebra, the centraliser of a nilpotent element, and the normaliser of the centraliser, 
{\it Math. Proc. Camb. Phil. Soc.}, {\bf 134}, Part\,1 (2003), 41--59.

\bibitem{py06}       {\sc D.\,Panyushev} and {\sc O.\,Yakimova}.
The index of representations associated with stabilisers, 
{\it J. Algebra}, {\bf 302}\,(2006), 280--304.

\bibitem{bn}           {\sc D.\,Panyushev} and {\sc O.\,Yakimova}. 
Compatible Poisson brackets associated with splittings and Poisson commutative 
subalgebras of $\gS(\g)$, 
{\it J. London Math. Soc.}, {\bf 103}\,(2021), no.\,4, 1577--1595.  

\bibitem{s05}            {\sc H.\,Sabourin}.  
Sur la structure transverse \`{a} une orbite nilpotente adjointe,
{\it Canad. J. Math.}, {\bf 57}\,(2005), no.\,4, 750--770.

\bibitem{slod}       {\sc P.\,Slodowy}.
{\it "Simple singularities and simple algebraic groups"},
(Lecture Notes in Math.  {\bf 815}),  Berlin: Springer, 1980.

\bibitem{spalt}      {\sc N.\,Spaltenstein}. 
{\it ``Classes unipotentes et sous-groupes de Borel"}
(Lecture Notes in Math. {\bf 946}),  Berlin 
Heidelberg New York: Springer 1982.

\bibitem{ty}           {\sc P.\,Tauvel} and {\sc R.W.T.\,Yu}.
Sur l'indice de certaines alg\`ebres de Lie,
{\it Ann. Inst. Fourier} (Grenoble) {\bf 54}\,(2004), no.\,6, 1793--1810.

\bibitem{T}              {\sc D.A.\,Timashev}. 
Index of In\"on\"u--Wigner contractions of semisimple Lie algebras,
{\it Russ. J. Math. Phys.}, {\bf 32}\,(2025), 189--195. 

\bibitem{vi00}        {\sc E.B.\,Vinberg}.
On stability of actions of reductive algebraic groups,
Y.\,Fong et al. (eds.), ``{\it Lie algebras, rings and related topics}''. Papers of the 2nd Tainan-Moscow international algebra workshop '97, Tainan, Taiwan, 1997. Hong Kong: Springer,  188--202 (2000).

\bibitem{vp}          {\sc E.B.\,Vinberg} and {\sc V.L.\,Popov}. 
{\it Invariant theory}, in: Sovremennye problemy matematiki. Fundam. napravl., t.\,{\bf 55}, p.\,137--309, 
Moskva:~VINITI, 1989 (Russian). English translation in: {\it  Algebraic Geometry IV} (Encyclopaedia Math. 
Sci., vol.~55, p.123--284) Berlin Heidelberg New York: Springer 1994.

\bibitem{yu08}       {\sc R.W.T.~Yu}.
On the sum of the index of a parabolic subalgebra and of its nilpotent radical,
{\it Proc. Amer. Math. Soc.}, {\bf 136}\,(2008), no.\,5, 1515--1522.

\end{thebibliography}
\end{document}